\newcommand\N{{\mathbb N}}
\newcommand\R{{\mathbb R}}
\def\CC{{\mathcal C}}
\def\JJ{{\mathcal J}}
\def\LL{{\mathcal L}}
\definecolor{lightblue}{rgb}{0.30,0.30,1}	
\renewcommand{\L}{\mathbb{L}}
\renewcommand{\P}{\mathbb{P}}
\newcommand{\eps}{{\varepsilon}}
\newtheorem{theo}{Theorem}
\newtheorem{prop}[theo]{Proposition}
\newtheorem{lem}[theo]{Lemma}
\newtheorem{rem}[theo]{Remark}
\newcommand{\cris}[1]{\textcolor{blue}{#1}}
\newcommand{\john}[1]{{\color{red}{#1}}}
\newcommand{\johnNew}[1]{{\color{cyan}{#1}}}
\newcommand{\revision}[1]{#1}
\renewcommand{\cris}[1]{\revision{#1}}
\renewcommand{\john}[1]{\revision{#1}}
\renewcommand{\johnNew}[1]{\revision{#1}}
\renewcommand{\sout}[1]{}
\title{Clamping and Synchronization in the strongly coupled FitzHugh-Nagumo model}
\author{Cristobal Qui\~{n}inao\footnotemark[1] \and Jonathan D. Touboul\footnotemark[3]}
\renewcommand{\thefootnote}{\fnsymbol{footnote}}
\begin{document}
\maketitle

\renewcommand{\thefootnote}{\arabic{footnote}}
\begin{abstract}
We investigate the dynamics of a limit of interacting FitzHugh-Nagumo neurons in the regime of large interaction coefficients. We consider the dynamics described by a mean-field model given by a nonlinear evolution partial differential equation representing the probability distribution of one given neuron in a large network. The case of weak connectivity previously studied displays a unique, exponentially stable, stationary solution. Here, we consider the case of strong connectivities, and exhibit the presence of possibly non-unique stationary behaviors or non-stationary behaviors. To this end, using Hopf-Cole transformation, we demonstrate that the solutions exponentially concentrate around a singular Dirac measure as the connectivity parameter diverges, centered at the zeros of a time-dependent continuous function. We next characterize the points at which this measure concentrates, and exhibit a particular solution corresponding to a \john{Dirac measure concentrated on a time-dependent point satisfying} an ordinary differential equation identical to the original FitzHugh-Nagumo system. This solution may thus feature multiple stable fixed points or periodic orbits, respectively corresponding to a clumping of the whole system at rest, or a synchronization of cells on a periodic solution. We illustrate these results with numerical simulations of neural networks with a relatively modest number of neurons and finite coupling strength, and show that away from the bifurcations of the limit system, the asymptotic equation recovers the main properties of more realistic networks. 

\end{abstract}
\begin{keywords}
FitzHugh-Nagumo neurons, Mean-Field equations, Large coupling, Synchronization, Concentration.
\end{keywords}

\begin{AMS}
35Q92, 
35Q70, 
35Q82, 
37N25, 
92C20, 
\end{AMS}
\newpage

\section{Introduction}
The study of the large-scale dynamics of neural networks is currently a great endeavor in computational neuroscience. In line with these studies, we investigate here the dynamics of a large-scale network coupled with electrical synapses, in the regime where the conductance of those synapses (coupling coefficient) is large. Electrical connections between neurons are one of the two main modalities by which neurons communicate. Contrasting with the widespread chemical synapses that transmit large, stereotyped action potentials (or spikes) fired when the neuron's electrical potential is sufficiently depolarized, electrical synapses (or gap-junctions) communicate smaller variations of neuron's voltage by direct ionic exchanges through the neuronal membranes. 

\cris{From the neuroscience point of view, the question of the dynamics of large networks with strong electrical coupling is relevant to test a hypothesis proposed in the biological and computational neuroscience literature, according to which an enhanced transmission of currents through gap junctions could support the emergence of synchronized activity~\cite{bennett2004electrical,hjorth2009gap}. Oscillations constitute a highly significant pattern of brain activity, reportedly related to various cognitive processes including as memory, attention and sleep~\cite{wang2010neurophysiological}, and abnormal synchrony is observed in various pathologies, notably epilepsy and Parkinson's disease~\cite{buzsaki2004neuronal}. Despite this relevance in applications, the theoretical understanding of noisy networks of excitable cells with gap junctions is still limited. In that domain, remarkable works have addressed weak noise and/or weak coupling~\cite{pfeuty2005combined} regimes, or of non-excitable neuron models~\cite{ostojic2009synchronization}. A notable exception is the very recent preprint of Lu\c{c}on and Poquet~\cite{lucon2018} studying, using methods from multiple timescales dynamics and hyperbolic invariant manifolds, oscillatory behaviors in related system. In all of these studies the regime of large coupling has been largely overlooked. The present study will introduce methods of the analysis of Partial Differential Equations (PDEs)  to the domain of computational neurosciences, allowing characterizing the dynamics of large networks in the large coupling limit, and the very specific dynamics associated. }

We focus on a simplified model of spiking neural network with electrical coupling, the FitzHugh-Nagumo (FhN)~\cite{fitzhugh1955,nagumo1962active} system. This central model in computational neuroscience captures, in a simplified framework, many important properties of more complex models, particularly of the canonical Hodgkin-Huxley equation~\cite{hodgkin-huxley:52}. The FhN model describes the dynamics of the voltage variable $v$ of a cell coupled to a recovery variable $x$ accounting for a variety of outwards currents
through the equations:
\[\begin{cases}
\dot v = -v(v-\lambda)(v-1) - x + I\\
\dot x = -a x + bv,
\end{cases}\]
where $\lambda$ controls the level of excitability of the cell, $I$ accounts for inwards currents, and the parameters $a>0$ and $b\geq 0$ describe the kinetics (timescale and voltage-activation) of the recovery variable. Within a network composed of $n$ FitzHugh-Nagumo neurons with voltage and adaptation $(v^{i},x^{i})_{i=1\cdots n}$, the current received by each neuron is composed of an extrinsic part, assumed noisy, and the sum of currents received others cells in the network. In the simplest model of a fully connected network with electrical synapses, the deterministic part of the current received by neuron $i$ is given by
\[I_{\john{\sout{gap}}}=\frac {\john{J}} n \sum_{j=1}^{n} (v^{j}-v^{i})+I_{ext},\]
\john{where $J$ is the conductance of the electrical synapse and $I_{ext}$ is the deterministic part of the external input}. We thus obtain the stochastic network equation:
\cris{\begin{equation}\label{eq:network}\begin{cases}
\displaystyle{dv^{i}_{t} = \Big(-v^{i}_{t}(v^i_t-\lambda)(v^i_t-1) +I_{ext} - x^{i}_{t} + \frac {\john{J}} n \sum_{j=1}^{n} (v^{j}_t-v^{i}_t) \Big)\,dt + \sigma dW^{i}_{t}}\\
dx^{i}_{t} = \left(-a x^{i}_{t} + bv^{i}_{t}\right)\,dt,
\end{cases}\end{equation} 
where $(W^{i}_{t})$ is a collection of $n$ independent Brownian motions accounting for stochastic fluctuations of the currents. A number of studies have discussed the dynamics of this model in distinct regimes. In particular, Zaks and collaborators studied the dynamics of this system and associated stochastic resonances in depth in the small noise regime~\cite{zaks2003noise,zaks2005noise}, and the large network size limit derived in~\cite{baladron2011mean,BFT:15,mischler2016kinetic} using probabilistic or functional analysis methods. Altogether, these results established that the system satisfies the propagation of chaos property, in the sense that any finite set of neuron converge towards independent realizations of the same process, whose law \cris{$\rho$ satisfies the nonlinear McKean-Vlasov Partial Differential Equation (PDE):
\begin{equation}\label{eq:MeanField}
\partial_t \rho = \partial_x\big((ax-bv)\rho\big)+\partial_v\left(\big(N(v)+x+\john{J}(v-\cris{\JJ[\rho]}))\big)\rho\right)+\frac{\sigma^2}{2}\partial^2_{vv}\rho
\end{equation}
with $N(v)=v(v-\lambda)(v-1)+I_{ext}$}, and \john{$\mathcal{J}$ the functional acting on functions $\{f \in \mathbbm{L}^{1}(\R\times\R)\; :\;\int_{\R^{2}}\vert v\vert f(v,x)<\infty,\, f(v,x)\geq0\}$ such that} \cris{$$\JJ[f] =\JJ[f(\cdot,\cdot)]= \int_{\R^{2}} v\,f(x,v)dvdx.$$} \john{While we focus here on the particular form of nonlinearity $N$ of the FhN model, our results essentially exploit its smoothness and cubic decay at infinity, and it shall be possible to extend our results to other choices of intrinsic dynamics $N$ satisfying those constraints. Moreover, in the sequel, we assume that $\sigma=\sqrt2$; all results are valid for any $\sigma>0$.}}

In~\cite{mischler2016kinetic}, we studied well-posedness of this equation and showed that there exists a unique solution provided that initial conditions satisfy a few regularity and decay at infinity assumptions (see Theorem~\ref{th:mischler} below). Moreover, we showed that when the connectivity is weak enough ($\john{J}\approx 0$), there exists a unique stationary solution which is exponentially nonlinearly stable. Heuristically, in the low coupling limit, neurons have a behavior similar to uncoupled neurons and, as a whole, the system distributes on the associated stationary distribution. The proof explicitly relies on the properties of the uncoupled system, together with a fine analysis of the spectrum in the weakly connected regime to assess persistence of stationary solutions and their stability. In the weak coupling limit, neurons are asynchronous. In sharp contrast, the regime of large coupling that we shall study here will yield highly synchronized or clamped dynamics.

To study the role of large connectivity in the dynamics of neuronal networks, we will thus analyze here the behavior of the solutions to the nonlinear FhN system in the large $n$ limit and when the coupling coefficient $J$ is large. In that case, the nonlinear term in the PDE~\eqref{eq:MeanField} becomes prominent, and methods relying on comparisons with the uncoupled linear case are no longer efficient. Mathematically, we focus on the limit $\john{J}\rightarrow+\infty$, a grotesque regime from the biological viewpoint (since currents transmitted by gap junctions are bounded), which allows a detailed mathematical analysis of the role of increased electrical coupling in the synchronization of neurons. 

The paper is organized as follows. In section~\ref{sec:Model}, we introduce in detail the model studied, review the relevant literature, and summarize the main results of this manuscript. These results are proven in the following sections. In section~\ref{sec:full}, we study the properties of the solution to the limit equation in the general case, and \cris{establish} \emph{a priori} bounds on the probability density useful for our developments. \cris{In particular, a uniform upper-bounded and a tightness result are proved, ensuring that, for coupling sufficiently large, the distributions of voltage and adaptation concentrate on a compact set independent of the coupling strength}. In section~\ref{sec:preliminary}, \john{we study a sequence of approximations of the solutions obtained by replacing the cubic locally-Lipschitz drift by a globally Lipschitz map identical to the original drift on a compact set, and obtain for this sequence a number of regularity results using} maximum/comparison principle for elliptic equations. In section~\ref{sec:rel}, \john{we exploit the properties of the sequence of approximations to demonstrate} the main results of the paper regarding convergence and concentration of the solutions in the large coupling limit. These results do not provide explicitly the limit towards which the system converges. We thus address the identification of a limit of the system in section~\ref{sec:indentification}, and confirm the accuracy of this limit for finite-sized networks with bounded connectivity in section~\ref{sec:numerics} where numerical simulations exhibit the \cris{existence of multiple} stable solutions and periodic solutions consistent with the theoretical results and with the particular solution exhibited.

\section{The mean-field FitzHugh-Nagumo model}

\label{sec:Model}
We introduce here in detail the model studied throughout the manuscript and summarize the main results to be proved in the following sections. 

\subsection{Setting, Model and Definitions}
\john{The central equation~\eqref{eq:MeanField} analyzed in this manuscript was studied in~\cite{mischler2016kinetic}, and existence and uniqueness of solutions was proved under specific conditions on the initial condition summarized below.
\begin{theorem}[see~\cite{mischler2016kinetic}, Theorem 2.2] 
\label{th:mischler}
For any initial condition $\rho(0,\cdot,\cdot)$ a probability distribution on $(v,x)\in \R^{2}$ with bounded second moment and finite entropy, $\rho(0,\cdot,\cdot) \in \L^{1}(1+x^{2}+v^{2})\cap \L^{1}\log \L^{1} \cap \P(\R^{2})$, there exists a unique weak solution to~\eqref{eq:MeanField}, that is uniformly in time bounded in $\L^{1}(1+x^{2}+v^{2})$. If moreover $\rho(0,\cdot,\cdot)\in \L^{1}(e^{\kappa (x^{2}+v^{2})})$ for some $\kappa>0$, then the solution remains in this space for all times\footnote{\john{Here we used the following classical notations for functional spaces: $\L^{1}(\omega(x,v))$ denotes the weighted $\L^{1}$ space of functions, i.e. functions $f$ such that $\int \omega(v,x)\vert f(v,x)\vert\,dvdx<\infty$; $\L^{1}\log \L^{1}$ is the space of functions with finite entropy $\{f\in \L^{1}(\R^{2}) \text{such that} f\geq 0 \; a.e.\; \text{ and } \int_{\R^{2}} f \log(f)<\infty\}$, and $\P(\R^{2})$ is the space of probability distribution functions $\{f\in \L^{1}(\R^{2}) \text{ such that } f\geq 0 \; a.e.\; \text{ and } \int_{\R^{2}} f(v,x)=1\}$.}}.
\end{theorem}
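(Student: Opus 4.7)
My plan is to apply the standard nonlinear-Fokker--Planck fixed-point scheme based on the self-consistent current $j(t):=\JJ[\rho(t)]$. For arbitrary continuous $j:[0,T]\to\R$ of controlled growth, I would first solve the linearized problem
\[\partial_t \rho = \partial_x\big((ax-bv)\rho\big) + \partial_v\big([N(v)+x+J(v-j(t))]\rho\big) + \partial^2_{vv}\rho,\]
which is well-posed by classical parabolic theory: the diffusion is degenerate (acting only on $v$), but the commutator $[\partial_v,(ax-bv)\partial_x]=-b\,\partial_x$ provides Hörmander hypoellipticity in $(v,x)$ since $b>0$, producing a smooth, non-negative, mass-preserving solution. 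The central task is then to find a fixed point of the map $j\mapsto \JJ[\rho[j]]$.

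The core of the argument is the derivation of a priori estimates uniform in time. Testing the equation against $1+v^2+x^2$ and using the dissipative quartic $-2v\,N(v)\sim -2v^4$ to absorb the recovery term and, by Young's inequality, the coupling $-2Jv(v-j)$ (whose quadratic-in-$v$ growth is dominated by the quartic for large $|v|$) yields
\[\frac{d}{dt}\int(1+v^2+x^2)\rho \;\leq\; C - c\int v^4\,\rho.\]
A parallel computation on $\int\rho\log\rho$, using the $\partial^2_{vv}$ term to produce entropy dissipation, gives a uniform entropy bound controlled by the moment estimate. These bounds ensure that $j\mapsto\JJ[\rho[j]]$ sends a sufficiently large compact convex set of $C([0,T];\R)$ into itself, with continuity following from stability of the linear problem; Schauder's theorem then yields existence. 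Uniqueness proceeds by writing the equation for the difference $w=\rho_1-\rho_2$ of two solutions with the same initial datum: the nonlinear term produces a source linear in $w$ with coefficients controlled by the already-bounded first moments, so a Grönwall argument in a weighted $\L^1$ or Wasserstein-1 distance closes the estimate.

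The exponential-moment propagation is the most delicate step, and I expect it to be the main obstacle. Testing against $\varphi_\kappa(v,x):=e^{\kappa(v^2+x^2)}$ produces, besides the transport contributions, a diffusive term proportional to $\kappa^2 v^2\,\varphi_\kappa$ coming from $\partial^2_{vv}$, leading to a schematic inequality
\[\frac{d}{dt}\int\varphi_\kappa\rho \;\leq\; \int\big[C_1(1+v^2+x^2)+C_2 J v^2 - 2\kappa v\,N(v) + 4\kappa^2 v^2\big]\varphi_\kappa\rho.\]
Since $v\,N(v)\sim v^4$, the bracket is negative for $(v,x)$ outside a ball of radius $R(\kappa,J)$ and bounded inside it, so a Grönwall argument closes provided $\kappa$ is chosen small enough that the $\kappa v^4$ dissipation dominates the $\kappa^2 v^2$ diffusive contribution. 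The real difficulty lies in tracking the dependence on $J$: the constants must be chosen so that the confining $-v^4$ eventually beats the quadratic coupling $-Jv^2$ for $|v|\gtrsim\sqrt{J}$, with the remaining bounded region absorbed into the Grönwall constant—this tension between large coupling and cubic confinement is the recurring theme ensuring the whole analysis proceeds uniformly in the regime of interest.
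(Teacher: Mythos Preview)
The paper does not contain a proof of this statement: Theorem~\ref{th:mischler} is quoted verbatim from~\cite{mischler2016kinetic} (as indicated in its heading, ``see~\cite{mischler2016kinetic}, Theorem~2.2'') and is invoked as background, not established here. There is therefore no ``paper's own proof'' to compare your proposal against.

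That said, your sketch is broadly in the spirit of how such McKean--Vlasov well-posedness results are obtained, and is consistent with what the present paper says about the argument in~\cite{mischler2016kinetic}: a priori bounds in weighted $\L^1$ drive existence and uniqueness, with the cubic confinement $-vN(v)\sim -v^4$ absorbing the lower-order and coupling terms. Two small cautions. First, your hypoellipticity step assumes $b>0$ to generate the $\partial_x$ direction via the commutator; the paper allows $b\geq 0$, so the $b=0$ case would need separate (trivial) treatment since $x$ then decouples. Second, and more substantively, the paper explicitly remarks that the proof in~\cite{mischler2016kinetic} does \emph{not} proceed via standard parabolic norms but rather uses ``a specific norm including cross derivative contribution'' to compensate for the missing $\partial_{xx}^2$; your outline glosses over this, treating the linear problem as ``well-posed by classical parabolic theory'' after invoking H\"ormander. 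The actual derivation of the needed a priori estimates in the degenerate setting is the technical heart of~\cite{mischler2016kinetic}, and your proposal underestimates that difficulty.
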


In the present manuscript, we rely on this result and concentrate our attention on the behavior of the solutions to this equation as the coupling parameter $\john{J}$  diverges, or, as we note in the sequel, $\eps=J^{-1}$ goes to $0$. Moreover, to avoid difficulties associated with the hypo-elliptic degeneracy of the diffusion term already handled in~\cite{mischler2016kinetic} (noiseless adaptation in the network equations), we study a slightly modified version of equation~\eqref{eq:MeanField} incorporating a vanishing diffusion on the adaptation variable with diffusion coefficient $\sqrt{2\eps}$. Convergence results proved herein are not specific to this choice of diffusion, and may be easily generalized to other vanishing diffusion coefficients, and even possibly to the hypoelliptic case (absence of diffusion). However, the presence of this small diffusion allows us to concentrate on the main difficulties associated with diverging connectivity, and while the specific square-root scaling of the diffusion coefficient shall play no role in the existence of a limit, it controls the speed of convergence, and our choice was driven by the fact that a square-root decay introduces no additional scaling in the convergence (see section~\ref{sec:indentification}).}

 \cris{We emphasize that the results of theorem~\ref{th:mischler} can be easily extended to cases with a diffusion in the adaptation variable. The interested reader will notice that the presence of a diffusion only simplifies the derivations and results of existence and uniqueness of solutions apply, \emph{mutatis mutandis}, to the present case. Indeed, the proof of Theorem~\ref{th:mischler} relies on a priori bounds that, as a consequence of the lack of second $x$ derivative, use a specific norm including cross derivative contribution. In this norm, an extra second derivative term in $x$ associated with a diffusion on the adaptation variable will only add an additional negative contribution, and thus all upper-bounds derived for the degenerate case persist; one should therefore obtain as consequences existence, uniqueness and regularity of the solutions.}

Therefore, for $\eps>0$, we are concerned with the behavior of $g_\eps(t,x,v)$, solutions to the equation
\begin{multline}
\label{eq:evo1}
 \partial_t g_\eps(t,x,v)=\partial_x\big((ax-bv)g_\eps(t,x,v)+\eps\partial_x g_\eps(t,x,v)\big)
 \\+\partial_v\left(\big(N(v)+x+\eps^{-1}(v-\cris{\JJ[g_\eps]})\big)g_\eps(t,x,v)+\partial_vg_\eps(t,x,v)\right),
\end{multline}
coupled with \john{the first moment of $g_\eps$, which is} \cris{the time-dependent function}
\begin{equation}
\label{eq:It}
\cris{\JJ[g_\eps]}(t)=\cris{\JJ[g_\eps(t,\cdot,\cdot)]}=\int_{\R}\int_\R vg_\eps(t,x,v)dxdv,
\end{equation}
accounting for the network-generated average current received by the neurons. 

\john{Throughout the paper, we will consider an initial condition on the probability distribution that satisfies the following assumption, labeled (H) in the sequel:

\medskip
\textbf{(H)} The initial conditions $g_\eps(0,x,v)=g_\eps^0(x,v)$ are a sequence of regular probability measures (at least $C^3(\R^2)$) with a uniform upper bound for the derivatives, and moreover, there exist two positive constants $A$ and $B$, with $A\leq \min(a,1)$, such that:
 $$ 
 \sup_{0<\eps<1}\psi_\eps(0,x,v)=\sup_{0<\eps<1}\eps\log g_\eps^0(x,v)\leq-\frac{A}2(v^2+x^2)+B.
 $$
 
\begin{rem}
 Notice that hypothesis (H) is consistent with the hypotheses of Theorem~\ref{th:mischler}. An initial condition $g_\eps^0$ satisfying that assumption is indeed in $\P(\R^2)$ and has the upper-bound
 $$
  g_\eps^0(x,v) = e^{\psi_\eps(0,x,v)/\eps}\leq C e^{-A(x^2+v^2)/2\eps},
 $$
ensuring that for each $\eps$, we have that $g_\eps^0(x,v)\in \L^{1}(1+x^{2}+v^{2})\cap \L^{1}\log \L^{1} \cap \P(\R^{2})$. Moreover, note that the initial condition is infinitely differentiable in the weak sense, owing to its integrability in a weighted $\L^1$ space with exponential weights. 
\end{rem}

}

Equation~\eqref{eq:evo1} is in divergence form, implying in particular that the positivity principle holds true and that the mass is conserved; for an initial condition given by a \john{probability distribution (a }\cris{nonnegative solution with unit mass}\john{)}, $g_\eps$ thus remains for all times a well-defined \john{probability distribution.} The limit $\eps\rightarrow0$ corresponds to a strong connectivity regime in a FitzHugh-Nagumo simplified equation. To understand formally the behavior of the family $\{g_\eps\}_\eps$, we rewrite equation~\eqref{eq:evo1} as
\begin{multline}
\nonumber
 \frac{\partial_tg_\eps(t,x,v)}{g_\eps(t,x,v)}=\left( a+N'(v)+\eps^{-1}\right)+(ax-bv)\frac{\partial_xg_\eps(t,x,v)}{g_\eps(t,x,v)}
 \\
 +\left(  N(v)+x+\eps^{-1}(v-\cris{\JJ[g_\eps]})\right)\frac{\partial_vg_\eps(t,x,v)}{g_\eps(t,x,v)}  
 +\eps\frac{\partial^2_{xx} g_\eps(t,x,v)}{g_\eps(t,x,v)}
 +\frac{\partial^2_{vv} g_\eps(t,x,v)}{g_\eps(t,x,v)}.
\end{multline}
Notice that for $\eps$ small, the time derivative on the left hand side becomes negligible compared to the terms on the order of $\eps^{-1}$. To rigorously handle this divergence, we consider the Hopf-Cole transformation $\psi_\eps = \eps \log g_\eps$. The map $\psi_{\eps}$ satisfies the equation:
\begin{multline}
\label{eq:hct}
\partial_t\psi_\eps
=
\left(\eps a+\eps N'(v)+1\right)
+
(ax-bv)\partial_x\psi_\eps
\\
+
\left( N(v)+x+\eps^{-1}(v-\cris{\JJ[g_\eps]})\right)\partial_v\psi_\eps
\\
+
|\partial_x\psi_\eps|^2+\eps\partial^2_{xx}\psi_\eps
+
\eps^{-1}|\partial_v\psi_\eps|^2+\partial^2_{vv}\psi_\eps,
\end{multline}
and identifying the terms with the same order of magnitude shall characterize the limit solution $\psi=\lim_{\eps\to 0} \psi_{\eps}$. This limit has several important properties: first, it is non-positive ($\psi(t,x,v)\leq 0$ for all $(t,x,v)$) \cris{and we will show that, as a consequence of the mass conservation property, it reaches $0$. Moreover,} the support of $g_{\eps}$ is given by the zeros of $\psi$ when $\eps\to 0$. The paper focuses on characterizing the limit $\psi$. This problem raises challenging questions, especially due to the two-dimensional nature of the equation and to the presence of multiple solutions. Indeed, considering the diverging terms of order $\eps^{-1}$ in equation~\eqref{eq:hct} elucidates the possible dependence of $\psi$ in $v$:
\begin{equation}
\label{eq:limit1}
 0=(v-\cris{\alpha(t)})\partial_v\psi(t,x,v)+|\partial_v\psi(t,x,v)|^2,
\end{equation}
\cris{for $\alpha(t)$ the limit of $\JJ[g_\eps(t\cdot,\cdot)]$ in a sense to be \john{specified below}.} Solutions of this equation include functions independent of $v$, as well as a non-trivial quadratic solution\footnote{Continuous combinations of solutions independent of $v$ and the quadratic map are also solutions of the equation in the weak sense.} (see section~\ref{sec:indentification}):
\[\psi(t,x,v)=-\frac12(v-\cris{\alpha(t)})^2+\phi(x,t),\]
where $\cris{\alpha(t)}$ is the first coordinate of the solution of the FitzHugh -Nagumo equation:
\cris{\begin{equation}\label{eq:FhNLim}
\begin{cases}
\frac{d\alpha}{dt} & = -N(\alpha(t)) -\beta(t)\\
\frac{d\beta}{dt} & = a \beta(t) -b \alpha(t)\\
\end{cases}
\end{equation}
with initial condition given by the average value of $v$ and $x$ of the initial distribution. For that solution, the condition $\psi(t,x,v) \leq 0$ implies that $\phi(x,t)\leq 0$, and we can express explicitly one particular solution by considering the equation associated with the terms of order $1$ in equation~\eqref{eq:hct}:
\begin{equation}\label{eq:psiSol}
\psi(t,x,v)=-\frac12(v-\alpha(t))^2-\frac a 2 (x-\beta(t))^2
\end{equation}
where $\beta$ is actually the second coordinate of equation~\eqref{eq:FhNLim}. Extensive numerical simulations provided in section~\ref{sec:numerics} confirm that the network equation converges towards this particular solution of the system. }

\cris{As mentioned above, the set where $\psi$ reaches zero is particularly relevant: indeed, for $\eps$ small, the support of the distributions $g_\eps(t,\cdot,\cdot)$ concentrate exponentially fast on this set. This observation implies that the support of the solution in the voltage variable for that particular solution shrinks to a \john{single point} $v=\alpha(t)$ and $x=\beta(t)$}, allowing to show that the system may be clamped at a given point when the solution of the equation~\eqref{eq:FhNLim} has a stable fixed point, that multiple clamped stationary solutions exist, or even synchronized periodic solutions, depending on the nonlinearity $N$ and the parameters $(a,b)$.

Following the ideas of~\cite{barles2009concentration,leman2014influence}, we shall prove most of these results rigorously. In the present case, the \john{fact that} the drift $N(v)$ \john{is not globally Lipschitz-continuous}, the nonlinearity of the problem and the presence of the diverging interaction term $\eps^{-1}$ leads to difficulties that need careful consideration.

Because we expect a concentration of the distribution on a compact set as the coupling increases, the non-global Lipschitz continuity of the drift will not be a critical aspect in the limit $\eps\to 0$. Rigorously, this concentration will allow us to describe the system in the small $\eps$ limit through a simpler model in which (i) $N(v)$ is replaced by a smooth truncated function $N_M(v)$ identical to $N(v)$ on a compact interval and having a ``linear growth'' at infinity, and (ii) a linear interaction term. In detail, for $M>0$ fixed, we consider the continuously differentiable \emph{truncated driving function}:
$$
N_M(v)=\begin{cases}
N(-M)+N'(-M)(v+M), & v<-M,
\\
N(v), & -M\leq v\leq M,
\\
N(M)+N'(M)(v-M), & v>M,
\end{cases}
$$
\cris{which is globally Lipschitz-continuous. Now, each function $g_\eps$ will be approximated by a sequence of functions $f_{\eps}^{M}$ satisfying the equation:
\begin{multline}
\label{eq:truncated}
 \partial_t f_\eps^M(t,x,v)=\big(a+N_M'(v)+\eps^{-1}\big)f_\eps^M(t,x,v)+(ax-bv)\partial_xf_\eps^M(t,x,v)
 \\
 +\big(N_M(v)+x+\eps^{-1}(v-\cris{\JJ[g_\eps]})\big)\partial_vf_\eps^M(t,x,v)
 \\
 +
  \eps\partial^2_{xx} f_\eps^M(t,x,v)
 +\partial^2_{vv} f_\eps^M(t,x,v).
\end{multline}
where $\cris{\JJ[g_\eps]}$ is no longer implicitly defined (contrasting with equation~\eqref{eq:evo1}), but is the first moment in $v$ of $g_\eps$, which will be shown to exist and satisfy proper regularity conditions. For this new equation $\JJ[g_\eps]$ can be interpreted as an \emph{external current}, so that the sequence of approximations constructed are solutions of a \emph{linear} equation~\eqref{eq:truncated}, and we will exploit this linearity to characterize a number of useful properties for our purposes}. \johnNew{Moreover, since the last equation is linear, has globally Lipschitz coefficients, and has a non-degenerate diffusion term, classical results ensure weak existence and uniqueness of solutions, in particular when the initial conditions satisfy assumption (H), in the same sense as in Theorem~\ref{th:mischler}.}

\subsection{Summary of the main results}
Now that the setting and equations have been posed, we summarize below the main mathematical results of this paper. The first result characterizes in detail the solutions to the truncated mean-field FitzHugh-Nagumo system and the convergence result for large coupling. 

\begin{theo}
\label{theo:main}
\john{Let $f^M_\eps(t,x,v)$ be the solution to the truncated equation~\eqref{eq:truncated} with initial conditions $f_\eps^{M,0}$ satisfying assumption (H) uniformly in $M$, i.e., such that there exists $A>0$ and $B>0$, independent of $M$, with $A\leq \min (a,1)$, such that
  $$
\sup_{0<\eps<1}\eps\log f_\eps^{M,0}(x,v)\leq -\frac{A}2(v^2+x^2)+B.
  $$
Then the family of functions $\varphi^M_\eps(t,x,v)=\eps\log f^M_\eps(t,x,v)$ are well defined. Moreover,
  \begin{enumerate} 
	  \item for each $M$, the sequence of functions $(\varphi^{M}_\eps)_{\eps\in (0,1)}$ \johnNew{is relatively compact, thus} converges locally uniformly \johnNew{on subsequences, as $\eps\to 0$,} to a function $\varphi^M\in C\big((0,+\infty)\times\R^2\big)$, a viscosity solution to the following equation:
 \begin{equation}
\label{eq:viscosity}
 0=(v-\john{\alpha}(t))\partial_v\varphi^M+|\partial_v\varphi^M|^2,\qquad \max_{(x,v)\in\R^2}\varphi^M(t,x,v)=0,
\end{equation}
with $\john{\alpha}(t)$ \johnNew{an adherent point of the sequence $(\JJ[g_\eps] (t))_{\eps\in (0,1)}$}. 
\item Denoting by $f^M$ the weak limit of $f^{M}_\eps$ as $\eps$ vanishes, we have that a.e. in $t$, $\supp f^M(t,\cdot,\cdot)\subset\{\varphi^M(t,\cdot,\cdot)=0\}.$ 
\end{enumerate}}
\end{theo}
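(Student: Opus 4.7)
The plan is to follow the perturbed test function and Hopf-Cole strategy of Barles, Mirrahimi and Perthame (as cited), adapted to the present two-dimensional, nonlocal setting. First I would apply the Hopf-Cole transform $\varphi^M_\eps = \eps\log f^M_\eps$ to the linear truncated equation~\eqref{eq:truncated}. A direct computation, identical in form to the derivation of~\eqref{eq:hct} but with $N$ replaced by the globally Lipschitz $N_M$ and $\JJ[g_\eps]$ treated as a prescribed (known) time-dependent current, gives a Hamilton-Jacobi-Bellman equation for $\varphi^M_\eps$ whose only singular-in-$\eps$ terms are precisely $\eps^{-1}(v-\JJ[g_\eps])\partial_v\varphi^M_\eps + \eps^{-1}|\partial_v\varphi^M_\eps|^2$. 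Multiplying that equation by $\eps$, one sees that all remaining terms are formally bounded as $\eps\to 0$, and the dominant balance is precisely~\eqref{eq:viscosity}.

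To obtain the compactness in part (1), I would proceed in three substeps. First, I would propagate the initial upper bound in (H) by constructing a quadratic supersolution of the form $\bar\varphi(t,x,v) = -\tfrac{A'}{2}(x^2+v^2)+B'(t)$. Evaluating the PDE for $\varphi^M_\eps - \bar\varphi$ at an interior maximum, using that $N_M$ is globally Lipschitz, that $a>0$ and that $\JJ[g_\eps](t)$ is uniformly bounded on bounded time intervals (which follows from the tightness and $\L^1$ moment bounds established in Section~\ref{sec:full}), one verifies that a suitable choice of $A'\leq A$ and a linearly growing $B'(t)$ preserves the barrier for all $\eps\in(0,1)$. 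Second, I would derive uniform equicontinuity of $\varphi^M_\eps$ on compact subsets of $(0,\infty)\times\R^2$; the cleanest route is the method of half-relaxed upper and lower limits $\overline\varphi^M(t,x,v) = \limsup\nolimits^*_{\eps\to 0}\varphi^M_\eps$ and $\underline\varphi^M$, which bypasses the need for pointwise gradient bounds. Third, a diagonal extraction combined with Arzel\`a-Ascoli (or the half-relaxed limit machinery) yields a subsequence converging locally uniformly to a continuous function $\varphi^M$.

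Passing to the viscosity limit is then standard. Along the extracted subsequence, the sequence $\JJ[g_{\eps_k}](t)$ is precompact in, say, $L^\infty_{\mathrm{loc}}$ thanks to the tightness estimates of Section~\ref{sec:full}, and a further extraction provides an adherent point $\alpha(t)$. The stability of viscosity solutions for the $\eps$-scaled equation then yields that $\varphi^M$ is a viscosity solution of $0 = (v-\alpha(t))\partial_v\varphi^M + |\partial_v\varphi^M|^2$. The identity $\max_{(x,v)}\varphi^M(t,\cdot,\cdot)=0$ is a direct consequence of mass conservation $\int f^M_\eps\,dxdv = 1$ combined with the quadratic upper barrier: the barrier forces the mass of $f^M_\eps$ to be concentrated in a compact set independent of $\eps$, so that $\max_{(x,v)}\varphi^M_\eps(t,\cdot,\cdot)$ cannot diverge to $-\infty$, while $\varphi^M\leq 0$ passes to the limit in the barrier.

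For part (2), the conclusion on the support is an immediate consequence of the local uniform convergence. For any compact $K$ contained in $\{\varphi^M(t,\cdot,\cdot)<0\}$, continuity gives a $\delta>0$ with $\varphi^M \leq -2\delta$ on $K$; by local uniform convergence $\varphi^M_\eps \leq -\delta$ on $K$ for $\eps$ small enough, whence $f^M_\eps = \exp(\varphi^M_\eps/\eps) \leq e^{-\delta/\eps}\to 0$ uniformly on $K$, and passing to the limit in the weak formulation yields $f^M\equiv 0$ on $K$. The main obstacle will be the uniform regularity required for the compactness step: the term $\eps^{-1}|\partial_v\varphi^M_\eps|^2$ tends to pin down $\partial_v\varphi^M_\eps$ but competes with the singular drift $\eps^{-1}(v-\JJ[g_\eps])\partial_v\varphi^M_\eps$ on an unbounded domain, so Bernstein-type gradient estimates are delicate; using half-relaxed limits circumvents this, at the cost of requiring a comparison principle for~\eqref{eq:viscosity} to identify $\overline\varphi^M=\underline\varphi^M$, which is treated in the subsequent identification section~\ref{sec:indentification}.
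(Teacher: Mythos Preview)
Your overall architecture matches the paper's, but there is a genuine gap in how you obtain compactness. You propose to replace Bernstein gradient estimates by the Barles--Perthame half-relaxed limits $\overline\varphi^M,\underline\varphi^M$. That machinery does avoid equicontinuity, but it only yields a \emph{continuous} limit with \emph{locally uniform} convergence (which is exactly what the theorem asserts) once a comparison principle for the limiting equation collapses $\overline\varphi^M=\underline\varphi^M$. The limit equation here,
\[
0=(v-\alpha(t))\,\partial_v\varphi^M+|\partial_v\varphi^M|^2,
\]
is severely degenerate (no dependence on $t$- or $x$-derivatives) and, as the paper itself notes around~\eqref{eq:limit1}--\eqref{eq:psiSol}, admits \emph{multiple} viscosity solutions (e.g.\ $v$-independent ones versus the quadratic profile). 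No comparison principle is available, and section~\ref{sec:indentification}, which you cite, does not prove one; it only exhibits particular solutions. So your step ``$\overline\varphi^M=\underline\varphi^M$'' cannot be closed, and the half-relaxed route, as written, does not deliver the conclusion of part~(1). A second, related issue is the uniform local \emph{lower} bound on $\varphi^M_\eps$: you need it even to ensure the half-relaxed limits are finite, and the paper's proof of this (Lemma~\ref{lem:UnifBoundphi}) relies precisely on the spatial Lipschitz bound that you are trying to avoid.

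What the paper actually does is establish genuine equicontinuity and then invoke Arzel\`a--Ascoli directly. The spatial regularity comes from a Bernstein argument (Proposition~\ref{prop:Repsphi}) applied not to $\varphi^M_\eps$ but to $w_\eps=\sqrt{2E^2-\varphi^M_\eps}$; this change of unknown is what tames the competition you correctly identified between the singular drift $\eps^{-1}(v-\JJ[g_\eps])\partial_v\varphi^M_\eps$ and $\eps^{-1}|\partial_v\varphi^M_\eps|^2$, producing bounds $|\partial_v w_\eps|\le \sqrt{\eps/(2t)}+\theta(M)$ and $|\partial_x w_\eps|\le \sqrt{1/(2t)}+\theta(M)$ that are uniform in $\eps$. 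Time regularity is then obtained by a separate comparison argument (Lemma~\ref{lem:Rtimephi}). With these in hand, Arzel\`a--Ascoli gives subsequential locally uniform convergence to a continuous $\varphi^M$ without any appeal to a comparison principle for~\eqref{eq:viscosity}; the viscosity-solution property, the constraint $\max\varphi^M=0$, and the support statement are then derived exactly as you outline in your last two paragraphs.
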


\medskip
Our second main result shows that the properties proved for the truncated problem generalize to the original problem.\\

\begin{theo}
\label{theo:main2}
 \john{Let $g_\eps(t,x,v)$ be the solution of~\eqref{eq:evo1}-\eqref{eq:It} with initial conditions $g^0_\eps(x,v)$ satisfying assumption (H), and $f_\eps^{M}(t,x,v)$ the solution of the truncated problem~\eqref{eq:truncated} with the same initial condition: $f_\eps^{M,0}(x,v)=g^0_\eps(x,v)$. Then, there exists $M_0$ large enough such that for any $M>M_0$ and every regular test function $\phi\in C_c^\infty(\R^2)$ it holds that:
 $$
\Big|\int_{\R^2} \phi(x,v) f_\eps^{M}(t,x,v)\,dxdv-\int_{\R^2} \phi(x,v)g_\eps(t,x,v)\,dxdv\Big|\xrightarrow{\eps\rightarrow0}0.
 $$
 In particular, $g_\eps$ is converging towards $f^M$, the weak limit of $f_\eps^{M}$ as $\eps$ is going to 0.}
\end{theo}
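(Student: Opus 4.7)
The plan is to establish the stronger statement that $h_\eps^M := g_\eps - f_\eps^M$ converges to zero in $L^1(\R^2)$ exponentially fast in $\eps$; the claimed weak convergence against any $\phi\in C_c^\infty(\R^2)$ then follows immediately from $|\langle\phi,h_\eps^M\rangle|\leq \|\phi\|_\infty\|h_\eps^M\|_{L^1}$. Two structural observations drive the proof: first, both~\eqref{eq:evo1} and~\eqref{eq:truncated} are driven by the same quantity $\JJ[g_\eps](t)$ (treated as an external input in the truncated equation), so that they differ only through the replacement of $N$ by $N_M$; second, by the a priori estimates of Sections~\ref{sec:full}--\ref{sec:preliminary}, $g_\eps$ admits a Gaussian-type upper bound of variance of order $\eps$, so that its mass (and that of its $v$-derivative) on $\{|v|>M\}$ is exponentially small in $1/\eps$.

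Since the two solutions start from the same initial data, subtracting~\eqref{eq:truncated} from~\eqref{eq:evo1} yields the linear inhomogeneous Fokker-Planck equation
\begin{equation*}
\partial_t h_\eps^M \;=\; \LL_M h_\eps^M \;+\; \partial_v\bigl((N(v)-N_M(v))\,g_\eps\bigr), \qquad h_\eps^M(0,\cdot,\cdot)\equiv 0,
\end{equation*}
where $\LL_M$ denotes the Fokker-Planck operator of~\eqref{eq:truncated}. For fixed $\eps$, $\LL_M$ has globally Lipschitz coefficients in $(x,v)$ and therefore generates a Markov Fokker-Planck semigroup $S_M(t,s)$ contractive in $L^1(\R^2)$. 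Crucially, the source term is supported in $\{|v|>M\}$ since $N=N_M$ on $[-M,M]$. Duhamel's formula with vanishing initial condition then gives
\begin{equation*}
\|h_\eps^M(t)\|_{L^1(\R^2)} \;\leq\; \int_0^t \bigl\|\partial_v\bigl((N-N_M)\,g_\eps(s)\bigr)\bigr\|_{L^1(\R^2)}\, ds.
\end{equation*}

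To close the estimate, I would expand $\partial_v((N-N_M)g_\eps)=(N'-N_M')g_\eps+(N-N_M)\partial_v g_\eps$ and use the uniform Gaussian upper bound $g_\eps(t,x,v)\leq C_T\exp(-A(x^2+v^2)/(2\eps)+B_T)$ together with the pointwise control $|\partial_v g_\eps|\leq C_T\eps^{-1}(1+|v|+|x|)g_\eps$ (obtained via $\partial_v g_\eps=\eps^{-1} g_\eps\,\partial_v\psi_\eps$ and the quadratic control on $\psi_\eps$). Combined with the cubic bound $|N-N_M|+|v||N'-N_M'|\leq C|v|^3\,\mathbbm{1}_{|v|>M}$, a direct Gaussian-tail computation produces
\begin{equation*}
\bigl\|\partial_v\bigl((N-N_M)g_\eps(s)\bigr)\bigr\|_{L^1}\;\leq\; C(T)\,\eps^{-1}\,e^{-cM^2/\eps},
\end{equation*}
so that $\|h_\eps^M(t)\|_{L^1}\leq C(T)\,t\,\eps^{-1}\,e^{-cM^2/\eps}\to 0$ as $\eps\to 0$, uniformly on $[0,T]$, for any fixed $M>M_0$. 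Convergence to $f^M$ then follows by combining with the weak convergence $f_\eps^M\to f^M$ from Theorem~\ref{theo:main}.

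The main obstacle is the derivative estimate on $g_\eps$: since $\partial_v$ applied to $(N-N_M)g_\eps$ introduces an $\eps^{-1}$ factor, only a genuine Gaussian-in-$1/\eps$ tail on $g_\eps$ (and analogous control of its gradient) can beat this singularity and produce the exponential smallness needed. Establishing such bounds on $\partial_v\psi_\eps$ uniformly in $\eps$, in the presence of the singular interaction term $\eps^{-1}(v-\JJ[g_\eps])$, is precisely the delicate part of the maximum-principle analysis carried out in Section~\ref{sec:preliminary} for the approximating sequence $f_\eps^M$; the argument above would then need to extend those estimates back to $g_\eps$, for instance by first proving convergence along the subsequence where the bounds hold and arguing by a bootstrap on $M$.
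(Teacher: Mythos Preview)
Your strategy differs from the paper's, and the gap you flag in your last paragraph is real and decisive as the argument stands.

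\textbf{The gap.} The inequality $|\partial_v g_\eps|\le C_T\,\eps^{-1}(1+|v|+|x|)\,g_\eps$ does not follow from the quadratic \emph{upper} bound on $\psi_\eps$ of Lemma~\ref{lem:bound2psi}: an upper bound on a function places no constraint on its gradient. What is needed is a Bernstein-type estimate on $\partial_v\psi_\eps$, analogous to Proposition~\ref{prop:Repsphi}. But that proposition is proved only for the truncated problem, and its proof hinges on the global boundedness of $N_M'$ and $N_M''$ (the terms $N_M'(v)|p_\eps^v|^2$ and $\eps N_M''(v)p_\eps^v/\beta'(w_\eps)$ must be absorbed by the negative quartic); for the cubic $N$ these quantities blow up precisely on $\{|v|>M\}$, which is exactly the region you must control. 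The bootstrap you suggest is circular: transferring gradient bounds from $f_\eps^M$ back to $g_\eps$ presupposes the very closeness you are trying to establish.

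\textbf{How the paper sidesteps this.} The paper (Lemma~\ref{lem:Reps} and Proposition~\ref{prop:Reps}) never estimates $\partial_v((N-N_M)g_\eps)$ in $L^1$. Instead it uses that on $B_M(0)$ the difference $h_\eps^M=g_\eps-f_\eps^M$ solves a \emph{homogeneous} equation (since $N=N_M$ there and both equations carry the same $\JJ[g_\eps]$). Applying Kato's inequality against a cutoff $\phi$ equal to $1$ on $B_{M-1}(0)$ and supported in $B_M(0)$, every surviving term carries a derivative of $\phi$ and is therefore supported on the annulus, giving
\[
\frac{d}{dt}\int_{\R^2}|h_\eps^M|\,\phi \;\le\; \eps^{-1}C\int_{\R^2\setminus B_{M-1}(0)}|h_\eps^M|
\;\le\; \eps^{-1}C\int_{\R^2\setminus B_{M-1}(0)}(g_\eps+f_\eps^M).
\]
The right-hand side is controlled by the Gaussian upper bounds on $\psi_\eps$ and $\varphi_\eps^M$ alone (Lemmas~\ref{lem:bound2psi} and~\ref{lem:bound2phi}), yielding a tail of order $\eps\,e^{-cM^2/\eps}$ that kills the $\eps^{-1}$ prefactor. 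No gradient information on $g_\eps$ is invoked anywhere.

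\textbf{A possible repair of your route.} If you wish to keep the Duhamel structure, do not expand the $\partial_v$ onto $g_\eps$; shift it onto the parabolic semigroup instead, using $\|S_M(t,s)\partial_v F\|_{L^1}\lesssim (t-s)^{-1/2}\|F\|_{L^1}$, and bound $\|(N-N_M)g_\eps(s)\|_{L^1}$ directly via Lemma~\ref{lem:bound2psi}. One must then verify that the smoothing constant does not degenerate with $\eps$ in the presence of the $\eps^{-1}$ drift; the cutoff/Kato argument avoids this issue entirely.
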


\johnNew{The above results provide a convergence in sense that for any subsequence of $g_\eps$, we can extract a subsequence that converges. A full convergence result follows if there exists a unique possible limit. However, proving in a general case uniqueness of the limit for this type of equations is a notoriously complex problem. Previous works have either left this question open~\cite{barles2007concentrations}, stated results valid upon extraction of a subsequence as in~\cite{barles2008dirac}, or elegantly showed uniqueness of solutions under additional assumptions on the structure on the limit: assuming the limit is a Dirac measure at a single point in~\cite{barles2008dirac} (\emph{monormorphic} populations in the realm of population biology), or in~\cite{mirrahimi:12} a combination of two Dirac measures (\emph{dimorphic} populations). The same difficulty arises here, as mentioned above. However, in the present case, the coupling term suggests, as in monomorphic populations, that the voltage and adaptation variables converge to a Dirac mass. Indeed, the coupling term  strongly constrains the dispersion of trajectories, and any solution with a voltage away from the average voltage $\JJ[g_\eps](t)$ will be quickly attracted to $\JJ[g_\eps](t)$. To appreciate this property, it is useful to consider the McKean-Vlasov equation governing the stochastic trajectories associated with~\eqref{eq:evo1}\footnote{We will see that $\JJ[g_\eps](t)$ is well-defined for all times regardless of convergence results in $\eps$, and can thus be considered as an external input for the above equation.}:
\[\begin{cases}
dv_t = \Big(-N(v_t)-x_t +\frac{1}{\eps}(\JJ[g_\eps](t)-x_t)\Big)\,dt + \sqrt{2}dW_t\\
dx_t = (b v_t-ax_t)\,dt,
\end{cases}
\]
A neuron with initial condition $(v_0,x_0)$ quickly compensates any deviation of $v_0$ from $\JJ[g_\eps](0)$ within a time of order $\eps$. Indeed, at this timescale, the variables $(\tilde{v}_t,\tilde{x}_t) = (v_{\eps t},x_{\eps t})$ are given by the stochastic equations: 
\[\begin{cases}
d\tilde{v}_t = \eps \Big(-N(\tilde{v}_t)-\tilde{x}_t\Big)\,dt +(\JJ[g_\eps](\eps t)-\tilde{x}_t)\,dt + \sqrt{2\eps}dW_t\\
d\tilde{x}_t = \eps(b \tilde{v}_t-a\tilde{x}_t)\,dt.
\end{cases}
\]
For small $\eps$, $\tilde{x}_t$ is thus almost constant, and $\tilde{v}_t$ has a dynamics a stochastic particle in a perturbed quadratic potential (with a perturbation of order $\eps$), in the regime of small noise. Within a time of order $\eps$, the particle converges exponentially fast towards $\JJ[g_\eps](0)$, and deviations from that value are extremely rare, and their probability may be evaluated using Freidlin-Wentzell type of estimates~\cite{FredilinWentzell}. This formal argument is not only valid at the initial time. If at a given time $t>0$ a trajectory deviates from $\JJ[g_\eps](t)$, the same argument allows us to prove that $(\tilde{v}_s,\tilde{x}_s)=({v_{t+\eps s}},{x_{t+\eps s}})$ quickly returns to $\JJ[g_\eps](t)$, and therefore the particular coupling of the FitzHugh-Nagumo equation formally suggests to consider only Dirac-distributed measures when $\eps\to 0$. And as indicated above (eq.~\eqref{eq:psiSol}), there exists a single Dirac measure satisfying the limit equation, and this Dirac mass is centered at a point $(\alpha(t),\beta(t))$ satisfying the FitzHugh-Nagumo equation~\eqref{eq:FhNLim}, in turn formally ensuring convergence of the full sequence $g_\eps$ towards this Dirac distribution.
}

\smallskip
\john{Note that, because of the concentration of the solutions, the truncation is no longer active when $M$ is large enough. In particular, we do not have a double-limit problem and associated issues, as the above convergence is valid for fixed $M$ large enough.}
The proofs of these theorems use concentration techniques developed initially in the context of evolutionary and adaptive systems (see e.g.~\cite{diekmann2005dynamics,champagnat2011evolutionary,barles2007concentrations,barles2008dirac} for Lotka-Volterra parabolic and integral equations). Moreover, the notion of viscosity solution convergence and characterization of a limit in terms of a Hamilton-Jacobi equation was well documented for those systems (for a general introduction to the theory see e.g~\cite{crandall1992user,barles1994solutions,evans1997partial}\footnote{\john{We recall that a viscosity solution for a partial differential equation, roughly speaking, the limit of a sequence of solutions for an associated regularized problem as the regularization vanishes. Here, our regularization is both having a finite the coupling term and having a small diffusion. Typical examples of viscosity solutions arise are degenerate diffusions problems, regularized by adding a small diffusion term, and viscosity solution are the limits of the sequence of solutions obtained as the diffusion coefficient vanishes. }}).

To prove these results, we start in section~\ref{sec:full} by a thorough study of the properties of the solution to the general case of equation~\eqref{eq:evo1}, with a particular focus on \emph{a priori} bounds and tightness. We then study in section~\ref{sec:preliminary} the truncated equation, specifically regularity estimates obtained from maximum/comparison principles for elliptic equations. These technical steps will provide us with the main elements used to prove, in section~\ref{sec:rel}, Theorems~\ref{theo:main} and~\ref{theo:main2}.

\section{Uniform upper bounds for the general non-local equation}
\label{sec:full}
We start by studying the general problem~\eqref{eq:evo1} with no truncation, and show some useful \emph{a priori} uniform bounds, particularly that $ \cris{\JJ[g_\eps]} (t)$ and its derivatives are uniformly controlled and converging locally uniformly towards some continuous function $\john{\alpha}(t)$. Moreover, by taking initial conditions adequately, we will control, for each $\eps$, the behavior of $g_\eps$ and their Hopf-Cole transformations when time increases. Both results will be crucial for our proof of Theorem~\ref{theo:main2} to show that the difference between $f^{M}_\eps$ for $M$ larger than a constant and $g_\eps$ is arbitrarily small as $\eps$ vanishes.
\cris{
\begin{lem}
\label{lem:Iboundfull}
Consider the solution of equation~\eqref{eq:evo1} with initial condition $g_\eps^0$ satisfying assumption (H). 
There exists a positive constant $C_I$ independent of $\eps<1$ such that
$$
 \begin{cases}
 |  \cris{\JJ[g_\eps]} (t) |\leq C_I & \text{ for all }t\geq0,\\
 \Big|\frac{d}{dt} \cris{\JJ[g_\eps]} (t)\Big|\leq C_{I} & \text{ for all }t\geq0,
 \end{cases}
$$
in a weak sense. Consequently, we have locally uniform convergence along subsequences of $ \cris{\JJ[g_\eps]} (t)$, as $\eps$ goes to $0$.
\end{lem}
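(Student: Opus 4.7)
The plan is to derive uniform-in-$\eps$ and uniform-in-$t$ bounds on the polynomial moments of $g_\eps$ by multiplying equation \eqref{eq:evo1} against polynomial test functions in $v,x$ and integrating by parts, then exploiting the cubic damping of the drift $N(v)$. Both claimed estimates on $\JJ[g_\eps]$ follow from such moment bounds, and Arzel\`a--Ascoli provides the convergence statement.

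For the uniform bound on $\JJ[g_\eps]$ itself, introduce the Lyapunov functional $L_\eps(t)=\int(bv^2+x^2)g_\eps\,dxdv$, whose weights are chosen so that the cross terms $\int vx\,g_\eps$ arising from the $x$- and $v$-drifts cancel upon integration by parts. A direct computation yields
\[
 \tfrac{d}{dt}L_\eps \leq -2a\int x^2 g_\eps - 2b \int v N(v) g_\eps - 2b\eps^{-1}\int(v-\JJ[g_\eps])^2 g_\eps + 2\eps + 2b,
\]
where the singular term is manifestly non-positive, the drift term contributes $-2b\,m_4^v$ at leading order (with $m_k^v:=\int v^k g_\eps$), and Young's inequality absorbs sub-leading polynomial corrections. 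Using $m_4^v\geq 2m_2^v-1$, this gives $\tfrac{d}{dt}L_\eps\leq -c\,L_\eps+C$ with $c,C>0$ independent of $\eps\in(0,1)$. For $L_\eps(0)$, assumption (H) provides $g_\eps^0\leq e^{(B-A(v^2+x^2)/2)/\eps}$; splitting the integration domain into $\{v^2+x^2\leq R^2\}$ (where mass conservation bounds the integral by $R^2$) and its complement (where for $R^2>2B/A$ the Gaussian decay dominates the $e^{B/\eps}$ prefactor and yields an $O(\eps^2)$ contribution) shows $L_\eps(0)$ is uniformly bounded. Gr\"onwall then gives a uniform bound on $L_\eps(t)$, and Cauchy--Schwarz yields $|\JJ[g_\eps](t)|\leq (m_2^v)^{1/2}\leq C_I$.

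For the derivative bound, testing \eqref{eq:evo1} against $v$ gives $\tfrac{d}{dt}\JJ[g_\eps]=-\int N(v) g_\eps - \int x g_\eps$, the singular coupling contribution $-\eps^{-1}\int(v-\JJ[g_\eps])g_\eps$ vanishing identically. The cubic growth of $N$ requires a uniform pointwise bound on $m_3^v$, which we obtain through a second Lyapunov estimate on $m_4^v$. The key observation is that the singular contribution
\[
 -4\eps^{-1}\int v^3(v-\JJ[g_\eps])g_\eps = -4\eps^{-1}\bigl[\nu_4 + 3\JJ[g_\eps]\,\nu_3 + 3\JJ[g_\eps]^2\nu_2\bigr],\quad \nu_k:=\int(v-\JJ[g_\eps])^k g_\eps,
\]
is non-positive: viewed as a quadratic in $\JJ[g_\eps]$, the bracket is bounded below by $\nu_4/4$, thanks to the Cauchy--Schwarz inequality $\nu_3^2\leq\nu_2\nu_4$. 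The drift term yields a dominant $-4m_6^v$; a Young inequality with scaling absorbs the cross term $\int v^3 x g_\eps$ into a small multiple of $m_6^v$ plus a multiple of $m_2^x$ (bounded by the preceding step); and Jensen's inequality $m_6^v\geq(m_4^v)^{3/2}$ then produces $\tfrac{d}{dt}m_4^v\leq -c' m_4^v + C'$. The initial value $m_4^v(0)$ is controlled by the same domain-splitting argument. Hence $m_4^v$ is uniformly bounded and so is $|m_3^v|\leq(m_4^v)^{3/4}$, giving $|\tfrac{d}{dt}\JJ[g_\eps]|\leq C_I$ in the distributional sense. Combined with the uniform boundedness of $\JJ[g_\eps]$, this provides uniform Lipschitz continuity of $\JJ[g_\eps]$, and Arzel\`a--Ascoli yields locally uniform convergence along subsequences.

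The main obstacle is ensuring that the strongly singular $\eps^{-1}$ coupling contributes dissipatively (or vanishes) in every relevant moment equation: immediate for the first moment, manifest for the second, but more delicate for the fourth moment, where the completion-of-square identity above, based on Cauchy--Schwarz between centred moments, is essential to prevent a $1/\eps$ blow-up. A secondary subtlety concerns the initial condition, where the naive bound $g_\eps^0\leq e^{B/\eps}(\cdots)$ diverges as $\eps\to 0$, so the argument must exploit $\int g_\eps^0=1$ via the domain-splitting described above.
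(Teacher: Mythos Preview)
Your proof is correct and follows the same overall architecture as the paper --- polynomial moment bounds via testing and integration by parts, Gr\"onwall, then Arzel\`a--Ascoli --- but the execution differs in two respects worth recording. First, the paper works directly with the combined fourth moment $\int(x^4+v^4)g_\eps$ rather than your two-stage scheme (second moment via the weighted Lyapunov $L_\eps$, then fourth moment in $v$); your choice of weights $bv^2+x^2$ to cancel the cross terms is a nice touch not needed in the paper's route. Second, and more substantively, the paper dispatches the singular contribution $-\eps^{-1}\int v^3(v-\JJ[g_\eps])g_\eps$ by two applications of H\"older's inequality on \emph{uncentred} moments (namely $|\JJ[g_\eps]|\leq m_4^{1/4}$ and $\int|v|^3 g_\eps\leq m_4^{3/4}$), which collapses the term to something $\leq 0$ in one line. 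Your discriminant argument via centred moments and the Cauchy--Schwarz bound $\nu_3^2\leq\nu_2\nu_4$ is equally valid and arguably more transparent about \emph{why} the term is dissipative, but it is a genuinely different computation. Finally, your domain-splitting treatment of the initial moments is more careful than the paper's: the paper simply writes $\int(x^4+v^4)g_\eps^0\leq\int(x^4+v^4)e^{-A(x^2+v^2)/2\eps}$, silently dropping the factor $e^{B/\eps}$ from hypothesis~(H), whereas your argument correctly uses mass conservation on the inner ball to avoid this issue.
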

}
\begin{proof} 
\cris{
\john{This result is proved by controlling order-four moments of the solutions of equation~\eqref{eq:evo1} and using their natural relationship with the quantities we characterize in this lemma.} It follows from hypothesis (H) that
$$
 \int_{\R^2}(x^4+v^4)g_\eps^0(x,v) \leq \int_{\R^2}(x^4+v^4)e^{-A(x^2+v^2)/2\eps}\leq \int_{\R^2}(x^4+v^4)e^{-A(x^2+v^2)/2}=\kappa_0,
$$
with $\kappa_0$ independent of $\eps<1$. The rest of the proof is direct consequence of equation~\eqref{eq:evo1}.} Indeed, we have that
\begin{multline*}
  \frac1{4}\frac{d}{dt}\int_{\R^2} (x^4+v^4)g_\eps \john{(t,x,v)\,dx dv}
  = \frac14\int_{\R^2} (x^4+v^4)\left[\eps\partial^2_{xx}g_\eps+\partial^2_{vv}g_\eps\right]
  \\
  \quad+\frac14\int_{\R^2} (x^4+v^4)\partial_x\big[\big(ax-bv\big)g_\eps\big]
 +\frac14\int_{\R^2} (x^4+v^4)\partial_v\big[\big(N(v)+x+\eps^{-1}(v- \cris{\JJ[g_\eps]} (t))\big)g_\eps\big]
  \\
  = \int_{\R^2} \Big[x^2\big(3\eps-ax^2+bvx\big)+v^2\big(3-vN(v)-vx\big)-\eps^{-1}v^3(v- \cris{\JJ[g_\eps]} (t))\Big]g_\eps.
 \end{multline*}
 \john{(to alleviate the notations, we only recalled the variables $(t,x,v)$ in the notation of the map $g_\eps$ and the integration variables in the lefthand side). This expression involves only polynomial terms and a more complex non-local term multiplied by a coefficient $\eps^{-1}$. We start by proving that the nonlocal term is non-positive, using H\"older's inequality several times.}
Using the mass conservation property and non-negativity of $g_{\eps}$, we obtain:
\begin{multline*}
\int_{\R^2} v^3( \cris{\JJ[g_\eps]} (t)-v) g_\eps = \int_{\R^2} v^3g_\eps\int_{\R^2} vg_\eps-\int_{\R^2} v^4 g_\eps
\\
\leq\int_{\R^2} |v|^3g_\eps\left(\int_{\R^2} v^4g_\eps\right)^{1/4}-\int_{\R^2} v^4 g_\eps.
\end{multline*}
Moreover, we also have that
$$
 \int_{\R^2}|v|^3g_\eps =  \int_{\R^2}|v|^3g_\eps^{3/4}g_\eps^{1/4}\leq\left( \int_{\R^2} v^4 g_\eps\right)^{3/4}\left( \int_{\R^2} g_\eps\right)^{1/4}=\left( \int_{\R^2}v^4 g_\eps\right)^{3/4},
$$
thus, for any $\eps$ it holds that
\begin{equation}\label{eq:remove_eps_1}
-\int_{\R^2} v^3\eps^{-1}(v- \cris{\JJ[g_\eps]} (t))\big)g_\eps\leq0.
\end{equation}

\john{We can thus upperbound our estimate by the following expression, use the fact that } $N$ is a cubic polynomial with leading term $v^3$, to obtain that
\begin{align*}
\frac{d}{dt}\int_{\R^2} (x^4+v^4)g_\eps & \leq 4\int_{\R^2}\Big[3x^2\eps-ax^4+bx^3v+3v^2-v^3N(v)-xv^3\Big]g_\eps(t,v)
\\
& \leq C- c\int_{\R^2} (x^4+v^4)g_\eps,
\end{align*}
for some positive constants $c,C$ depending on all the parameters of the system but not on $\eps$ for $\eps<1$. Gronwall's lemma thus implies that:
$$
 \int_{\R^2} (x^4+v^4)g_\eps(t,x,v)\leq \max\left\{C,\int_{\R^2} (x^4+v^4)g_\eps^0(x,v)\right\},
$$
by taking $C$ larger if necessary. Finally, using once again the mass conservation principle we get that for $k\in\{1,2,3\}$ it holds that
\begin{multline*}
 \int_{\R^2}\big[|x|^k+ |v|^k\big]g_\eps(t,x,v)
 \leq
 \int_{\R^2} (1+x^4+v^4)g_\eps(t,x,v)
 \\
 \leq 1+\max\left\{C,\int_{\R^2} (x^4+ v^4\john{)} g_\eps^0(x,v)dv\right\}=:C_I,
\end{multline*}
and thus both $| \cris{\JJ[g_\eps]} (t)|$ and $\Big|\frac{d}{dt} \cris{\JJ[g_\eps]} (t)\Big|$ are uniformly bounded.

\john{These two bounds ensure that $\JJ[g_\eps]$ is a sequence of bounded and equicontinuous functions, and therefore Arzel\`a-Ascoli theorem ensures that for any $T>0$, there exists a subsequence that converges uniformly on $[0,T]$, thus implying the last statement of the lemma.}
\end{proof}
\bigskip

This uniform control on $ \cris{\JJ[g_\eps]} (t)$ and its derivative \cris{imply} that the collection of maps $\psi_\eps=\eps\log g_\eps$ is uniformly upper-bounded, and by consequence if $\eps$ is small enough, it suffices to study the limit behavior of $g_\eps$ only in a compact subset.  To demonstrate this point, we will use the following operator acting on maps continuously differentiable with respect to time and twice continuously differentiable with respect to space $C^1([0,\infty);C^2(\R^2))$:
\begin{multline*}
\LL_\eps\phi := \partial_t\phi- \left(\eps a+\eps N'(v)+1\right)
-
(ax-bv)\partial_x\phi
\\
-
\left( N(v)+x+\eps^{-1}(v- \cris{\JJ[g_\eps]} (t))\right)\partial_v\phi
\\
-
|\partial_x\phi|^2-\eps\partial^2_{xx}\phi
-
\eps^{-1}|\partial_v\phi|^2-\partial^2_{vv}\phi.
\end{multline*}
\begin{lem}
\label{lem:bound2psi}
Assume that initial conditions $\psi_\eps(0,x,v)=\eps\log g_\eps^0(x,v)$ \john{satisfy assumption (H).}
Then, there is some constants $0<A'<\min(a,1)$, $B'>0$ and $D>0$ such that, for any $\eps>0$ and any $t>0$,
\begin{equation}
\label{eq:boundphi}
\psi_\eps(t,x,v)\leq -\frac {A'} 2\big(v- \cris{\JJ[g_\eps]} (t)\big)^2-\frac {{A'} x^2}2+ B'+ Dt.
\end{equation}
\end{lem}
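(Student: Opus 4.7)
The plan is to prove the pointwise upper bound by exhibiting an explicit supersolution of the Hopf--Cole equation $\LL_\eps \phi=0$ satisfied by $\psi_\eps$, and invoking a parabolic comparison argument. Concretely, I would take
$$\bar\psi(t,x,v):=-\frac{A'}{2}\bigl(v-\JJ[g_\eps](t)\bigr)^2-\frac{A'}{2}x^2+B'+Dt,$$
with $A'\in(0,\min(A,a,1))$ fixed and $B',D>0$ constants to be tuned. The initial comparison $\bar\psi(0,\cdot)\geq\psi_\eps(0,\cdot)$ follows from hypothesis (H) after expanding the square in $v$ and absorbing the linear cross term $A'v\JJ[g_\eps](0)$ by the quadratic margin $\tfrac{A-A'}{2}v^2$ via Young's inequality; the uniform bound $|\JJ[g_\eps](0)|\le C_I$ from Lemma~\ref{lem:Iboundfull} then makes the resulting constraint on $B'$ depend only on $A$, $A'$, $B$ and $C_I$, hence on nothing $\eps$-dependent.

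Next, I would compute $\LL_\eps \bar\psi$. The derivatives of $\bar\psi$ are explicit: $\partial_v\bar\psi=-A'(v-\JJ[g_\eps])$, $\partial_x\bar\psi=-A'x$, $\partial^2_{vv}\bar\psi=\partial^2_{xx}\bar\psi=-A'$, and $\partial_t\bar\psi=D+A'(v-\JJ[g_\eps])\tfrac{d}{dt}\JJ[g_\eps]$. The diverging $\eps^{-1}$-terms combine favorably: $\eps^{-1}|\partial_v\bar\psi|^2$ and the drift contribution $-\eps^{-1}(v-\JJ[g_\eps])\partial_v\bar\psi$ leave the non-negative residue $\eps^{-1}A'(1-A')(v-\JJ[g_\eps])^2\ge 0$ thanks to $A'<1$, and the deterministic $x$-drift yields the non-negative $x^2$-coefficient $A'(a-A')\ge 0$ thanks to $A'<a$. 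The remaining contributions are all controllable: the cubic-in-$v$ term $A'(v-\JJ[g_\eps])N(v)$ has leading order $A'v^4$ and is therefore bounded below by an explicit constant depending only on $N$, $A'$ and $C_I$; the term $-\eps N'(v)$ grows only like $\eps v^2$ and is dominated by the $\eps^{-1}$-reservoir for $\eps<1$; the cross terms $-bA'xv$ and $A'(v-\JJ[g_\eps])x$ are absorbed into the $x^2$ and $(v-\JJ[g_\eps])^2$ quadratic margins by Young's inequality; and $|A'(v-\JJ[g_\eps])\tfrac{d}{dt}\JJ[g_\eps]|$ is controlled via $|\tfrac{d}{dt}\JJ[g_\eps]|\le C_I$ from Lemma~\ref{lem:Iboundfull}. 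Collecting these bounds yields $\LL_\eps\bar\psi\ge D-K$ for some finite $K$ independent of $\eps$, so choosing $D\ge K+1$ gives the strict supersolution inequality $\LL_\eps\bar\psi\ge 1>0$.

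Finally, I would conclude by a comparison argument. Any first contact time between $\psi_\eps$ and $\bar\psi$ (from below) would force the gradients to coincide at the contact point, so the nonlinear gradient-squared terms in $\LL_\eps$ would cancel in the difference; the first-order time condition and the second-order space conditions would then yield $\LL_\eps\bar\psi-\LL_\eps\psi_\eps\le 0$, contradicting the strict positivity obtained above. The main obstacle lies precisely in making this comparison rigorous on the unbounded domain $\R^2$ with the locally-but-not-globally Lipschitz drift $N(v)$: one must guarantee that the contact point is attained in a bounded region, using the quadratic coercivity of $\bar\psi$ at infinity together with a tail estimate on $\psi_\eps$ extracted from the $\L^1(1+v^4+x^4)$ moment bound implicit in Lemma~\ref{lem:Iboundfull}. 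A cleaner alternative I would adopt if the direct argument proves technically heavy is to first carry out the comparison on the truncated equation~\eqref{eq:truncated}, where $N_M$ is globally Lipschitz and standard maximum principles apply, and then pass to the limit $M\to\infty$ using the same uniform moment control to identify the limits of both $\bar\psi$ and $\psi_\eps$.
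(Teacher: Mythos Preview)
Your proposal is correct and follows essentially the same approach as the paper: the same quadratic supersolution $\bar\psi$, the same verification that the $\eps^{-1}$-terms in $\LL_\eps\bar\psi$ leave the nonnegative residue $\eps^{-1}A'(1-A')(v-\JJ[g_\eps])^2$ when $A'<1$, and the same use of $A'<a$ and the quartic growth of $(v-\JJ[g_\eps])N(v)$ to bound the remaining terms below by $D-C$. The paper simply asserts the comparison principle once $\LL_\eps\bar\psi\ge0$ and $\bar\psi(0,\cdot)\ge\psi_\eps(0,\cdot)$ are checked, whereas you spell out the first-contact argument and raise the legitimate concern about the unbounded domain with non-Lipschitz drift; your suggested workaround via the truncated problem is reasonable but not pursued in the paper.
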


\begin{proof} The proof follows the classical technique of exhibiting a suitable super-solution to~\eqref{eq:hct}. Here, we consider the map 
$$\phi_\eps(t,x,v)=-\frac {A'} 2\big(v- \cris{\JJ[g_\eps](t)} \big)^2-\frac {A'x^2}2+B'+Dt$$
\john{for some constants $A'$, $B'$ and $D$ to be specified.}

\john{Assumption (H) implies the existence of some $A'<\min(1,a)$ and $B'$ such that for any $\eps\in(0,1)$, the initial condition $\psi_\eps(0,x,v)=\eps\log g_\eps^0(x,v)$ is upper-bounded by $\phi_\eps$. Indeed, assumption (H) ensures that
$$
  \sup_{0<\eps<1} \psi_\eps(0,x,v)\leq-\frac A2(v^2+x^2)+B,
 $$
 with $A\leq \min(a,1)$. Under (H), Lemma~\ref{lem:Iboundfull} implies that $ \cris{\JJ[g_\eps]} (t)$ and its time-derivative are uniformly bounded by a constant $C_I$, so that for all $\eps\in(0,1)$, we can rewrite assumption (H) as:
 \begin{align*}
 \psi_\eps(0,x,v) &\leq-\frac A2(v- \cris{\JJ[g_\eps]} (0))^2-Av \cris{\JJ[g_\eps]} (0)+\frac A2C_I^2-\frac A2x^2+B\\
 &\leq -\frac{A}{4}(v- \cris{\JJ[g_\eps]} (0))^2 -\frac A2x^2 + \frac A2C_I^2+B
 \end{align*}
implying the existence of constants $A'$ (smaller than $A$) and $B'$ (larger than $B$) for which 
$$
  \sup_{0<\eps<1} \psi_\eps(0,x,v)\leq \phi_\eps(0,x,v).
 $$
This proves that the inequality claimed in the proposition is true of the initial condition for a proper choice of constants. We now use Gronwall's lemma to show that this inequality persists in time. First of all, we note that:}
\begin{multline*}
 \LL_\eps\phi_\eps
= D
-A' \big(v- \cris{\JJ[g_\eps]} (t)\big)\frac{d \cris{\JJ[g_\eps]} (t)}{dt}
-\big(\eps a+\eps N'(v)+1\big)
\\
+A'x(ax-bv)
+A'(v- \cris{\JJ[g_\eps]} (t))\left( N(v)+x+\eps^{-1}(v- \cris{\JJ[g_\eps]} (t))\right)
\\
-A'^2x^2\eps A'
-\eps^{-1}A'^2(v- \cris{\JJ[g_\eps]} (t))^2+A'.
\end{multline*}
We start considering the terms of order $\eps^{-1}$; re-arranging those terms adequately, we get that
$$
 A'\big(v- \cris{\JJ[g_\eps]} (t)\big)^2-A'^2\big(v- \cris{\JJ[g_\eps]} (t)\big)^2=A'(1-A')\big(v- \cris{\JJ[g_\eps]} (t)\big)^2\geq0,
$$
whenever $A'\leq1$. Moreover, recalling that $N$ grows as $v^3$, that both $ \cris{\JJ[g_\eps]} (t)$ and $ \cris{\JJ[g_\eps]} (t)'$ are uniformly bounded from the result of lemma~\ref{lem:Iboundfull}, and that $A'\leq a$, it follows that
\begin{multline*}
(1+\eps)A'+D-A'\big(v- \cris{\JJ[g_\eps]} (t)\big)\frac{d \cris{\JJ[g_\eps]} (t)}{dt}-\left(\eps a+\eps N'(v)+1\right)
\\
+A'x^2(a- A')-A'b\,xv+A'\big(v- \cris{\JJ[g_\eps]} (t)\big)(N(v)+x)\geq D-C,
\end{multline*}
for some generic constant $C$ independent of $\eps\leq1$ but depending on $A'$, $a$ and $C_{I}$ (see Lemma~\ref{lem:Iboundfull}). Therefore, we can take $D$ sufficiently large such that:
$$
\LL_\eps\phi_\eps\geq D-C\geq0,
$$
Finally, the fact that $
\phi_\eps(0,x,v)=-\frac {A'}2(v- \cris{\JJ[g_\eps]} (0))^2-\frac{ A'x^2}2+B'\geq \psi_\eps(0,x,v)$ proves that $\phi_\eps$ is a super-solution of the equation $\LL_\eps\psi=0$, concluding our proof.
\end{proof}

We finish this section by stating that the same upper bound is valid for the solutions of the truncated equation~\eqref{eq:truncated}. Since the proof closely follows the arguments developed above, we do not provide it fully but outline only a few aspects that may differ. Again, it is convenient to introduce the differential operator $\LL_{\eps}^M$ acting on $C^1([0,\infty);C^2(\R^2))$ such that $\varphi_\eps^M:=\eps\log f_\eps^M$ solves the equation
\begin{multline}
\label{eq:phi}
\LL_{\eps}^M\varphi^M_\eps:=\partial_t\varphi^M_\eps
-
\left(\eps a+\eps N_M'(v)+1\right)
-
(ax-bv)\partial_x\varphi^M_\eps
\\
-
\left( N_M(v)+x+\eps^{-1}(v- \cris{\JJ[g_\eps]} (t))\right)\partial_v\varphi^M_\eps
\\
-
|\partial_x\varphi^M_\eps|^2-\eps\partial^2_{xx}\varphi^M_\eps
-
\eps^{-1}|\partial_v\varphi^M_\eps|^2-\partial^2_{vv}\varphi^M_\eps=0.
\end{multline}

\begin{lem}
\label{lem:bound2phi}
Assume that initial conditions $\varphi^M_\eps(0,x,v)=\eps\log f^M_\eps(0,x,v)$ \john{satisfy assumption (H) uniformly in $M$, i.e. for all $M$, satisfy the regularity conditions of (H) and}  are such that
$$
\varphi^M_\eps(0,x,v)\leq-\frac{A'}2\big(v- \cris{\JJ[g_\eps]} (0)\big)^2-\frac {A'x^2}2+B',
$$
for \cris{$A'$ and $B'$ positive constants of Lemma~\ref{lem:bound2psi}} \john{independent of $M$} . Then, for $M$ sufficiently large, there is a constant $D$ such that for any  $\eps \john{\in(0,1)}$ and $t>0$, we have:
\begin{equation}
\label{eq:boundphi2}
\varphi^M_\eps(t,x,v)\leq -\frac{A'}2\big(v- \cris{\JJ[g_\eps]} (t)\big)^2-\frac{A'x^2}2+ B'+ Dt.
\end{equation}
\end{lem}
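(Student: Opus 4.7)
The plan is to repeat the super-solution argument of Lemma~\ref{lem:bound2psi} with the same candidate
$$\phi_\eps(t,x,v)=-\tfrac{A'}2(v-\JJ[g_\eps](t))^2-\tfrac{A'}2 x^2+B'+Dt,$$
now applied to the truncated operator $\LL_\eps^M$. The initial inequality $\varphi_\eps^M(0,\cdot,\cdot)\leq\phi_\eps(0,\cdot,\cdot)$ is part of the hypothesis. I would first compute $\LL_\eps^M\phi_\eps$ exactly as in Lemma~\ref{lem:bound2psi} but with $N$ replaced by $N_M$, and observe that the $\eps^{-1}$-order contribution again amounts to $\eps^{-1}A'(1-A')(v-\JJ[g_\eps])^2\geq 0$ whenever $A'\leq 1$. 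What remains is to bound the $O(1)$ terms from below by a constant, which reduces to controlling the quadratic form
$$\Phi(x,v)=A'(v-\JJ[g_\eps])N_M(v)+A'(v-\JJ[g_\eps])x+A'(a-A')x^2-A'bvx,$$
together with the easily-handled terms $-\eps N_M'(v)$, $-A'(v-\JJ[g_\eps])\tfrac{d}{dt}\JJ[g_\eps]$ (bounded via Lemma~\ref{lem:Iboundfull}), and absolute constants.

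The genuine difficulty compared to Lemma~\ref{lem:bound2psi} lies in the treatment of $\Phi$: in the untruncated case $A'(v-\JJ[g_\eps])N(v)$ grows like $v^4$ and trivially dominates the bilinear terms, whereas $N_M$ grows only linearly at infinity. I would proceed in two steps. First, fix $\eta>0$ small enough that Young's inequality applied to the bilinear terms,
$$-A'bvx+A'(v-\JJ[g_\eps])x\geq -\tfrac{A'(b+1)\eta}{2}v^2-\tfrac{A'\eta}{2}(v-\JJ[g_\eps])^2-\tfrac{A'(b+1)}{2\eta}x^2,$$
allows absorbing the $x^2$ contribution into $A'(a-A')x^2$, which is non-negative provided $A'<a$. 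Second, I would exploit that $N'(\pm M)\sim 3M^2\to\infty$ as $M\to\infty$: outside $[-M,M]$, $A'(v-\JJ[g_\eps])N_M(v)$ is a positive quadratic in $v$ with leading coefficient $A'N'(\pm M)$, while inside $[-M,M]$ the expression is bounded by a constant depending on $M$. Choosing $M$ large enough so that $A'N'(\pm M)$ strictly exceeds the penalty $A'(b+1)\eta/2$ guarantees $\Phi(x,v)\geq -C_M$ uniformly in $\eps\in(0,1)$ and $(t,x,v)$.

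It then suffices to pick $D$ larger than the sum of all absolute constants collected above to obtain $\LL_\eps^M\phi_\eps\geq 0$, after which the comparison principle for the linear parabolic equation~\eqref{eq:truncated} (applied to $f_\eps^M$ and $\exp(\phi_\eps/\eps)$) yields $\varphi_\eps^M\leq\phi_\eps$ on $[0,\infty)\times\R^2$. The main obstacle is precisely this loss of the cubic growth of $N$ upon truncation; overcoming it requires the divergence $N'(\pm M)\to\infty$ as $M\to\infty$, which is why the statement is restricted to sufficiently large $M$.
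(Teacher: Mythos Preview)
Your approach is correct and mirrors the paper's: the same super-solution candidate $\phi_\eps$, the same handling of the $\eps^{-1}$ terms via $A'(1-A')(v-\JJ[g_\eps])^2\geq 0$, Young's inequality to absorb the bilinear cross terms into $A'(a-A')x^2$, and the divergence $N'(\pm M)\to\infty$ to dominate the residual quadratic $v$-penalty in the region $|v|>M$ (the paper carries out this last step with explicit constants $C_1,C_2,C_3$ but the mechanism is identical). One cosmetic slip: with Young written as you have it the $x^2$-penalty is $A'(b+1)/(2\eta)$, so absorbing it into $A'(a-A')x^2$ requires $\eta$ \emph{large} rather than small (equivalently, swap the roles of $\eta$ and $1/\eta$ in your display); this does not affect the argument.
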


\begin{proof}
The map $\phi_\eps$ introduced in the proof of Lemma~\ref{lem:bound2psi} also provides a super solution to equation~\eqref{eq:phi}, since we have:
\begin{multline*}
 \LL_\eps\phi_\eps
= D
-A'\big(v- \cris{\JJ[g_\eps]} (t)\big) \cris{\JJ[g_\eps]} (t)'
-\big(\eps a+\eps N_M'(v)+1\big)
\\
+A'x(ax-bv)
+A'(v- \cris{\JJ[g_\eps]} (t))\left( N_M(v)+x+\eps^{-1}(v- \cris{\JJ[g_\eps]} (t))\right)
\\
-A'^2x^2+\eps A'
-\eps^{-1}A'^2(v- \cris{\JJ[g_\eps]} (t))^2+A'.
\end{multline*}
Similar to the untruncated case, the contribution of the terms with $\eps^{-1}$ is nonnegative when $A'\leq1$. Moreover, for $|v|<M$ we have $N(v)=N_M(v)$, allowing to use the same inequalities as in the proof of the previous lemma. It thus only remains to show that the bound remains valid for $v>M$ or $v<-M$. This is not hard, based on the assumption that the map $N_M$ grows linearly in $v$ in that domain. However, we need to be very precise about the constant we choose. First, for $v>M$, it follows that one can find $C_{1}$, $C_{2}$ and $C_{3}$, specified later, for which:
\begin{eqnarray*}
 -A'(v- \cris{\JJ[g_\eps]} (t)) \cris{\JJ[g_\eps]} (t)' &\geq& -A'C_Iv-A'C_I^2,
\\ 
-(\eps a+\eps N'M(v)+1) &=& -\eps a-1-\eps N'(M),
\\
A'x(ax-bv) &=& A'ax^2-A'bxv \,\geq\, A'ax^2 - \frac{A'bC_1 x^2}{2}-\frac{A'b v^2}{2C_1}
\\
A'v(N_M(v)+x) & = & A'vN_M(v)+A'vx \,\geq\,A'vN_M(v) - \frac{A'C_2 x^2}{2}-\frac{A' v^2}{2C_2},
\end{eqnarray*}
and
$$
-A' \cris{\JJ[g_\eps]} (t)(N_M(v)+x)  \geq -A'C_I(N_M(v)+|x|)\geq -A'C_IN_M(v) - \frac{A'C_IC_3 x^2}{2}-\frac{A'C_I}{2C_3}.
$$
We have then that all terms in $\LL_\eps\phi_\eps$ proportional to $x$ and $x^2$ can be bounded by below by
$$
 A'(a-A')x^2 - \frac{A'bC_1 x^2}{2}- \frac{A'C_2 x^2}{2}- \frac{A'C_IC_3 x^2}{2}\geq0,
$$
by taking $C_1=\frac{2(a-A')}{3b}$, $C_2=\frac{2(a-A')}{3}$ and $C_3=\frac{2(a-A')}{3C_I}$. On the other hand, for that choice of constants, the terms proportional to $v$ are such that
\begin{multline*}
-A'C_Iv - \frac{A'bv^2}{2C_1}+A'vN_M(v) - \frac{A'v^2}{2C_2} - A'C_IN_M(v)-\eps N'(M)
\\
= -A'C_Iv - \frac{3A'b^2v^2}{a-A'}+A'vN_M(v) - \frac{3A'v^2}{a-A'} - A'C_IN_M(v)-\eps N'(M),
\end{multline*}
but term $vN_M(v)$ corresponds to a quadratic polynomial with leading term $N'(M)$. More precisely, by taking $M>2C_I$, we have that
$$
A'vN_M(v)-A'C_IN_M(v) = \frac{A'vN_M(v)}{2}+A' N_M(v)\left(\frac v2-C_I\right)\geq\frac{A'vN_M(v)}{2},
$$
and also that
$$
A'vN_M(v) =  A'vN(M)+A'vN'(M)(v-M),
$$
thus it is possible to find $M^*$ such that for all $M>M^*$:
$$
A'vN_M(v)-A'C_IN_M(v)-\eps N'(M) \geq \frac{A'vN(M)}4+\frac{A'vN'(M)(v-M)}{2}.
$$
Finally, it remains to find the sign on the following expression
\begin{multline*}
-A'C_Iv - \frac{A'bv^2}{2C_1}+A'vN_M(v) - \frac{A'v^2}{2C_2} - A'C_IN_M(v)-\eps N'(M)
\\
\geq -\frac{A'C_I^2}2-\frac{A'v^2}2 - \frac{3A'b^2v^2}{a-A'} - \frac{3A'v^2}{a-A'}+\frac{A'vN(M)}4+\frac{A'vN'(M)(v-M)}{2},
\end{multline*}
the derivative of the righthand side polynomial, when $v>M$ is such that
\begin{multline*}
 -{A'v} - \frac{6A'b^2v}{a-A'} - \frac{6A'v}{a-A'}+\frac{A'N(M)}4+\frac{A'N'(M)(2v-M)}{2} 
 \\ 
 > A'v\left(-1 - \frac{6b^2}{a-A'} - \frac{6}{a-A'}+\frac{N'(M)}{2}\right),
\end{multline*}
i.e., strictly positive as soon as
$
N'(M) >-2 - \frac{12b^2}{a-A'} - \frac{12}{a-A'}. 
$
Taking $M^*$ larger if necessary, we conclude that for all $M>M^*$:
$$
\LL_\eps\phi_\eps\geq D, \qquad \text{ for all } \john{t>0}, x\in\R\text{ and } v>M.
$$
Similar arguments hold for $v\leq -M$ and we can find $D$ large enough such that $\LL_{\eps,M}\phi_\eps(t,x,v)\geq0$. Finally, we notice that $\phi_\eps(0,x,v)=-\frac {A'}2(v- \cris{\JJ[g_\eps]} (0)^2-\frac{A'x^2}2+B\geq \varphi^M_\eps(0,x,v),$ 
so that $\phi_\eps$ is a super-solution to~\eqref{eq:phi}:
$$
 \varphi^M_\eps(t,x,v)\leq -\frac {A'}2\big(v- \cris{\JJ[g_\eps]} (t)\big)^2-\frac{A'x^2}2+B+Dt,\qquad \john{\text{ for all }t\geq 0}.
$$

\end{proof}

\section{Regularity of the truncated problem}
\label{sec:preliminary}
Equation~\eqref{eq:truncated} approximates the original nonlinear equation~\eqref{eq:evo1} by replacing the drift function $N$ by a continuously differentiable function $N_{M}$, identical to $N$ for $\vert v\vert<M$, and with linear growth at infinity. Thanks to the globally Lipschitz-continuity of this drift function, we derive uniform regularity estimates in time and space for the associated family $\{\varphi^M_\eps\}_\eps$ of Hopf-Cole transformation of the solution $f^{M}_{\eps}$ to equation~\eqref{eq:truncated}$ (i.e., \varphi^M_\eps=\eps\log f^M_\eps$). These results open the way to using the Arzel\`a-Ascoli theorem when taking the limit $\eps\to 0$.

\label{sec:regularity}

For the truncated system, the Lipschitz-continuity of the drift indeed allows us to use Bernstein's method (see e.g.~\cite{barles1991weak,barles2009concentration}) to show regularity estimates. Actually, equation~\eqref{eq:phi} has a regularizing effect, and solutions are uniformly Lipschitz continuous at any positive time, independently of the regularity of initial conditions.
 \smallskip
 
\begin{prop}
\label{prop:Repsphi}
Assume that $\eps\leq 1$ and let $E=\sqrt{B'+DT}$ for $B'$ and $D$ the two positive constants of Lemma~\ref{lem:bound2phi} and $T>0$ \john{an arbitrary time}. Noting $w_\eps$ the map $w_\eps(t,x,v):=\sqrt{2E^2-\varphi_\eps^M(t,x,v)}$, there exists a constant $\theta(M)$ independent of \john{$\eps\in (0,1)$ and $T>0$ } such that:
 $$
  |\partial_v w_\eps(t,x, v)|\leq \sqrt{\frac{\eps}{2t}}+\theta(M),\quad
  |\partial_x w_\eps(t,x, v)|\leq \sqrt{\frac{1}{2t}}+\theta(M),
 $$
 all $0<t\leq T\text{ and } (x,v)\in\R^{2}$.
\end{prop}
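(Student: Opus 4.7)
The approach I would follow is the classical Bernstein method for viscous Hamilton--Jacobi equations (cf.\ the references cited just above). First, substituting $\varphi_\eps^M = 2E^2 - w_\eps^2$ into the equation $\LL_\eps^M \varphi_\eps^M = 0$ from Lemma~\ref{lem:bound2phi} and using the chain rule yields a quasilinear parabolic equation for $w_\eps$ of the form
\[
 \partial_t w_\eps - \eps\,\partial^2_{xx} w_\eps - \partial^2_{vv} w_\eps
 = (\partial_x w_\eps)^2 \Bigl(\tfrac{\eps}{w_\eps} - 2w_\eps\Bigr)
 + (\partial_v w_\eps)^2 \Bigl(\tfrac{1}{w_\eps} - \tfrac{2w_\eps}{\eps}\Bigr)
 + R_M(t,x,v,\nabla w_\eps),
\]
where $R_M$ gathers the first-order drift and source terms. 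Since Lemma~\ref{lem:bound2phi} guarantees $\varphi_\eps^M \leq B' + DT = E^2$ on $[0,T]\times\R^2$, we have $w_\eps \geq E$ uniformly in $\eps$; by enlarging $B'$ if needed we may assume $E$ is large enough that both quadratic-gradient coefficients are strictly negative. The resulting damping is of order $\eps^{-1}$ in the $v$-direction but only $O(1)$ in the $x$-direction, which is precisely what will produce the two different $\eps$-scalings claimed.

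The second step is to derive a parabolic inequality for $z_v := (\partial_v w_\eps)^2$ by differentiating the above equation with respect to $v$ and multiplying by $2\,\partial_v w_\eps$. The dominant $\eps^{-1}$-contributions are a linear source $2\eps^{-1} z_v$, coming from $\partial_v\bigl(\eps^{-1}(v-\JJ[g_\eps])\,\partial_v w_\eps\bigr)$, and a quadratic damping $-4\eps^{-1} z_v^{2}$, coming from $\partial_v\bigl(-2w_\eps(\partial_v w_\eps)^2/\eps\bigr)$. All other contributions are uniformly bounded by a constant $C_M$ depending only on $M$ and the parameters $a,b$, thanks to the uniform controls on $\JJ[g_\eps]$ and $\tfrac{d}{dt}\JJ[g_\eps]$ provided by Lemma~\ref{lem:Iboundfull} and the boundedness of $N_M'$. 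This gives an inequality of the schematic form
\[
 \partial_t z_v - \eps\,\partial^2_{xx} z_v - \partial^2_{vv} z_v - A\cdot \nabla z_v \;\leq\; \tfrac{2}{\eps}\bigl(z_v - 2 z_v^2\bigr) + C_M,
\]
for a suitable drift $A = A(t,x,v,\nabla w_\eps)$, with $C_M$ independent of $\eps$ and $T$.

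The last step is a comparison argument with the explicit super-solution $\bar z_v(t) = \bigl(\sqrt{\eps/(2t)} + \theta(M)\bigr)^2$, where $\theta(M)$ is chosen large enough to absorb $C_M$ via the cubic damping. The ODE part $\dot z + \eps^{-1}(4z^2 - 2z) = 0$ is solved exactly by $\eps/(2t)$, and the additive constant $\theta(M)$ produces an extra negative reserve $-4\theta(M)^2 z_v / \eps$ that dominates $C_M$ for $\theta(M)$ large. The parabolic maximum principle, which automatically handles the initial data because $\bar z_v \to +\infty$ as $t\to 0^+$, then yields $z_v \leq \bar z_v$, i.e.\ the claimed bound on $|\partial_v w_\eps|$. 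The estimate on $|\partial_x w_\eps|$ follows the same scheme applied to $z_x := (\partial_x w_\eps)^2$, but now without an $\eps^{-1}$-source (the coupling term is $v$-dependent only) and with a cubic damping $-4 z_x^2$ of order one, producing the slower decay rate $\sqrt{1/(2t)} + \theta(M)$.

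The main technical obstacle is the appearance of mixed second derivatives $\partial^2_{xv} w_\eps$ and cross products such as $\partial^2_{xv} w_\eps \cdot \partial_v w_\eps$ when differentiating the equation for $w_\eps$; these do not fit a priori into a scalar parabolic inequality for $z_v$ or $z_x$ alone. The natural remedy is either to work jointly with $|\nabla w_\eps|^2 = z_v + z_x$, or to absorb the cross-contributions via Cauchy--Schwarz against the dominant cubic damping, at the cost of enlarging $\theta(M)$. A secondary point to verify is that $\theta(M)$ depends only on $M$, not on $\eps\in(0,1)$ or $T>0$: this relies essentially on $w_\eps \geq E$ uniformly and on the $\eps$-independent bounds of Lemma~\ref{lem:Iboundfull} for $\JJ[g_\eps]$ and its time derivative.
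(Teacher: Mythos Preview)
Your approach is essentially the paper's: Bernstein's method applied after the same change of unknown $\varphi_\eps^M = 2E^2 - w_\eps^2$, followed by a comparison with an explicit supersolution blowing up at $t=0$. The two presentations differ mainly in bookkeeping, and one point you flag as an obstacle is in fact the place where the paper's organisation is cleaner.

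\textbf{Joint versus separate gradient estimates.} You propose to bound $z_v=(\partial_v w_\eps)^2$ and $z_x=(\partial_x w_\eps)^2$ separately and only at the end mention that the mixed terms $\partial^2_{xv}w_\eps$ may force you to work with $|\nabla w_\eps|^2$. The paper does this from the start: it writes a single inequality for $|p_\eps|^2=|\partial_x w_\eps|^2+|\partial_v w_\eps|^2$, and the reason this is not merely cosmetic is that the cross contributions (those with coefficient $\Gamma(w_\eps)=-2w_\eps+\eps/w_\eps$, which is \emph{unbounded} in $w_\eps$) then combine into pure transport terms $\Gamma\,p^x\partial_x|p_\eps|^2+\eps^{-1}\Gamma\,p^v\partial_v|p_\eps|^2$ that vanish at an interior maximum. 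If you stay with $z_v$ alone and try to absorb $\Gamma\,p^x p^v\,\partial^2_{xv}w_\eps$ by Cauchy--Schwarz against $-2\eps(\partial^2_{xv}w_\eps)^2$, you pick up a factor $\Gamma^2/\eps\sim w_\eps^2/\eps$, which is not controlled globally in $(x,v)$. So your ``natural remedy'' (a) is really the right route, and it is exactly what the paper does.

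\textbf{The supersolution.} One technical slip: $z=\eps/(2t)$ does \emph{not} solve $\dot z+\eps^{-1}(4z^2-2z)=0$; a direct check gives $\dot z+\eps^{-1}(4z^2-2z)=\eps/(2t^2)-1/t$. The linear source $2\eps^{-1}z_v$ cannot be ignored at this stage---it pushes the scalar ODE toward a fixed point near $z_v=\tfrac12$, not toward zero. What saves the argument (both in your write-up and in the paper) is precisely the additive $\theta(M)$: for $\theta$ large enough (depending only on $C_M$), the function $\bar z_v(t)=(\sqrt{\eps/(2t)}+\theta)^2$ is a genuine supersolution of your differential inequality, uniformly in $\eps\in(0,1)$. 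In the paper this step is organised slightly differently: after combining everything into an inequality for $|p_\eps|^2$, one replaces $-2|p^v|^4+|p^v|^2+C(M)(1+|p^v|)$ by $-|p^v|(|p^v|-\theta)^3$ and then compares with the ODE $z'=-\eps^{-1}(z-\theta)^3$ for $z=|p^v|$, whose solution is exactly $\sqrt{\eps/(2t)}+\theta$. Either packaging works; yours just needs the correction above.
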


\begin{proof}
First, we remark that thanks to Lemma~\ref{lem:bound2phi}, $w_\eps$ is well defined, as the square root of a non-negative quantity. For an arbitrary smooth invertible map $\beta$ with non-vanishing derivative (a specific choice will be made below), define $\varphi^M_\eps(t,x,v)=\beta(w_\eps(t,x,v))$. The equation~\eqref{eq:phi} on $~\varphi^M_\eps(t,v)$ rewrites for $w_{\eps}$ as:
\begin{multline*}
\nonumber
\partial_tw_\eps
=
\frac{\left(\eps a+\eps N_M'(v)+1\right)}{\beta'(w_\eps)}
+
\big(ax-bv\big)\partial_xw_\eps
+
\left( N_M(v)+x+\eps^{-1}(v- \cris{\JJ[g_\eps]} (t))\right)\partial_vw_\eps
\\
+
\left[\beta'(w_\eps)+\eps\frac{\beta''(w_\eps)}{\beta'(w_\eps)}\right]|\partial_xw_\eps|^2+\eps\,\partial^2_{xx}w_\eps
+
\left[\eps^{-1}\beta'(w_\eps)+\frac{\beta''(w_\eps)}{\beta'(w_\eps)}\right]|\partial_vw_\eps|^2+\partial^2_{vv}w_\eps.
\end{multline*}

Define $p^v_\eps=\partial_vw_\eps$ and $p^x_\eps=\partial_xw_\eps$. To prove our result, we derive an upperbound for 
$\vert p_{\eps}\vert^{2} = \vert p^v_\eps\vert^{2} + \vert p^x_\eps\vert^{2}.$ In particular, we will control the derivative with respect to time given by
$$
\partial_t|p_\eps|^2=2 \left(p^v_\eps\, \partial_{t} p^v_\eps + p^x_\eps\, \partial_{t} p^x_\eps\right).
$$
We express each term by differentiating the above equation with respect to $v$ and $x$ respectively and rearranging the terms. 
\begin{multline*}
\partial_tp^v_\eps 
=
\frac{\eps N_M''(v)}{\beta'(w_\eps)}
-
\left(\eps a+\eps N_M'(v)+1\right)\frac{\beta''(w_\eps)}{\big|\beta'(w_\eps)\big|^2}\,p^v_\eps 
-bp^x_\eps
+
(ax-bv)\partial_xp^v_\eps
\\
+
\left( N_M'(v)+\eps^{-1}\right)p^v_\eps 
+
\left( N_M(v)+x+\eps^{-1}(v- \cris{\JJ[g_\eps]} (t))\right)\partial_vp^v_\eps 
\\
+
\Lambda(w_{\eps}) \left( p^v_\eps\, |p^x_\eps|^2 + \eps^{-1}  \big(p^v_\eps\big) ^3 \right) 
+ \Gamma(w_{\eps}) \left(\partial_{v} \vert p^x_\eps\vert^{2} +  \eps^{-1} \partial_v \vert p^v_\eps\vert^{2}\right)
+
\eps\,\partial^2_{xx}p^v_\eps 
+
\partial^2_{vv}p^v_\eps.
\end{multline*}

with
\[
\begin{cases}
\Lambda(w_{\eps})=\left[\beta''(w_\eps)+\eps\frac{\beta'''(w_\eps)}{\beta'(w_\eps)}-
\eps\left \vert\frac{\beta''(w_\eps)}{ \beta'(w_\eps)}\right \vert^2\right]
\\[10pt]
\Gamma(w_{\eps})=\left[\beta'(w_\eps)+\eps\frac{\beta''(w_\eps)}{\beta'(w_\eps)}\right]
\end{cases}
\]

Similarly, differentiating with respect to $x$, we obtain:

\begin{multline*}
\partial_tp^x_\eps
=
-
\left(\eps a+\eps N_M'(v)+1\right)\frac{\beta''(w_\eps)}{\big|\beta'(w_\eps)\big|^2}\,p^x_\eps
+ap^x_\eps
+
(ax-bv)\partial_xp^x_\eps 
\\
+
p_\eps^v
+
\left( N_M(v)+x+\eps^{-1}(v- \cris{\JJ[g_\eps]} (t))\right)\partial_vp^x_\eps 
\\
+
\Lambda(w_{\eps}) \left( \big(p^x_\eps\big)^3 + \eps^{-1}  p^x_\eps\, |p^v_\eps|^2 \right) 
+
\Gamma(w_{\eps})\left( \partial_x \vert p^x_\eps\vert^{2} + \eps^{-1} \partial_{x} \vert p^v_\eps\vert^{2}\right)
+
\eps\,\partial^2_{xx}p^x_\eps 
+
 \partial^2_{vv}p^x_\eps.
\end{multline*}
We thus obtain:
\begin{multline}
\label{eq:cacho}
\partial_t\frac{|p_\eps|^2}2
=
-
\left(\eps a+\eps N_M'(v)+1\right)\frac{\beta''(w_\eps)}{\big|\beta'(w_\eps)\big|^2}|p_\eps|^2
+
\frac 1 2 (ax-bv)\,\partial_x|p_\eps|^2
\\
+
\frac 1 2 \left( N_M(v)+x+\eps^{-1} (v- \cris{\JJ[g_\eps]} (t))\right)\partial_v|p_\eps|^2
\\
+
\left( N_M'(v)+ \eps^{-1} \right)|p^v_\eps|^2
+
 p^v_\eps\,\big(\partial^2_{vv}p^v_\eps+\eps\partial^2_{xx}p^v_\eps \big)
+
\frac{\eps N_M''(v)}{\beta'(w_\eps)}p^v_\eps\\
+
a|p^x_\eps|^2
+
 p^x_\eps\,\big( \partial^2_{vv}p^x_\eps+\eps\,\partial^2_{xx}p^x_\eps\big)
+
(1-b)\,p^v_\eps \, p^x_\eps
\\
+
\Lambda(w_\eps) \left(|p^v_\eps |^2\, |p^x_\eps|^2+ \big|p^x_\eps\big|^4 + \frac 1 \eps \left(\big|p^v_\eps\big|^4+|p^x_\eps|^2\,|p^v_\eps|^2\right)
\right)\\
+2\Gamma(w_{\eps})\left(p^v_\eps\, p^x_\eps \,\partial_xp^v_\eps +p^x_\eps\,p^x_\eps \,\partial_xp^x_\eps + \frac 1 \eps \left(p^v_\eps\, p^v_\eps \,\partial_vp^v_\eps +p^x_\eps\,p^v_\eps \,\partial_vp^x_\eps\right)
\right).
\end{multline}
By defining the operator $\Delta^\eps := \partial^2_{vv}+\eps\partial^2_{xx},$
equation~\eqref{eq:cacho} can be re-expressed as:
\begin{multline}
\label{eq:cacho2}
\partial_t\frac{|p_\eps|^2}2-\sum_{i=x,v} p^i_\eps\big(\Delta^\eps p^i_\eps\big)
=
-
\left(\eps a+\eps N_M'(v)+1\right)\frac{\beta''(w_\eps)}{\big|\beta'(w_\eps)\big|^2}|p_\eps|^2
+
(ax-bv)\partial_x\frac{|p_\eps|^2}2\\
+
\left( N_M(v)+x+\eps^{-1}(v- \cris{\JJ[g_\eps]} (t))\right)\partial_v\frac{|p_\eps|^2}2
\\
+
\left( N_M'(v)+\eps^{-1}\right)|p^v_\eps|^2
+
\frac{\eps N_M''(v)}{\beta'(w_\eps)}p^v_\eps
+ 
a|p_\eps^x|^2
+
(1-b)\,p^v_\eps\, p^x_\eps
\\
+
\Lambda(w_{\eps})\,\left(|p_\eps |^2\, |p^x_\eps|^2+\frac 1 \eps \big|p_\eps\big|^2|p_\eps^v|^2\right)
+\Gamma(w_\eps) \,\left(p^x_\eps \,\partial_x|p_\eps|^2 +\frac 1 \eps p^v_\eps \,\partial_v|p_\eps|^2\right).
\end{multline}

\noindent We now specify $\beta(w)=-w^2+2E^2$ as in the statement of the proposition. This map is smooth and smoothly invertible for $w \geq E$, and we have for such arguments:
$$
\frac{1}{|\beta'(w_\eps)|}\leq \frac1{2E},\qquad \frac{|\beta''(w_\eps)|^2}{|\beta'(w_\eps)|^2}=\frac{1}{w_\eps^2}\leq \frac{1}{E^2},\qquad\frac{|\beta''(w_\eps)|}{|\beta'(w_\eps)|^2}=\frac{1}{2w_\eps^2}\leq\frac{1}{2E^2},
$$
and $\beta'''(w_\eps)=0$. Moreover, independently of the value of $\eps$ it holds that
\[\Lambda(w_{\eps}) = -2-\frac{\eps}{w_\eps^{2}} \leq -2\]
and thus we obtain the following upper-bound for the term in $\Lambda$ in equation~\eqref{eq:cacho2}
\[
\Lambda(w_{\eps})\,\left(|p_\eps |^2\, |p^x_\eps|^2+\frac 1 \eps \big|p_\eps\big|^2|p_\eps^v|^2\right) \leq -2\left(|p_\eps^x|^4+\frac 1 \eps|p_\eps^v|^4\right).
\]

Moreover, since the truncated drift $N_M$ was constructed so that both the first and one-sided second derivatives are bounded, so for $w\geq E$ and $\eps$ sufficiently small, there exists a constant $C(M)$ independent of $\eps\leq1$ such that
\begin{multline*}
\Bigg \vert
\left(\eps a+\eps N_M'(v)+1\right)\frac{\beta''(w_\eps)}{\big|\beta'(w_\eps)\big|^2}|p_\eps|^{2}
\\
+
 \left( N_M'(v)+\eps^{-1}\right)|p_\eps^{v}|^{2}
+
\frac{\eps N_M''(v)}{\beta'(w_\eps)} p_{\eps}^{v}
+ 
a\,|p_\eps^x|^{2}
+
(1-b)p^v_\eps p^x_\eps
 \Bigg\vert
\\
\leq 
\left(
\frac{\eps a+\eps |N_M'(v)|+1}{2E^2}
+
 |N_M'(v)|
+
a+|1-b|
\right)\,| p_\eps|^2
+
\frac{\eps |N_M''(v)|}{2E}\,|p_\eps^v|
+
\eps^{-1}|p_\eps^v|^2
\\
\leq C(M) | p_{\eps}|^2+C(M)\,|p_\eps^v|+\eps^{-1}|p_\eps^v|^2.
\end{multline*}

By taking $C(M)$ larger if necessary, previous calculation leads to
\begin{multline}
\frac 1 2 \partial_t|p_\eps|^{2}-\sum_{i=x,v} p^i_\eps\big(\Delta^\eps p^i_\eps\big)
\\
\leq
 -2|p_\eps^x|\left(|p_\eps^x|^3+C(M)(1+|p_\eps^x|)\right)-2\eps^{-1}|p_\eps^v|\left(|p_\eps^v|^3+C(M)(1+|p_\eps^v|)\right)
\\
+ 
(ax-bv)\partial_x\frac{|p_\eps|^{2}}{2}
+
\left( N_M(v)+x+\eps^{-1}(v- \cris{\JJ[g_\eps]} (t))\right)\partial_v\frac{|p_\eps|^{2}}{2}
\\
+
\Gamma(w_\eps) \,\left(p^x_\eps \,\partial_x|p_\eps|^2 +\frac 1 \eps p^v_\eps \,\partial_v|p_\eps|^2\right).
\label{eq:subsol}
\end{multline}
On the other hand, for any $C$, there exists $\theta$ such that for any $x>0$, we have $(x-\theta)^{3}-2 x^{3}+C(1+x)<0$, we can find $\theta(M)$ independent of \john{$\eps\in(0,1)$ and $T>0$} such that:
\[(\vert x\vert-\theta(M))^{3}- 2 \vert x\vert^{3}+C(M)(1+\vert x\vert) \leq 0.\]
and we conclude that:
\begin{multline*}
\frac 1 2 \partial_t|p_\eps|^{2}-\sum_{i=x,v} p^i_\eps\big(\Delta^\eps p^i_\eps\big)
\leq 
-\vert p_{\eps}^x\vert \left(\vert p_{\eps}^x\vert-\theta(M)\right)^{3}
-\frac{\vert p_{\eps}^v\vert}{\eps} \left(\vert p_{\eps}^v\vert-\theta(M)\right)^{3}
\\
+
(ax-bv)\partial_x\frac{|p_\eps|^{2}}{2}
+
\left( N_M(v)+x+\eps^{-1}(v- \cris{\JJ[g_\eps]} (t))\right)\partial_v\frac{|p_\eps|^{2}}{2}
\\
+\Gamma(w_\eps) \,\left(p^x_\eps \,\partial_x|p_\eps|^2 +\frac 1 \eps p^v_\eps \,\partial_v|p_\eps|^2\right).
\end{multline*}
We can now find an upper-bound for $\vert p_{\eps}\vert$ independent of $(x,v)$ finding the positive solutions of the ordinary differential equation:
\[
z(t) = \big(z_1(t),z_2(t)\big),\quad \frac 1 2 \frac{d}{dt} |z|^{2}=-z_1 (z_1-\theta(M))^{3}-\frac{z_2}{\eps} (z_2-\theta(M))^{3}.
\]
Indeed, the map $$z(t)=\left(\sqrt{\frac{1}{2t}}+\theta(M),\sqrt{\frac{\eps}{2t}}+\theta(M)\right),$$ provides a common upper-bound with $z(0)=(+\infty,+\infty)$. Indeed, since $$(|p_\eps^x(t)|,|p_\eps^v(t)|)$$ is a sub-solution of~\eqref{eq:subsol} which is exactly solved by $z(t)$, we conclude that
$$
|p_\eps^v(t,x,v)|\leq \sqrt{\frac{\eps}{2t}}+\theta(M) ,\qquad 0<t\leq T \text{ and } (x,v)\in\R^{2}
$$
and the conclusion follows.
\end{proof}
\bigskip

\begin{lem}
\label{lem:UnifBoundphi}
 Under conditions of Proposition~\ref{prop:Repsphi}, the family $\{\varphi^{M}_\eps\}_{\john{\eps\leq 1}}$ is uniformly bounded in compact subsets of $(0,+\infty)\times\R^2$ for each fixed $M>0$.
\end{lem}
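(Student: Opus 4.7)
The upper bound is immediate: Lemma~\ref{lem:bound2phi} gives, uniformly in $\eps\in(0,1)$ and $t\in[0,T]$,
$$
\varphi_\eps^M(t,x,v)\leq -\frac{A'}{2}\big(v-\JJ[g_\eps](t)\big)^2-\frac{A'x^2}{2}+B'+DT,
$$
which is bounded above on any compact subset of $(0,+\infty)\times\R^2$. The content of the lemma is therefore the matching uniform lower bound on any compact $[t_0,T]\times\Omega\subset(0,+\infty)\times\R^2$. My plan is to combine three ingredients: exponentiating the above estimate yields a Gaussian-type upper bound on $f_\eps^M=e^{\varphi_\eps^M/\eps}$; $f_\eps^M(t,\cdot,\cdot)$ has total mass $1$ for all times, by mass conservation (writing~\eqref{eq:truncated} in divergence form) combined with hypothesis~(H); and Proposition~\ref{prop:Repsphi} furnishes uniform Lipschitz estimates on $w_\eps=\sqrt{2E^2-\varphi_\eps^M}$ in the spatial variables on any time interval $[t_0,T]\subset(0,+\infty)$.

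First, I will use the Gaussian upper bound together with Lemma~\ref{lem:Iboundfull} to localize most of the mass of $f_\eps^M$ inside a disk $D_T\subset\R^2$, centered at $(0,\JJ[g_\eps](t))$, with radius $R$ depending only on $T$ and the model parameters. A direct Gaussian-tail computation yields $\int_{\R^2\setminus D_T}f_\eps^M(t,x,v)\,dxdv\leq \tfrac12$ uniformly in $\eps\in(0,1)$ and $t\in[0,T]$, and total mass conservation then forces $\int_{D_T}f_\eps^M\geq \tfrac12$. A pigeonhole argument then produces, for each $(t,\eps)$, a point $(x^*,v^*)\in D_T$ (which may depend on $t$ and $\eps$) satisfying $f_\eps^M(t,x^*,v^*)\geq 1/(2|D_T|)$, and hence $\varphi_\eps^M(t,x^*,v^*)\geq\eps\log\bigl(1/(2|D_T|)\bigr)$. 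Since $\eps\leq 1$, this translates into $w_\eps(t,x^*,v^*)\leq C_0$ for some $C_0$ independent of $\eps\in(0,1)$.

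I will then propagate this pointwise control to all of $\Omega$ via the Lipschitz bound: for any $(x,v)\in\Omega$ and $t\in[t_0,T]$, Proposition~\ref{prop:Repsphi} yields
$$
w_\eps(t,x,v)\leq w_\eps(t,x^*,v^*)+L(t_0,M)\,\big(|x-x^*|+|v-v^*|\big),
$$
and since $(x^*,v^*)\in D_T\subset B\big(0,R+C_I\big)$ the factor $|x-x^*|+|v-v^*|$ is bounded by a constant depending only on $\Omega$, $T$ and $C_I$. This gives a uniform upper bound on $w_\eps$ on $[t_0,T]\times\Omega$, and, through $\varphi_\eps^M=2E^2-w_\eps^2$, the sought uniform lower bound on $\varphi_\eps^M$.

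The main technical obstacle will be the mass-localization step: the exponential prefactor $e^{(B'+DT)/\eps}$ produced by exponentiating Lemma~\ref{lem:bound2phi} blows up as $\eps\to0$ and must be reabsorbed into the Gaussian decay outside $D_T$ uniformly in $\eps\in(0,1)$. This can be done by splitting $-\tfrac{A'}{2}\rho^2+B'+DT \leq -\tfrac{A'}{4}\rho^2$ as soon as $\rho^2\geq 4(B'+DT)/A'$ and then choosing the radius of $D_T$ only slightly larger, but writing this cleanly requires some care so that all constants ultimately depend only on $T$ and the model parameters, not on $\eps$.
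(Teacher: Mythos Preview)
Your proposal is correct and follows essentially the same approach as the paper's proof: localize mass via the Gaussian upper bound from Lemma~\ref{lem:bound2phi}, extract a point in the resulting ball where $\varphi_\eps^M$ is bounded below, and propagate this lower bound across the compact set using the spatial Lipschitz estimate on $w_\eps$ from Proposition~\ref{prop:Repsphi}. The only cosmetic differences are that the paper centers its ball at the origin (absorbing the bounded shift $\JJ[g_\eps](t)$ into the constant $C(T)$ first) and phrases the pigeonhole step as ``there exists $(x_\eps,v_\eps)$ with $\varphi_\eps^M>-1$'' for $\eps$ small rather than your quantitative $\varphi_\eps^M\geq\eps\log\bigl(1/(2|D_T|)\bigr)$; both versions yield the same uniform control on $w_\eps$ at the anchor point.
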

\smallskip
\begin{proof}
 We already know from Lemma~\ref{lem:bound2phi}, that $\varphi^M_\eps$ is locally upper-bounded. We show now that for all $R$ and all $0\leq t<T$ fixed, it is also lower-bounded on $[t,T]\times B_R(0)$. First, using the upper bound for $\varphi^M_\eps$ and the bound for $ \cris{\JJ[g_\eps]} $,  we have that
$$
 \varphi^M_\eps\leq -\frac {A'}2\big(v- \cris{\JJ[g_\eps]} (t)\big)^2 - \frac {A'}2x^2+ B'+ DT\leq -\frac {A'}4 (x^2+v^2)+C(T),
$$ 
for all $t\in[0,T]$, for some constant $C(T)$ independent of $R$. Therefore, for $R$ large enough there is some $\eps_0$ such that for all $\eps\leq\eps_0$ such that
  $$
\int_{B_R(0)^c}f^M_\eps= \int_{B_R(0)^c} e^{\varphi^M_\eps/\eps}\leq\int_{B_R(0)^c} e^{(-\frac{A'}4(v^2+x^2)+C(T))/\eps_0}dv<\frac12.
 $$
On the other hand, thanks to mass conservation we get that
 $$
 1=\int_{B_R(0)} e^{\varphi^M_\eps/\eps}+\int_{B_R(0)^c} e^{\varphi^M_\eps/\eps}\quad\Rightarrow\quad\int_{B_R(0)} e^{\varphi^M_\eps/\eps}\geq\frac12.
 $$
  From here, we deduce that there is some $\eps_1$ such that for all $\eps<\eps_1$
 $$
 \exists\, (x_\eps,v_\eps)\in B_R(0),\qquad \varphi^M_\eps(t,x_\eps,v_\eps)>-1,\qquad w_\eps(t,x_\eps,v_\eps)<\sqrt{2E^2+1}.
 $$
 Moreover, thanks to Proposition~\ref{prop:Repsphi} we have that $|\nabla w_\eps|$ is bounded on $[t,T]\times B_R(0)$, then we get that
 $$
 |w_\eps(t,x+h_1,v+h_2)-w_\eps(t,x,v)|\leq\Big(\sqrt{\frac 1 {2t}}+\theta(M)\Big)|h|,\quad h=(h_1,h_2),
 $$
 thus for all $(x,v)\in B_R(0)$, all $0<t<T$ and all $\eps<1$
 $$
  w_\eps < C(t,T,R):=\sqrt{2E^2+1}+2\Big(\sqrt{\frac {1} {2t}}+\theta(M)\Big)R:=C(t,T,R),
 $$
 and from the definition of $w_\eps$ we finally get that
 $$
 \varphi^M_\eps(s,x,v)>2E^2-C(t,T,R)^2>-\infty,\qquad \forall (s,x,v)\in[t,T]\times B_R(0).
 $$
\end{proof}

We finally state a result on the regularizing effect of the equation in time, which will allow taking the limit for any $M$ fixed in compact subsets of $(0,+\infty)\times\R^2$. 
\smallskip
\begin{lem}
\label{lem:Rtimephi}
For all $\eta>0$, $R>0$ and $t_0>0$, there exists a positive constant $\Theta$ such that for all $(t,s,(x,v))\in[t_0,T]\times[t_0,T]\times B_{R/2}(0)$ such that $0<t-s<\Theta$, and for all $\eps\leq1$ we have
$$
|\varphi^M_\eps(t,x,v)-\varphi^M_\eps(s,x,v)|\leq 2\eta.
$$
\end{lem}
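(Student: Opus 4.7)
The plan is to derive time equicontinuity of $\varphi^M_\eps$ by a classical super-/sub-solution comparison argument in the spirit of Lemma~\ref{lem:bound2phi}, leveraging the uniform spatial Lipschitz regularity obtained in Proposition~\ref{prop:Repsphi}. Combining Proposition~\ref{prop:Repsphi} applied on $[t_0/2,T]$, the local uniform bound on $w_\eps$ given by Lemma~\ref{lem:UnifBoundphi}, and the identity $\nabla_{x,v}\varphi^M_\eps=-2w_\eps\nabla_{x,v}w_\eps$, one first extracts a Lipschitz constant $L=L(M,R,t_0,T)$ such that $|\nabla_{x,v}\varphi^M_\eps(\tau,x,v)|\le L$ for every $(\tau,x,v)\in[t_0,T]\times\overline{B_R(0)}$ and every $\eps\in(0,1)$.

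Next, fix $s\in[t_0,T]$ and $(x_0,v_0)\in B_{R/2}(0)$. I plan to construct a super-solution $\Xi^+$ (a sub-solution $\Xi^-$ follows by an entirely symmetric argument) to equation~\eqref{eq:phi} of the form
$$
\Xi^+(\tau,x,v)=\varphi^M_\eps(s,x_0,v_0)+\eta+K(\tau-s)+\frac{1}{\delta}\big[(x-x_0)^2+(v-v_0)^2\big]+\Phi_\eps(\tau,x,v),
$$
with the local quadratic penalty of scale $\delta$ of order $\eta/L^2$ and a global correction $\Phi_\eps$ of the type $-\frac{A'}{2}\big[(v-\JJ[g_\eps](\tau))^2+x^2\big]$ plus a suitable normalization. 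The local term ensures, via the spatial Lipschitz bound, that $\Xi^+(s,\cdot,\cdot)\ge\varphi^M_\eps(s,\cdot,\cdot)$ on a neighborhood of $(x_0,v_0)$, while the global term handles the complement through the Gaussian upper bound of Lemma~\ref{lem:bound2phi}.

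The central step is the verification of the super-solution inequality $\LL_\eps^M\Xi^+\ge 0$. The potentially divergent contribution $\eps^{-1}\big[(v-\JJ[g_\eps](\tau))\partial_v\Xi^++|\partial_v\Xi^+|^2\big]$ splits into a part coming from $\Phi_\eps$ and one from the local quadratic. The first is controlled by the same algebraic identity as in the proof of Lemma~\ref{lem:bound2phi}: with $A'\le 1$, the choice $\partial_v\Phi_\eps=-A'(v-\JJ[g_\eps](\tau))$ makes this contribution non-positive, hence favorable. The second, which \emph{a priori} introduces an $O(\eps^{-1})$ piece through its $v$-gradient $2(v-v_0)/\delta$, is tamed by restricting the comparison to maximum points of $\varphi^M_\eps-\Xi^+$: at such points, gradient matching combined with $|\nabla\varphi^M_\eps|\le L$ forces $|v-v_0|$ to be of order $L\delta$, so that $(v-v_0)/\delta$ stays uniformly bounded. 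The remaining contributions (notably the time derivative of $\Phi_\eps$, controlled via the Lipschitz estimate on $\JJ[g_\eps]$ from Lemma~\ref{lem:Iboundfull}) are bounded and absorbed by taking $K$ large enough, uniformly in $\eps\le 1$. The classical comparison principle then yields $\Xi^-(t,x_0,v_0)\le\varphi^M_\eps(t,x_0,v_0)\le\Xi^+(t,x_0,v_0)$, and evaluating at $(x_0,v_0)$ while choosing $\Theta\le\eta/K$ concludes.

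I expect the main obstacle to be the reconciliation of three competing requirements on the corrections defining $\Xi^\pm$: (i) dominating the spatial variation of $\varphi^M_\eps(s,\cdot,\cdot)$ at the initial time, (ii) preserving the sign structure needed to absorb the $\eps^{-1}$ divergence, and (iii) remaining compatible at critical points with the spatial Lipschitz bound on $\varphi^M_\eps$. The scheme above addresses each through a distinct component of the correction, but a careful bookkeeping of constants, all independent of $\eps$, is required to ensure that $\Theta$ itself is truly uniform in $\eps\in(0,1)$.
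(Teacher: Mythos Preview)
Your approach has a genuine gap in the handling of the $\eps^{-1}$ terms. You argue that at a maximum point of $\varphi^M_\eps-\Xi^+$, gradient matching together with $|\nabla\varphi^M_\eps|\le L$ forces $(v-v_0)/\delta$ to be bounded, so that $\partial_v\Xi^+$ is $O(1)$ there. This is correct but insufficient: the contribution
\[
-\eps^{-1}(v-\JJ[g_\eps])\,\partial_v\Xi^+ \;-\; \eps^{-1}|\partial_v\Xi^+|^2
\]
is then only $O(\eps^{-1})$, not $O(1)$, and its sign is indefinite. To contradict $\LL_\eps^M\Xi^+\le 0$ at the contact point you would therefore need $K\gtrsim \eps^{-1}$, making $\Theta=\eta/K$ depend on $\eps$ and collapse to $0$ as $\eps\to 0$. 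The favorable sign you extract from the $\Phi_\eps$ part in isolation is destroyed by the cross terms with the local quadratic penalty once the two contributions are combined inside $|\partial_v\Xi^+|^2$.

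The paper circumvents this by a different choice of penalty in the $v$-variable: instead of a quadratic $(v-v_0)^2/\delta$, it uses $K e^{\eps|v-w|^2}$, whose $w$-derivative $2K\eps(w-v)e^{\eps(w-v)^2}$ carries an explicit factor of $\eps$. On the bounded set $B_R(0)$ this yields
\[
-\eps^{-1}(w-\JJ[g_\eps])\,\partial_w\phi_\eps-\eps^{-1}|\partial_w\phi_\eps|^2 = O(1)+O(\eps)
\]
uniformly in $\eps\le 1$, so a single $\eps$-independent constant $C$ makes $\phi_\eps$ a super-solution everywhere on $[s,T]\times B_R(0)$. No contact-point reduction and no global correction $\Phi_\eps$ are needed; the boundary and initial-time inequalities follow directly from the uniform $L^\infty$ bound of Lemma~\ref{lem:UnifBoundphi}.
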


\begin{proof}
The proof follows the same method used in~\cite[Lemma 9.1]{barles2002geometrical} (see also~\cite{barles2009concentration,benachour2009sharp} for other applications of these method). Here, we need to be careful about the constants we choose, due to the presence of a diverging term in $\eps^{-1}$. 

For any $\eta>0$, we need to find some positive constants $K$ and $C$ such that for any $(x,v)\in B_{R/2}(0)$, $s\in[t_0,T]$ and $\eps<\eps_0$, we have:
$$
\varphi^M_\eps(t,y,w)-\varphi^M_\eps(s,x,v)\leq\eta+K|x-y|^2+Ke^{\eps|v-w|^2}+C(t-s)
$$
and
$$
\varphi^M_\eps(t,y,w)-\varphi^M_\eps(s,x,v)\geq -\eta-K|x-y|^2-Ke^{\eps|v-w|^2}-C(t-s)
$$
for any $(y,w)\in B_{R/2}(0)$, $t\in[s,T]$. Both inequalities are proved in an analogous manner; we only deal with the first one. Fix some $s\in[t_0,T]$ and $(x,v)\in B_{R/2}(0)$, and define
$$
\phi_\eps(t,y,w)=\varphi^M_\eps(s,x,v)+\eta+K|x-y|^2+Ke^{\eps|v-w|^2}+C(t-s),
$$
which is well defined for $t\in[s,T)$ and $(y,w)\in B_R(0)$. Both positive constants $K,C$ will be specified later. According to Lemma~\ref{lem:UnifBoundphi}, $\varphi^M_\eps$ is locally uniformly bounded, so there is some constant $K$ large enough such that for all $\eps\leq\eps_0$,
$$
K\geq 2\|\varphi^M_\eps\|_{L^\infty([t_0,T]\times B_R(0))}.
$$
Since $e^{\eps|v-w|^2}\geq1$, the choice of $K$ allows us to write for any $(y,w)\in\partial B_{R}(0)$ that
$$
 \phi_\eps(t,y,w) \geq  \varphi^M_\eps(t,y,w)-2\|\varphi^M_\eps\|_{L^\infty([t_0,T]\times B_R(0))}+\eta+Ke^{\eps|v-w|^2}>\varphi^M_\eps(t,y,w),
$$
for all $\eta,C$. Next, we prove that taking $K$ large enough
\begin{equation}
\label{eq:auxiliarX}
\phi_\eps(s,y,w)>\varphi^M_\eps(s,y,w),\quad\text{ for all }(y,w)\in B_R(0).
\end{equation}
Indeed, if this was not the case, there would exists $\eta>0$ such that for all constants $K$ we can find $(y_{K,\eps},w_{K,\eps})\in B_R(0)$ such that
\begin{equation}
\label{eq:auxcon}
\varphi^M_\eps(s,y_{K,\eps},w_{K,\eps})-\varphi^M_\eps(s,x,v)\geq\eta+K|y_{K,\eps}-x|^2+Ke^{\eps|w_{K,\eps}-v|^2},
\end{equation}
thus
$$
1\leq 2K^{-1}\|\varphi^M_\eps\|_{L^\infty([t_0,T]\times B_R(0))},
$$
which cannot be true for $K$ large enough. Therefore for all $\eta>0$, inequality~\eqref{eq:auxiliarX} holds true, as we claimed.

Finally, notice that $\partial_t\phi_\eps=C$ and that
$$
 \partial_w\phi_\eps=2K\eps(w-v)e^{\eps(w-v)^2},\quad \partial^2_{ww}\phi_\eps = 2K\eps e^{\eps(w-v)^2}+4K^2\eps^2(w-v)^2e^{\eps(w-v)^2},
$$
and finally
$$
 \partial_y\phi_\eps = 2K(y-x),\quad \partial^2_{yy}\phi_\eps = 2K.
$$
All previous quantities are bounded by constants depending on the parameters of the system and $R$, and not on $\eps$ as soon as $\eps\leq\eps_0$. Moreover
\begin{multline*}
\partial_t\phi_\eps
-
\left(\eps a+\eps N_M'(w)+1\right)
-
(ay-bw)\partial_y\phi_\eps
\\
-
\left( N_M(w)+y+\eps^{-1}(w- \cris{\JJ[g_\eps]} (t))\right)\partial_w\phi_\eps
-
|\partial_y\phi_\eps|^2-\eps\partial^2_{yy}\phi_\eps
\\
-
\eps^{-1}|\partial_w\phi_\eps|^2-\partial^2_{ww}\phi_\eps
\geq C-C(\eps_0,C_I,R,K)
\end{multline*}
which is nonnegative if $C$ is large enough. In fact, the most delicate terms are the ones of order $\eps^{-1}$, but using the exponential part of $\phi_\eps$ we find that
$$
-\eps^{-1}(w- \cris{\JJ[g_\eps]} (t))\partial_w\phi_\eps-\eps^{-1}|\partial_w\phi_\eps|^2\sim O(1+\eps),
$$
thus depending only on $\eps_0$. Thus, $C$ can be determined depending only on the parameters of the system, $R$ and $\eps_0$. We finish by noticing that $\phi_\eps$ is a super-solution to the equation solved by $\varphi^M_\eps$ on $[s,T]\times B_R(0)$ implying that
$$
 \varphi^M_\eps(t,y,w)\leq\phi_\eps(t,y,w)=\varphi^M_\eps(s,x,v)+\eta+K|x-y|^2+Ke^{\eps|v-w|^2}+C(t-s),
$$
for all $t\in[s,T)$ and $(y,w)\in B_R(0)$. 
In a similar way, we can prove that
$$
\varphi^M_\eps(t,y,w)\geq \varphi^M_\eps(s,x,v)-\eta-K|x-y|^2-Ke^{\eps|v-w|^2}-C(t-s).
$$
To get the conclusion, take $y=x$ and $w=v$, then thanks to the previous inequalities we get that
$$
 |\varphi^M_\eps(t,x,v)- \varphi^M_\eps(s,x,v)|\leq \eta+K+C(t-s)\leq \eta+K+C\Theta,
$$
thus by taking $\Theta<(\eta+K)/C$ the result follows.
\end{proof}

\section{The large coupling limit}
\label{sec:rel}
The above results and bounds derived now allow proving the main convergence result for the truncated and non-truncated systems, and characterizing the limits of $f_{\eps}^{M}$ and $g_{\eps}$ as $\eps\to 0$ (Theorems~\ref{theo:main} and~\ref{theo:main2}). 

\subsection{Asymptotic behavior of the truncated equation}
\label{sec:asymptotic}

Using the regularity results previously demonstrated, we show the existence and characterize the limit of the families $\{\varphi^{M}_\eps\}_\eps$ and $\{f^{M}_\eps\}_\eps$ as $\eps$ is going to 0, as stated in Theorem~\ref{theo:main}. The technique again follows a somewhat classical methodology and the interested reader can find more details in~\cite{barles2009concentration} and references therein. 

\begin{proof}[Proof of Theorem~\ref{theo:main}]
We demonstrate the local convergence both in time and space, i.e. in the compact $\CC_R = [t_0,t]\times B_R(0)$ for $0<t_0<t$ and $R$ positive.

\subsection*{Step 1 (Limit)} According to Lemmas~\ref{lem:UnifBoundphi}  and~\ref{lem:Rtimephi} and Proposition~\ref{prop:Repsphi}, $\varphi^M_\eps$ are uniformly bounded and uniformly continuous in $\CC_R$. So by Arzela-Ascoli Theorem, along subsequences, $\varphi^M_\eps$ converges locally uniformly to a continuous function $\varphi^M$. 

\subsection*{Step 2 (Maximum constraint)} We now characterize this limit. First of all, we show that $\varphi^{M}$ is not lower-bounded by a strictly positive quantity. Indeed, if for some $t,x,v$ we have that $0<a\leq\varphi^{M}(t,x,v)$. By continuity of $\varphi^{M}$ there is some small $r$ such that $\varphi^{M}(t,y,w)\geq a/2$ for all $(y,w)\in B_r(x,v)$. In consequence, for $\eps$ small enough, we would have:
$$
1\geq\int_{B_r(x,v)} e^{\varphi^M_\eps(t,y,w)/\eps}dy\,dw\geq \pi r^2e^{\frac a{2\eps}}
$$
leading to a contradiction for $\eps$ small. Therefore, $\varphi^{M}(t,x,v)$ is cannot be lower-bounded by a strictly positive constant. We conclude by using again the convergence in $\CC_R$. To prove that $\max_{(x,v)\in\R^2}\varphi^{M}(t,x,v)=0$, it suffices to show that $\lim_{\eps\rightarrow0}f^M_\eps(t,x,v)\neq0$ for some $(x,v)$. From Lemma~\ref{lem:bound2phi} we have that
$$
\varphi^M_\eps(t,x,v)\leq- \frac A2\big(v- \cris{\JJ[g_\eps]} (t)\big)^2-\frac A2 x^2+ B+ DT\leq -\frac A4 (x^2+v^2)+\frac{C(T)}4,
$$
some constant $C(T)$. For $R$ large, thanks to uniform convergence, we have that
\begin{multline*}
\int_{\R^2\setminus B_R(0)}e^{\varphi^{M}(t,x,v)/\eps}\,dx\,dv= \lim_{\eps\rightarrow0}\int_{\R^2\setminus B_R(0)}f_\eps^{M}(t,x,v)\,dx\,dv
\\
\leq\lim_{\eps\rightarrow0}\int_{\R^2\setminus B_R(0)}\exp\left(-\frac{Av^2-C(T)}{4\eps}\right)dv=0.
\end{multline*}
Then, we can now focus only for $(x,v)\in B_R(0)$. In particular, from mass conservation, we have that
$$
1=\lim_{\eps\rightarrow0}\int_{B_R(0)}f^M_\eps(t,x,v)\,dx\,dv
$$
which cannot be true if $\varphi^M_\eps(t,x,v)<0$ inside the whole subset. It follows that for any $t\geq t_0$ there is some $(x,v)\in B_R(0)$ such that $\varphi^{M}(t,x,v)=0$.

\subsection*{Step 3 (Characterization of the support of the limit)} Fix some $t'\in(0,1)$ and assume that for some $(x',v')\in\R^2$ we have that $\varphi^M(t',x',v')=-a<0$. From uniform continuity in any compact $\CC_M$ containing $(t',x',v')$,  we can find a small neighborhood of $(x',v')$ such that 
$$
 \varphi^M_\eps(t,x,v)\leq-\frac a2<0,\qquad \text{ for all }t\in[t'-\delta,t'+\delta]\text{ and }(x,v)\in B_\delta(x',v'),
$$
for all $\eps\leq\eps_0$ small. Thus
$$
 \int_{B_\delta(x',v')}f^{M}(t,dx,dv) =\lim_{\eps\rightarrow0} \int_{B_\delta(x',v')}e^{\varphi^M_\eps(t,x,v)/\eps}\,dx\,dv = 0,
$$
therefore $\supp f^M(t,\cdot)\subset\{\varphi^M(t,\cdot)=0\}$ for almost every $t$.

\subsection*{step 4 (Limit equation)} \johnNew{Let us now consider $\alpha(t)$ and adherent function to the sequence $\JJ[g_\eps]$, and denote $(\eps_n)_{n\in \N}$ the associated extracted subsequence. Rewriting equation~\eqref{eq:phi} on that subsequence, we have:}
\begin{multline}
\label{eq:hct2}
\eps_n\, \partial_t\varphi^M_{\eps_n} = 
{\eps_n} \left({\eps_n} a+{\eps_n} N_M'(v)+1\right)
+
{\eps_n} (ax-bv)\partial_x\varphi^M_{\eps_n}
\\
+
\left( {\eps_n} N_M(v)+{\eps_n} x+(v- \cris{\JJ[g_{\eps_n}]} (t))\right)\partial_v\varphi^M_{\eps_n}
\\
+
{\eps_n}|\partial_x\varphi^M_{\eps_n}|^2
+
{\eps_n}^2\partial^2_{xx}\varphi^M_{\eps_n}
+
|\partial_v\varphi^M_{\eps_n}|^2
+
{\eps_n}\partial^2_{vv}\varphi^M_{\eps_n},
\end{multline}
Define $H_\eps\in C(\R^2\times\R^2)$ by
\begin{multline*}
 H_\eps(x,v,p,q) =\eps \left(\eps a+\eps N_M'(v)+1\right)
+
\eps (ax-bv)p
\\
+
\left( \eps N_M(v)+\eps x+(v- \cris{\JJ[g_\eps]} (t))\right)q
+
\eps p^2
+
q^2
\end{multline*}
which, as $\eps$ vanishes along the sequence $({\eps_n})$, converges locally uniformly towards
$$
 H(v,p) = \left( v-\alpha(t)\right)q+q^2,
$$
since $ \cris{\JJ[g_{\eps_n}]} (t)$ converges uniformly towards $\alpha(t)$ on any interval $[0,T]$ with $T>0$ \johnNew{(up to the extraction of a subsequence)}. Using that $\varphi^M_{\eps_n}$ converges also locally uniformly towards $\varphi^M$ in $(0,+\infty)\times\R^2$ is sufficient to get this conclusion. Indeed, take $(t',x',v')\in(0,+\infty)\times\R^2$ and $\phi\in C^2_b(\R_+\times\R^2)$ a regular test function, such that $(t',x',v')$ is a maximum for $\phi(t,x,v)-\varphi^M(t,x,v)$. Take some subsequences $\varphi^M_\eps$, $\phi_\eps$, $t_\eps$ and $(x_\eps,v_\eps)$ such that the same properties hold, thus at the local minima $(t_\eps,x_\eps,v_\eps)$ it holds that
\begin{multline*}
\eps\, \partial_t\varphi^M_\eps 
-
\eps (ax-bv)\partial_x\varphi^M_\eps
-
\left( \eps N_M(v)+\eps x+(v- \cris{\JJ[g_\eps]} (t))\right)\partial_v\varphi^M_\eps
\\
-
\eps|\partial_x\varphi^M_\eps|^2
-
\eps^2\partial^2_{xx}\varphi^M_\eps
-
|\partial_v\varphi^M_\eps|^2
-
\eps\partial^2_{vv}\varphi^M_\eps
\leq 
\eps \left(\eps a+\eps N_M'(v)+1\right).
\end{multline*}
Using the fact that \johnNew{$\john{\mathcal{J}[g_{\eps_n}]}(t)\rightarrow \alpha(t)$ a.e. $t\geq0$ as $n\to \infty$ (and ${\eps_n}\to0$)} and the uniform local convergence, it follows that
$$
(v'-\alpha(t'))\partial_v\phi(t',v')
-
|\partial_v\phi(t',v')|^2
\leq
0,
$$
therefore, $\varphi^M$ is a sub-solution to~\eqref{eq:limit1}. By a similar argument we can prove that $\varphi^M$ is a super-solution to the same equation and then we have shown that $\varphi^M$ is indeed a viscosity solution to~\eqref{eq:limit1}.
\end{proof}

\subsection{Relaxing the truncation assumption}\label{sec:relationship}

We now build upon these results to show that we can relax the truncation and find an analogous result for the non-truncated system, as stated in theorem~\ref{theo:main2}. So far, we know that $ \cris{\JJ[g_\eps]} (t)$ converges locally uniformly to some function $\alpha(t)$ {along subsequences}, this function will be determined later in section~\ref{sec:indentification}. Thanks to the previous section, we also know that for each $M$, the {subsequence} $\varphi_\eps^{M}$ converges locally towards a continuous function $\varphi^M$ viscosity solution to
  \begin{equation}
\nonumber 
0=(v-\alpha(t))\partial_v\varphi^M+|\partial_v\varphi^M|^2,\quad \max_{(x,v)\in\R^2}\varphi^M(t,x,v)=0,\quad \text{ for all }t\in(0,T).
\end{equation}
Now, take two truncation parameters $M, K$. Summarizing our results, we can define $f^M$ and $f^K$ as the weak limits related to $\varphi^M$ and $\varphi^K$ respectively. \

\begin{lem}
\label{lem:Reps}
There exists some positive constant $M$ large enough, such that for any regular test function $\phi\in C_c^\infty(\R^2)$:
$$
 \Big|\int_{\R^2} f^{M}_\eps(t,x,v)\phi(x,v)-\int_{\R^{2}} f^{K}_\eps(t,x,v)\phi(x,v) \Big|\rightarrow 0,
  $$
for all $K>M$ when $\eps\rightarrow0$. \john{Moreover, we have
\[ \lim_{\eps\to 0} \Big|\JJ[f^{M}_\eps(t)]-\JJ[f^{K}_\eps(t)] \Big| = 0.\]}
\end{lem}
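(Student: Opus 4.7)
The plan is to exploit the concentration of both $f^M_\eps$ and $f^K_\eps$ on a common compact set where $N_M \equiv N_K \equiv N$ in order to control their difference in $L^1$. First, applying Lemma~\ref{lem:bound2phi} for $M$ larger than some threshold $M_0$, together with the uniform bound $|\JJ[g_\eps](t)| \leq C_I$ from Lemma~\ref{lem:Iboundfull}, yields
\[
\varphi^M_\eps(t,x,v) \leq -\tfrac{A'}{2}(v-\JJ[g_\eps](t))^2 - \tfrac{A'}{2}x^2 + B' + DT,
\]
and the analogous bound for $\varphi^K_\eps$. Choosing $R$ large enough in terms of $A',B',D,T,C_I$, this guarantees that the mass of $f^M_\eps$ and $f^K_\eps$ outside the ball $B_R(0)$ is dominated by $e^{-cR^2/\eps}$. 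I would then fix $M > \max(M_0, R)$, so that $N_M = N$ on $B_R(0)$, and for any $K > M$ the truncations satisfy $N_M \equiv N_K$ on $\{|v|\leq M\}$.

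Next, by linearity of~\eqref{eq:truncated}, the difference $h_\eps := f^M_\eps - f^K_\eps$ solves a Fokker--Planck equation with drift associated to $N_M$, starting from zero initial condition and with source term $S_\eps := \partial_v\{(N_M - N_K)(v) f^K_\eps\}$ supported on $\{|v| \geq M\}$. Testing against a suitable mollification of $\sign(h_\eps)$ (the divergence-form structure of the drift ensures that the $\eps^{-1}$ coefficient generates no contribution, and the diffusion term is dissipative), we obtain the $L^1$-contraction estimate
\[
\|h_\eps(t)\|_{L^1} \leq \int_0^t \|S_\eps(s)\|_{L^1}\,ds.
\]
The source $L^1$ norm is then controlled by $\int_{|v|\geq M}\bigl[|N_M' - N_K'|\, f^K_\eps + |N_M - N_K|\, |\partial_v f^K_\eps|\bigr]$. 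Writing $|\partial_v f^K_\eps| = \eps^{-1}|\partial_v \varphi^K_\eps|\, f^K_\eps$, with Proposition~\ref{prop:Repsphi} bounding $|\partial_v \varphi^K_\eps|$ by a quantity at most linear in $|v|$ for $t\geq t_0$, and using the Gaussian decay $f^K_\eps \leq e^{-cv^2/\eps}$ from Lemma~\ref{lem:bound2phi}, the Gaussian tail overwhelms all polynomial and $\eps^{-1}$ factors, leading to $\|S_\eps(s)\|_{L^1} \leq C(M,T)\, e^{-cM^2/\eps}$.

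The conclusions then follow readily: for $\phi \in C_c^\infty(\R^2)$, $\bigl|\int \phi\, h_\eps\bigr| \leq \|\phi\|_\infty \|h_\eps\|_{L^1} \to 0$. For the first moment, one splits $\int v\,h_\eps = \int_{B_R} v\,h_\eps + \int_{B_R^c} v\,h_\eps$, bounding the first piece by $R \|h_\eps\|_{L^1}$ and the second by $\int_{B_R^c}|v|(f^M_\eps + f^K_\eps)$, itself exponentially small by the concentration step. The main obstacle is controlling $\|S_\eps\|_{L^1}$ in the presence of the $\eps^{-1}$ factor in $|\partial_v f^K_\eps|$; the resolution lies precisely in the uniform Gaussian upper bound of Lemma~\ref{lem:bound2phi}, which produces a factor $e^{-cM^2/\eps}$ that absorbs all such polynomial blowups as soon as $M$ is chosen sufficiently large.
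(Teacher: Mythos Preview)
Your overall strategy---$L^1$-contraction via Kato's inequality combined with the Gaussian tail bound of Lemma~\ref{lem:bound2phi}---is exactly the paper's, but your execution differs at one point and that point contains a gap.

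The paper does \emph{not} write $h_\eps=f^M_\eps-f^K_\eps$ as a solution of an equation with source $S_\eps=\partial_v\big[(N_M-N_K)f^K_\eps\big]$. Instead it tests $|h_\eps|$ against a fixed cutoff $\phi$ equal to $1$ on $B_{M-1}(0)$ and $0$ outside $B_M(0)$; since $N_M\equiv N_K$ on $\operatorname{supp}\phi$, $h_\eps$ solves the \emph{homogeneous} equation there, and the Kato computation produces only boundary terms carried by $\nabla\phi$. These involve $|h_\eps|$ on the annulus (not $|\partial_v f^K_\eps|$), and are bounded by $\eps^{-1}C\int_{\R^2\setminus B_R}|h_\eps|\le C e^{-\alpha R^2/\eps}$ via the tail bound, uniformly in $t\in[0,T]$.

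Your route requires $\|S_\eps(s)\|_{L^1}$, hence a pointwise control of $|\partial_v f^K_\eps|=\eps^{-1}|\partial_v\varphi^K_\eps|\,f^K_\eps$ on $\{|v|\ge M\}$. To invoke Proposition~\ref{prop:Repsphi} there you must first bound $w_\eps$ from above outside compacts, which (via the Lipschitz bound on $w_\eps$) gives $w_\eps(s,x,v)\lesssim (1+s^{-1/2})(1+|x|+|v|)$. Combined with $|\partial_v w_\eps|\le \sqrt{\eps/(2s)}+\theta(K)$ you obtain
\[
|\partial_v\varphi^K_\eps(s,x,v)|=2w_\eps|\partial_v w_\eps|\;\lesssim\;\Big(\tfrac{\sqrt\eps}{s}+\tfrac{1}{\sqrt s}+1\Big)(1+|x|+|v|),
\]
and the $\sqrt{\eps}/s$ contribution is \emph{not integrable} on $(0,t_0)$. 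Thus $\int_0^t\|S_\eps(s)\|_{L^1}\,ds$ as you estimate it diverges; your restriction ``for $t\ge t_0$'' leaves $\|h_\eps(t_0)\|_{L^1}$ uncontrolled, and continuity at $t=0$ is not uniform in $\eps$ because of the $\eps^{-1}$ drift. The paper's cutoff trick is precisely what sidesteps this: no derivative of $f^K_\eps$ ever appears, and the right-hand side of the Kato estimate is bounded uniformly in time.
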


\begin{proof}
From lemma~\ref{lem:bound2phi} we know that for any $M>0$ there exists some positive constant $C_M(T)$ such that
$$
\varphi^M_\eps(t,x,v)\leq -\frac A4\big(v^2+x^2\big)+C_M(T).
$$
For $M>M^*$ the constant $C_M(T)$ can be chosen as depending only on $M^*$ and not on $M$, thus we find that uniformly on $M>M^*$ previous inequality holds true. In detail, for any $K,M>M^*$, these remark allow us to write
$$
 \max\left\{\varphi^M_\eps(t,x,v),\varphi^K_\eps(t,x,v)\right\}\leq -\frac A4\big(v^2+x^2\big)+C_{M^*}(T),
$$
implying that there is some $R>M^*$ such that
\begin{equation}
\label{eq:cool}
 \max\left\{\varphi^M_\eps(t,x,v),\varphi^K_\eps(t,x,v)\right\}\leq -\alpha(x^2+v^2),\quad \forall\,(x,v)\in\R^2\setminus B_R(0),
\end{equation}
for some $\alpha$ small not depending on $\eps$. The rest of the proof is based on the fact that for any $K>M=R+1$, the equations solved by $f_\eps^M$ and $f_\eps^K$ are the same inside the ball $B_M(0)$. Indeed, for any $(x,v)\in B_M(0)$, we have that $h_\eps^{M,K}:=f_\eps^M-f_\eps^K$ solves
\begin{multline*}
 \partial_th_\eps^{M,K}=\big(a+N'(v)+\eps^{-1}\big)h_\eps^{M,K}+(ax-bv)\partial_xh_\eps^{M,K}
 \\
 +\big(N(v)+x+\eps^{-1}(v- \cris{\JJ[g_\eps]} (t))\big)\partial_vh_\eps^{M,K}
 +
  \eps\partial^2_{xx} h_\eps^{M,K}
 +\partial^2_{vv} h_\eps^{M,K},
\end{multline*}
then, thanks to the Kato inequality
\begin{multline*}
 \frac{d}{dt}\int_{\R^2} |h_\eps^{M,K}|\phi\leq\int_{\R^2}\big(a+N'(v)+\eps^{-1}\big)|h_\eps^{M,K}|\phi-\int_{\R^2}\partial_x\left((ax-bv)\phi\right)|h_\eps^{M,K}|
 \\
 -\int_{\R^2}\partial_v\left(\big(N(v)+x+\eps^{-1}(v- \cris{\JJ[g_\eps]} (t))\big)\phi\right)|h_\eps^{M,K}|
 \\
 +
  \eps\int_{\R^2}\phi\,\partial^2_{xx} |h_\eps^{M,K}| +\int_{\R^2}\phi\,\partial^2_{vv} |h_\eps^{M,K}|.
\end{multline*}
If we take a function $\phi$ equals to $1$ inside $B_{M-1}(0)$ and 0 outside $B_{M}(0)$ then we get
\begin{multline*}
 \frac{d}{dt}\int_{\R^2} |h_\eps^{M,K}|\phi\leq -\int_{\R^2}(ax-bv)\partial_x\phi|h_\eps^{M,K}|
 \\
 -\int_{\R^2}\big(N(v)+x+\eps^{-1}(v- \cris{\JJ[g_\eps]} (t))\big)\partial_v\phi |h_\eps^{M,K}|
 \\
 +
  \eps\int_{\R^2}\partial^2_{xx}\phi\, |h_\eps^{M,K}| +\int_{\R^2}\partial^2_{vv}\phi\, |h_\eps^{M,K}|
  \leq \eps^{-1}C\int_{\R^2\setminus B_{R}(0)} |h_\eps^{M,K}|,
\end{multline*}
for some constant $C=C(R,\phi)$. Finally, thanks to~\eqref{eq:cool} we have that
$$
 \int_{\R^2\setminus B_{R}(0)} f_\eps^M \leq \int_{\R^2\setminus B_{R}(0)}e^{-\alpha(x^2+v^2)/\eps} =2\pi \int_{R}^\infty e^{-\alpha\rho^2/\eps}\rho d\rho = \frac{\pi\eps}{\alpha}e^{-\alpha R^2/\eps},
$$ 
thus
$$
 \frac{d}{dt}\int_{\R^2} |h_\eps^{M,K}|\phi\leq \eps^{-1}C(R,\phi)\int_{\R^2\setminus B_{R}(0)} |h_\eps^{M,K}|\leq \frac{2\pi C}{\alpha}e^{-\alpha R^2/\eps},
$$
integrating and using that $h^{K,M}_\eps(0,x,v)\equiv0$ we conclude that $\int_{\R^2} |h_\eps^{M,K}|\phi\rightarrow0$ as $\eps\rightarrow0$. In particular, we get that
\begin{equation}
\label{eq:cool2}
 \int_{B_R(0)} |h_\eps^{M,K}|=\int_{B_R(0)} |f_\eps^{M}-f_\eps^{K}|\xrightarrow{\eps\rightarrow0}0.
 \end{equation}
To complete the proof, we consider an arbitrary test function $\hat\phi$, and notice that
$$
\Big|\int_{\R^2} f^{M}_\eps\hat\phi-\int_{\R^2} f^{K}_\eps\hat\phi \Big| \leq \|\hat\phi\|_\infty \int_{B_R(0)^c}(f^{M}_\eps+f^{K}_\eps) +\|\hat\phi\|_\infty \int_{B_R(0)}|h^{M,K}|,
$$
equipped with~\eqref{eq:cool} and~\eqref{eq:cool2}, the conclusion follows.
\john{Consider now $\phi=v$. The same inequality shows that:
$$
\Big|\int_{\R^2} v \, f^{M}_\eps-\int_{\R^2} v\,f^{K}_\eps \Big| \leq  \int_{B_R(0)^c} 2\,|v| \; e^{-\frac{\alpha}{\eps} (x^2+v^2)} +R \int_{B_R(0)}|h^{M,K}|,
$$
again converging to $0$ as $\eps\to 0.$
}

\end{proof}

By a very similar argument, we can now prove the following lemma: 
 \begin{prop}
 \label{prop:Reps}
There exists some $M$ large enough, such that for any regular test function $\phi\in C_c^\infty(\R^2)$ it holds that
 $$
 \Big|\int_{\R^2} f^{M}_\eps(t,x,v)\phi(x,v)-\int_{\R^2} g_\eps(t,x,v)\phi(x,v)\Big|\xrightarrow{\eps\rightarrow0}0.
  $$
  and 
  \[\Big| \JJ[f^M_\eps] - \JJ[g_\eps]\Big|\xrightarrow{\eps\rightarrow0}0.\]
\end{prop}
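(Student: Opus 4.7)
The plan is to mirror the argument of Lemma~\ref{lem:Reps}, exploiting the crucial observation that equation~\eqref{eq:truncated} satisfied by $f^M_\eps$ uses the \emph{same} coupling term $\JJ[g_\eps](t)$ as equation~\eqref{eq:evo1} satisfied by $g_\eps$ (the coupling is sourced by $g_\eps$ in both cases, rather than being self-consistent with respect to $f_\eps^M$). Consequently, on any ball $B_M(0)$ where $N_M\equiv N$, the difference $h_\eps:=g_\eps-f^M_\eps$ solves a linear parabolic equation identical in form to the one satisfied by $h_\eps^{M,K}$ in the proof of Lemma~\ref{lem:Reps}, with vanishing initial datum since $g_\eps^0=f_\eps^{M,0}$ by assumption.

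\textbf{Step 1: Uniform Gaussian tails.} Since $\psi_\eps=\eps\log g_\eps$, Lemma~\ref{lem:bound2psi} yields the pointwise bound
\[g_\eps(t,x,v)\leq \exp\Big(-\frac{A'}{2\eps}(v-\JJ[g_\eps](t))^2-\frac{A'x^2}{2\eps}+\frac{B'+Dt}{\eps}\Big).\]
Lemma~\ref{lem:bound2phi} gives the analogous bound for $f^M_\eps$ with the \emph{same} constants $A',B'$ and a common $D$, valid for every $M\geq M^*$. Together with the uniform bound $|\JJ[g_\eps](t)|\leq C_I$ from Lemma~\ref{lem:Iboundfull}, this allows me to fix $R>M^*$ and some $\alpha>0$, independent of $\eps<1$ and $M\geq M^*$, such that
\[\max\{g_\eps(t,x,v),f^M_\eps(t,x,v)\}\leq e^{-\alpha(x^2+v^2)/\eps},\qquad (x,v)\in\R^2\setminus B_R(0).\]
The truncation parameter is then chosen as $M=R+1$.

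\textbf{Step 2: Kato estimate and conclusion.} Applying Kato's inequality to $h_\eps$ and multiplying by a smooth cutoff $\chi\in C_c^\infty(\R^2)$ equal to $1$ on $B_{M-1}(0)=B_R(0)$ and supported in $B_M(0)$, the integration by parts produces only boundary-type contributions supported in the annulus $B_M(0)\setminus B_R(0)$ (where the tail bound is in force). As in the proof of Lemma~\ref{lem:Reps}, this yields
\[\frac{d}{dt}\int_{\R^2}|h_\eps|\chi\leq \frac{C}{\eps}\int_{\R^2\setminus B_R(0)}|h_\eps|\leq \frac{2\pi C}{\alpha}\,e^{-\alpha R^2/\eps},\]
for some $C=C(R,\chi)$. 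Since $h_\eps(0)\equiv 0$, time integration forces $\int_{B_R(0)}|h_\eps(t,\cdot,\cdot)|\to 0$ as $\eps\to 0$. For any $\hat\phi\in C_c^\infty(\R^2)$, the splitting
\[\Big|\int_{\R^2}(g_\eps-f^M_\eps)\hat\phi\Big|\leq \|\hat\phi\|_\infty\int_{B_R(0)^c}(g_\eps+f^M_\eps)+\|\hat\phi\|_\infty\int_{B_R(0)}|h_\eps|\]
together with Step~1 establish the first claim. The second claim on $\JJ$ follows from the same splitting with $\hat\phi(v)=v$, since $\int_{B_R(0)^c}|v|\,e^{-\alpha(x^2+v^2)/\eps}\,dxdv=O(\eps)$.

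\textbf{Anticipated difficulty.} The only delicate point is to ensure that $R$ can be chosen independently of $M$, so the prescription $M=R+1$ is not circular. This relies on the fact, embedded in Lemma~\ref{lem:bound2phi}, that the constants $A'$, $B'$ and $D$ in the quadratic super-solution can be taken independent of $M$ once $M$ exceeds a threshold $M^*$. All other ingredients—Kato's inequality, integration by parts against a cutoff, and Gaussian tail integration—are standard and follow almost verbatim the argument already laid out for Lemma~\ref{lem:Reps}.
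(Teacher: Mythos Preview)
Your proposal is correct and follows precisely the route the paper intends: the paper itself merely states ``By a very similar argument'' to Lemma~\ref{lem:Reps}, and your reconstruction---exploiting that $g_\eps$ and $f^M_\eps$ share the same external current $\JJ[g_\eps]$ so that $h_\eps=g_\eps-f^M_\eps$ solves the same linear equation on $B_M(0)$, then applying Kato's inequality against a cutoff and the common Gaussian tail bounds from Lemmas~\ref{lem:bound2psi} and~\ref{lem:bound2phi}---is exactly that argument. Your anticipated difficulty (non-circularity of the choice $M=R+1$) is also handled just as in the paper, via the $M$-independence of the constants in Lemma~\ref{lem:bound2phi} for $M>M^*$.
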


\johnNew{Lemma~\ref{lem:Reps} ensures that, for $M,K$ sufficiently large, the limits of $f^M$ and $f^K$ are identical. Therefore, in the weak (measure) sense, the sequence $g_\eps$ is converging towards $f^{M}$, therefore (by approximating $v$ with compactly supported functions) it holds that
$$
  \cris{\JJ[g_\eps]} (t)=\int_{\R^2} vg_\eps(t,x,v)\xrightarrow{\eps\rightarrow0} \int_{\R^2} vf^M(t,dx,dv),\quad \text{  a.e. in time,}
$$
moreover since $ \cris{\JJ[g_\eps]} $ is converging to $\alpha$ a continuous function, we have characterized the limit of $g$ in terms of the limit function $\varphi^M$.}

\section{Characterization of the limit, clamping and periodic solutions}\label{sec:indentification}
Altogether, the results of the previous sections show that the solutions to the mean-field FitzHugh-Nagumo system behave heuristically, in the large coupling limit, as $e^{-(v-\alpha(t))^{2}/2\eps}$ (in a logarithmic sense), where $\alpha$ is an adherent point of $\JJ[g_\eps]$, and therefore concentrate around singular Dirac measure on the voltage variable $v$, centered at a time-varying point $\alpha(t)=\int_{{\mathbb{R}^{2}}} v g(t,x,v)dxdv$. To complete the identification of the limit, we now characterize in this section the map $\alpha(t)$. We show in section~\ref{sec:derivation} that this map is the solution of a nonlinear ordinary differential equation in two dimensions, identical to the initial single-neuron model, and characterize the possible limiting behaviors.

We then turn in section~\ref{sec:numerics} to the numerical investigation of the system for large but finite $\eps$, illustrating the validity of the approach away from bifurcations, but also singular behaviors associated to the slow-fast nature of the perturbation in the vicinity of bifurcations. 

\subsection{Identification of a limit}\label{sec:derivation}
In this section, we propose one non-trivial limiting distribution of the process as $\eps\to 0$. We recall that the Cole-Hopf transform $\psi_{\eps}$ of the solution to the FitzHugh-Nagumo mean field equation $g_{\eps}$, given by $\psi_{\eps}=\eps\log(g_{\eps})$, satisfies the equation:
\begin{multline}\label{eq:PsiRecall}
\partial_t\psi_\eps
=
\left(\eps a+\eps N'(v)+1\right)
+
(ax-bv)\partial_x\psi_\eps
\\
+
\left( N(v)+x+\eps^{-1}(v- \cris{\JJ[g_\eps]} (t))\right)\partial_v\psi_\eps
\\
+
|\partial_x\psi_\eps|^2+\eps\partial^2_{xx}\psi_\eps
+
\eps^{-1}|\partial_v\psi_\eps|^2+\partial^2_{vv}\psi_\eps.
\end{multline}
This equation involves terms of distinct order as $\eps\to 0$: diverging terms, terms of order 1 and vanishing terms, imposing, using regular perturbations theory, the following conditions on non-vanishing terms:
\begin{equation}\label{eq:perturbations}
\begin{cases}
0=(v- \cris{\JJ[g_\eps]} (t))\partial_v\psi_\eps + |\partial_v\psi_\eps|^2\\
\partial_t\psi_\eps = 1 + (ax-bv)\partial_x\psi_\eps + ( N(v)+x)\partial_v\psi_\eps + |\partial_x\psi_\eps|^2+\partial^2_{vv}\psi_\eps. 
\end{cases}
\end{equation}
We study each equation separately, the first one characterizing the dependence of $\psi_{\eps}$ on the voltage variable and studied in section~\ref{sec:solV} and~\ref{sec:char}, the second one allowing, based on these results, to propose a full solution (characterizing the dependence in $x$ also) in the limit $\eps\to 0$ as we show in section~\ref{sec:solX}. 

\subsubsection{Dependence in the voltage variable}\label{sec:solV}
The first equality in equations \eqref{eq:perturbations} provides conditions on the dependence of $\psi_{\eps}$ in $v$ at leading order (terms of order $\mathcal{O}(\eps^{-1})$ in eq.~\eqref{eq:PsiRecall}), since it only depends on derivatives with respect to that variable. We shall denote $\psi_\eps^0$ the leading order terms of the system. Maps $\psi_{\eps}^0$ independent of $v$ are solutions of the problem, in particular $\psi_{\eps}=c\leq 0$ satisfy the equation and the negativity constraint. 
Non-constant solutions for which the differential does not vanish (except at isolated points) satisfy the equation:
\[\partial_{v}\psi_{\eps}^0=-(v-\john{\JJ[g_\eps](t)}),\]
imposing the fact that:
\begin{equation}\label{eq:psi_global}
\psi_{\eps}^0(t,x,v)=-\frac{(v-\john{\JJ[g_\eps](t))}^{2}}{2} + \phi_{\eps}(x,t)
\end{equation}
for some function $\phi_{\eps}$ to be determined. 

We now discuss the set of possible solutions of the equation. First, initial conditions satisfying the assumptions of lemma~\ref{lem:bound2psi}, we necessarily have a quadratic upperbound on $\psi_{\eps}$, a condition that cannot be satisfied for all $v$ by constant solutions. Therefore, possible solutions of the equation may either have a quadratic dependence in $v$ given by equation~\eqref{eq:psi_global}, or solutions of the type:
\[\bar{\psi}_{\eps,\gamma,\delta}^0(t,x,v)=\begin{cases}
-\frac{(v-\john{\JJ[g_\eps](t)})^{2}}{2} + \frac{(\delta-\john{\JJ[g_\eps](t)})^{2}}{2}+ \bar{\phi}_{\eps,\delta,1}(x,t) & v<\delta\\
0 & v\in [\delta,\gamma] \\
-\frac{(v-\john{\JJ[g_\eps](t)})^{2}}{2} + \frac{(\gamma-\john{\JJ[g_\eps](t)})^{2}}{2} + \bar{\phi}_{\eps,\gamma,2}(x,t) & v<\gamma.\\
\end{cases}\]

The fact that $\psi_{\eps}\leq 0$ imposes also a negativity constraint on $\phi_{\eps}$. Indeed, evaluating the negativity condition along the parametric curve $v=\john{\alpha}(t)$ for all $t$, we find that necessarily,
\[\phi_{\eps}(x,t)\leq 0.\]
For the quadratic solution~\eqref{eq:psi_global}, the only zeros of that map occur when $v=\john{\alpha}(t)$, and the solutions concentrate, in the voltage variable, as a Dirac mass at $\john{\alpha}(t)$, which we will characterize in section~\ref{sec:char}.

\subsubsection{Concentration on the deterministic FitzHugh-Nagumo equation}\label{sec:char}

The above derivations show that in the limit $\eps\to 0$, the distribution of the solutions to the mean-field FitzHugh-Nagumo equations have a voltage that may concentrate around a Dirac measure at a time-varying position $\john{\alpha}(t)$. We now derive the equation on $\john{\alpha}(t)$. Actually, we show that, given an initial condition $f(0,x,v)$ with sufficient integrability property to the mean-field FitzHugh-Nagumo equation, the map $t\mapsto \john{\alpha}(t)$ is given \cris{by} the unique solution to the ordinary (single-neuron) FitzHugh-Nagumo equation:
\begin{equation}\label{eq:LimitFhN}
\begin{cases}
\frac{d\john{\alpha}}{dt} & = -N(\john{\alpha}) -\john{\beta}\\
\frac{d\john{\beta}}{dt} & = -a \john{\beta} + b \john{\alpha}\\
\end{cases}
\end{equation}
with initial conditions $[\alpha(0),\beta(0)]=\int_{{\mathbb{R}^{2}}} [v,x] f(0,x,v)dxdv$.

To show this property, we write down the equation satisfied by $\alpha(t)$ in the limit $\eps\to 0$, starting from its original definition
\[\alpha(t) = \lim_{\eps\to 0} \int_{{\mathbb{R}^{2}}} v g_{\eps}(t,x,v)dxdv,\] 
and using the evolution equation~\eqref{eq:evo1}. Formally using the concentration of the solutions around the Dirac mass at $\alpha(t)$ in $v$, we obtain
\begin{align*}
\frac{d\alpha}{dt} & = \lim_{\eps\to 0}\Bigg[\int_{\mathbb{R}^{2}} v\partial_{v}\Big[(N(v)+x+\varepsilon^{-1}(v-\alpha))g_{\eps}(t,x,v)-\partial_{v}g_{\eps}(t,x,v)\Big] \\
& \qquad - \int_{\mathbb{R}^{2}} v\partial_{x}\Big[(ax-bv)g_{\eps}(t,x,v)+\eps \partial_{x}g_{\eps}(t,x,v)\Big]\Bigg]\\
&=\lim_{\eps\to 0}\Big[\int_{\mathbb{R}^{2}} (-N(v)-x+\varepsilon^{-1}(\alpha-v))g_{\eps}(t,x,v)-\partial_{v}g_{\eps}(t,x,v)\Big] \\
&=-N(\alpha) -\langle x\rangle
\end{align*}
using integration by parts, sufficient decay of $g_{\eps}(t,x,v)$ and the shrinkage of the support of $g_{\eps}(t,x,v)$ at $\alpha(t)$, and denoting $\langle x\rangle$ the average value of $x$ at time $t$, 
\[\langle x\rangle(t)=\int_{\R^{2}}x \;g_{\eps}(t,x,v).\] 
This quantity satisfies the equations:
\begin{align*}
\frac{d\langle x\rangle}{dt} & = \lim_{\eps\to 0}\Bigg[\int_{\mathbb{R}^{2}} x\partial_{v}\Big[(N(v)+x+\varepsilon^{-1}(v-\alpha))g_{\eps}(t,x,v)
\\
&\qquad-\partial_{v}g_{\eps}(t,x,v)\Big] - \int_{\mathbb{R}^{2}} x\partial_{x}\Big[(ax-bv)g_{\eps}(t,x,v)\Big]\Bigg]\\
&=\lim_{\eps\to 0}\int_{\mathbb{R}^{2}} (ax-bv)g_{\eps}(t,x,v) \\
&=a \langle x\rangle -b \alpha,
\end{align*}
concluding formally the argument that $\alpha(t)$ satisfies the FitzHugh-Nagumo equation, coupled to the average value of $x$:
\[
\begin{cases}
\frac{d\alpha}{dt} & = -N(\alpha) -\beta\\
\frac{d\beta}{dt} & = -a \beta  +b \alpha.\\
\end{cases}
\]

In these formal derivations, we have assumed that for regular polynomial maps $F$, the quantity $\int F(v)g_{\eps}(t,x,v)$ converges towards $F(\alpha(t))$ when $\eps\to 0$, and, more boldly, that the interaction term (with a coefficient $\eps^{-1}$) converged towards $0$. 

We provide here a non-rigorous justification of these limits, which relies on assumptions we highlight below. If the solution concentrate as predicted by the quadratic solution derived at leading order, then we have \[g_{\eps}(t,x,v)=\exp(\john{-(v-\JJ[g_\eps](t))^2/\eps} + \phi_{\eps}(t,x)/\eps + \xi_\eps(t,x,v) )\] 
where the first two terms characterize the leading order behavior of the system as $\eps \to 0$ introduced in the previous section, and $\xi_\eps$ the rest:
\[\xi_{\eps}(t,x,v)=\psi_\eps(t,x,v)+\frac{(v-\JJ[g_\eps](t))^2}{2} - \phi_{\eps}(t,x),\]
which we expect to be negligible compared to the first order terms as $\eps\to 0$, and assume that these are of order $\eps$ based on the perturbation equations derived above. We denote $\xi_\eps = \eps \zeta_\eps$, with $\zeta_\eps$ of order $1$, and drop the index $\eps$ for simplicity in the ansatz below. 
Because of the regularity of $g_\eps$, we assume in the sequel that $\zeta$ is continuously differentiable in $v$. Based on these assumptions, we obtain, because of the normalization property, and using $g_\eps=e^{\psi_\eps/\eps}$ and the change of variables $u=(v-\alpha)/\sqrt{\eps}$,
\begin{align*}
1=\int_{\R^{2}} g_{\eps}(t,x,v)\,dxdv = \sqrt{\eps} \int_{\R^{2}} e^{\zeta(t,x,\alpha(t)+\sqrt{\eps}u)}e^{-u^{2}/2 + \phi_{\eps}(t,x)/\eps} \,dxdv \end{align*}
so that for all $t\geq 0$, 
\[\lim_{\eps \to 0} \sqrt{\eps} \int_{\R^{2}} e^{ \zeta(t,x,\alpha(t))+ \frac 1 \eps \phi_{\eps}(t,x)} dx=1 \]

Let now $F$ be a polynomial of order $k$ less than $3$ (in the above derivations, only the cubic map $N$ and linear functions were involved, yet the properties below are valid for a larger class of functions). We have:
\begin{multline*}
\int_{\R^{2}} F(v) g_\eps(t,x,v)dxdv 
\\
=\sqrt{\eps} \int_{\R^{2}} F(\alpha(t)+\sqrt{\eps}u) e^{ \zeta(t,x,\alpha(t)+\sqrt{\eps}u)}e^{-u^{2}/2 + \phi_{\eps}(t,x)/\eps} \,dxdv
\\=F(\alpha(t))\sqrt{\eps} \int_{\R^{2}} e^{ \zeta(t,x,\alpha(t))+\frac 1 \eps \phi_{\eps}(t,x)} dx + O(\sqrt{\eps})\to F(\alpha(t)).
\end{multline*}
This remark justifies the convergence of $\int_{\R^{2}} [N(v),v] g_{\eps}(t,x,v)$ towards $[N(\alpha),\alpha]$. The convergence of $\eps^{-1}\int (v-\alpha) f$ towards 0 remains to be justified, and has to be handled with care because of the presence of the factor $\eps^{-1}$. Actually, the convergence to $0$ of the interaction term stems not from the fact that $f$ converges towards a Dirac measure at $\alpha(t)$, but rather because the symmetry of $\psi$ around $\alpha(t)$. Indeed, we have:
\[\frac 1 \eps \int_{\R^{2}} (v-\alpha) e^{\frac{-(v-\alpha)^{2}}{\eps}}e^{\frac 1 \eps \phi_{\eps}(t,x) + \zeta(t,x,v)} dxdv = \int_{\R^{2}} u e^{-u^{2}}e^{\frac 1 \eps \phi_{\eps}(t,x) + \zeta(t,x,\alpha(t)+\sqrt{\eps}u)} dxdu,\]
and we thus conclude that, as $\eps\to 0$:
\begin{multline*}
\frac 1 \eps \int_{\R^{2}} (v-\alpha) e^{\frac{-(v-\alpha)^{2}}{\eps}}e^{\phi_{\eps}(t,x) +\zeta(t,x,v)} dxdv 
\\
\simeq \int_\R e^{\frac 1 \eps \phi_\eps(t,x)+\zeta(t,x,\alpha(t)} \int_{\R} u e^{-u^{2} + \sqrt{\eps} \partial_v \zeta(t,x,\alpha(t))}\,du\,dx.
\end{multline*}
While the first term is of order $\eps^{-1/2}$, the second term is always equal to $0$ because $\psi_\eps(t,v)$ is a symmetric distribution around $\alpha(t)$, and thus the above integral should be indeed vanishing.

\subsubsection{Dependence in the adaptation variable}\label{sec:solX}
To characterize $\phi_{\eps}$, it is natural to inject the expression \eqref{eq:psi_global} into the characteristic equation of $\psi_{\eps}$ or simply in the second equality of equation~\eqref{eq:perturbations}. However, this leads to the equation:
\[-(v-\john{\JJ[g_\eps]}(t))\frac{d\john{\JJ[g_\eps]}(t)}{dt} + \partial_{t}\phi_{\eps} = -(N(v)+x)(v-\john{\JJ[g_\eps]}(t))+ (ax-bv)\partial_{x}\phi_{\eps} + \vert \partial_{x}\phi_{\eps}\vert^{2},\]
which cannot be solved for a $\phi_{\eps}$ independent of $v$ (a particular difficulty arises with the term depending on the product of $x$ and $v$). However, since we are considering solutions that are probability distributions, we only consider maps $\phi_{\eps}$ for which the equation is valid on the support of $g_{\eps}$, which, as we indicated above, is restricted in the limit $\eps\to 0$ to a single, time-varying point $v=\alpha(t)$. In the limit $\eps\to 0$, we can thus consider solutions with this value of $v$ to determine $\phi_{0}$ the limit of $\phi_{\eps}$ at $\eps=0$, which significantly reduces the equation to:
\[\partial_{t}\phi_{0} = (ax-b\alpha)\partial_{x}\phi_{0} + \vert \partial_{x}\phi_{0}\vert^{2}.\]
To find a particular solution to this equation consistent with our quadratic upper-bound derived in lemma~\ref{lem:bound2phi}, we start looking for possible solution of the form 
\[\phi_{0}(x,t)=\frac A 2 \; (x-\beta(t))^{2}+B\] 
for some constants $(A,B)\in \R^{2}$ and $\beta:\R^{+}\mapsto \R$ a differentiable function. A necessary condition for that function to be a solution of this equation is thus:
\[-A(x-\beta(t))\frac{d\beta}{dt} = A(ax-b\alpha)(x-\beta(t)) + A^{2}(x-\beta(t))^{2}\]
and therefore:
\[b\alpha + A\beta(t) -\frac{d\beta}{dt} = (a+A)x.\]
This equation completely characterizes the parameters of the quadratic ansatz. Indeed, because the lefthand side is independent of $x$, this requires to cancel the dependence in that variable in the lefthand side of the equation, thus implying $A=-a$. Moreover, this further implies that the map $\alpha(t)$ satisfies the differential equation depending upon $I(t)$:
\[ \frac{d\beta}{dt} = b\alpha(t) -a \beta(t),\]
providing an expression of $\beta$ only depending on $\alpha$ and the parameters of the system. 

\bigskip

We thus conclude that a solution to the equation is given by the formula:
\[\psi=-\frac 1 2 (v-\alpha(t))^{2} -\frac a 2 (x-\beta(t))^{2} + B.\]
The constraint of non-positivity of $\psi$ and the fact that it reaches $0$ imply that necessarily $B=0$, thus yielding the following form for $\psi_{\eps}$ in the limit $\eps\to 0$:
\begin{equation}\label{eq:psifinal}
\cris{\psi(t,x,v)}=-\frac 1 2 (v-\alpha(t))^{2} -\frac a 2 (x-\beta(t))^{2}.
\end{equation}

\begin{rem}
We note that the upper-bound derived in lemma~\ref{lem:bound2phi} is valid here, and provides a justification for the assumption that the constant $A$ in that lemma is such that $A\leq \min\{1,a\}$. 
\end{rem}

 By showing that the voltage variable concentrates at all times towards the a Dirac measure at a time-varying point $\alpha(t)$ solution of the FitzHugh-Nagumo equation with initial conditions equal to the average initial voltage and recovery variable, we conclude that:
\begin{enumerate}
\item The limiting behavior of the system only depends on the first moment of the initial condition, i.e. the dynamics of system~\eqref{eq:evo1} with initial conditions having the same two first moments collapse on the same solution when $\eps\to 0$;
\item Despite the presence of noise (with a fixed, non-vanishing diffusion coefficient), the limiting voltage converges towards a deterministic function of time: strong connectivity completely cancels the effects of noise.
\item The mean-field equation may have multiple stationary solutions as well as periodic solutions. 
\end{enumerate}

\subsection{Numerical simulation of the convergence}
We confirm here numerically the convergence of the distribution of the solution of the network equation towards the predicted solution. \john{To this purpose and because of the simple and explicit form of the limit obtained, we do not need to use sophisticated methods for integrating McKean-Vlasov equations, as those developed in the literature of numerical kinetic theory (see e.g.~\cite{filbet} and references therein).}   To this end, we simulated extensively the network associated with the PDE analyzed in the paper:
\begin{equation}\label{eq:networksims}
\begin{cases}
\displaystyle{dv^{i}_{t}=\left(-N(v^{i}_{t})-x^{i}_{t}+\frac 1 {\eps\,n}\sum_{j=1}^{n} (v^{j}_{t}-v^{i}_{t})\right)\,dt + \sqrt{2}\;dW^{i}_{t}}\\
dx^{i}_{t}=\left(-a\,x^{i}_{t}+b\,v^{i}_{t}\right)\,dt+ \sqrt{2\eps}\;dB^{i}_{t}
\end{cases}
\end{equation}
where $(W^{i}_{t},B^{i}_{t},i=1\cdots n)$ are independent standard Brownian motions. The theory predicts that we have propagation of chaos (see~\cite{mischler2016kinetic}) and that the distribution of the voltage and adaptation variables $(v^{i}_{t},x^{i}_{t})$ of a given neuron $i$ behaves, at leading exponential order, as independent Gaussian variables with density (making explicit the leading-order normalization term):
\[\frac{1}{\sqrt{2\pi \eps}} \exp\left(-\frac{(v-\alpha(t))^{2}}{2\eps}\right) \sqrt{\frac{a}{2\pi \eps}}\exp\left(- \frac{a\,(x-\beta(t))^{2}}{2\eps}\right).\] 
\begin{figure}[h]
\centerline{\includegraphics[width=.95\textwidth]{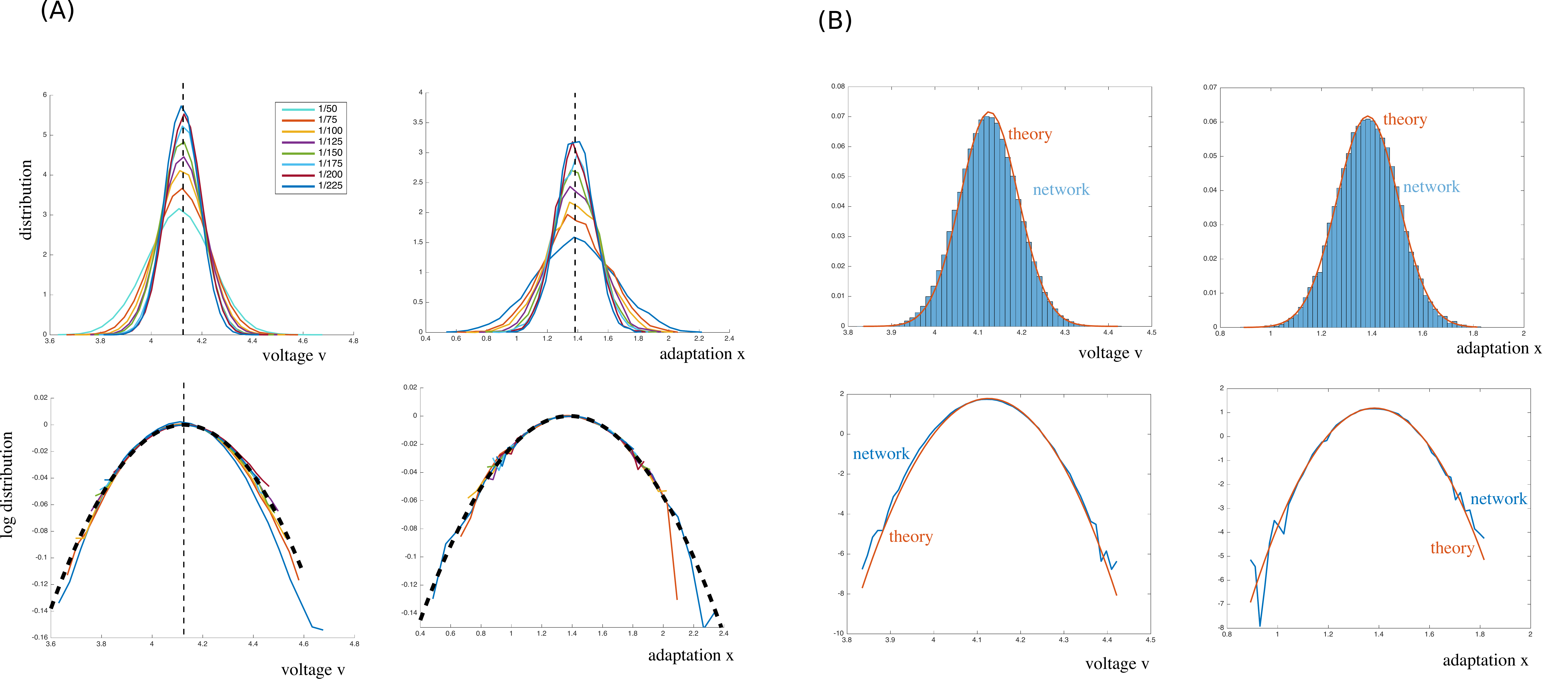}}
\caption{Concentration of measure: Simulations of the network equations with $n=5\,000$ neurons,  $N(v)=v(v-\lambda)(v-1)$ with $\lambda=4$, $a=0.3$ and $b=0.1$. (A, top) as $\eps$ decreases, the distribution of voltage (left) and adaptation variable (right) converge towards the solution of the deterministic FhN equation (dashed line). (A, bottom) An excellent match with the predicted Gaussian profile (black dashed line) is observed. Solid lines depict $\eps\log(\hat{\mu}_{v}) +\frac 1 2 \log(2\pi\eps)$ (left) or $\eps\log(\hat{\mu}_{x}) +\frac 1 2 \log(2\pi\eps/a)$ for $\hat{\mu}_{v}$ (resp., $\hat{\mu}_{x}$) the empirical distribution of $(v^{i},i\in \{1,\cdots,n\})$ (resp., $(x^{i},i\in \{1,\cdots,n\})$). (B) highlights the empirical distribution of voltage (left) and adaptation (right) for $\eps^{-1}=225$ and its good match with the predicted leading order behavior (red), in linear (top) or logarithmic (bottom) scale. }
\label{fig:Distributions}
\end{figure}

To test the accuracy of this result for finite networks and non-zero $\eps$, we computed numerically using the Euler-Maruyama scheme the solution of equation~\eqref{eq:networksims} with $n=5\,000$ neurons and for various values of $\eps$ (very similar results are already valid at smaller values of $n$). Figure~\ref{fig:Distributions} shows, for a given parameter set, the concentration of the distribution around $\alpha(t)$ (dashed line) as $\eps$ decreases (panels (A)), both for the voltage and for the adaptation variable. Moreover, a clear Gaussian profile emerges, with the parameters of the theoretical distribution, as visible in the bottom row of panel (A), where the logarithm of the distribution in $v$ or $x$ are plotted for various values of $\eps^{-1}$ shows a clear collapse on the predicted profiles $-(v-\alpha(t))^{2}/2\eps$ and $-a(x-\beta(t))^{2}/2\eps$ (dashed black curves). Panels (B) also highlight the particularly good match of the network simulation with the solution for a fixed value of $\eps^{-1}=225$.

\subsection{Some properties of the solution}
We now recall some well-know results on the solutions to the equation that govern the time-dependent points at which the solutions concentrate. The classical FitzHugh-Nagumo equation corresponds to the case where $N(x)=x(x-1)(x-\lambda)+I_{0}$ where $I_{0}$ is an input current. In that case, we have the following well-known characterization, summarized in the following:
\begin{lemma}\label{thm:FhN}
The limit equation displays either:
\begin{itemize}
\item Two stable and one unstable stationary distributions when $\Delta>0$
\item A single stationary distribution centered, in the voltage variable, at a value $v^{*}$ when $\Delta \leq 0$, which is:
\begin{itemize}
\item stable when $T<0$ and globally attractive,
\item unstable when $T>0$, in which case a globally attractive periodic solution exists;
\end{itemize}
\end{itemize}
where
\[\Delta = -27 I_{0}^{2} + 18 (1+\lambda)(\lambda+\frac {b}{a}) I_{0} - 4 (\lambda+\frac {b}{a})^{3}-4( 1+\lambda)^{3}I_{0}+( 1+
\lambda)^{2} (\lambda+\frac b a)^{2}
\]
and
\[T=-3 (v^{*})^{2}+2(1+\lambda)v^{*}-\lambda+a.\]
A saddle-node bifurcation arises at $\Delta=0$, and a Hopf bifurcation occurs at $T=0$. 
\end{lemma}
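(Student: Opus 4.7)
The plan is to follow the standard phase-plane analysis for a planar ODE with cubic nonlinearity, treating in turn (i) enumeration of equilibria, (ii) linear stability, (iii) identification of the bifurcations, and (iv) global behavior of trajectories.

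First I would locate the equilibria of~\eqref{eq:LimitFhN}. Setting $\dot\alpha=\dot\beta=0$ imposes $\beta=-N(\alpha)$ and $\beta=\frac{b}{a}\alpha$, so $\alpha$ must be a root of the cubic
\[
P(\alpha):=\alpha^3-(1+\lambda)\alpha^2+\Bigl(\lambda+\frac{b}{a}\Bigr)\alpha+I_0.
\]
The quantity $\Delta$ in the statement is (up to sign convention) the discriminant of $P$; hence $\Delta>0$ yields three distinct real roots, $\Delta=0$ a double root (corresponding to a saddle-node), and $\Delta<0$ a unique real root. For each equilibrium $v^\ast$ one computes the Jacobian
\[
J(v^\ast)=\begin{pmatrix} -N'(v^\ast) & -1 \\ b & -a \end{pmatrix},\qquad
\operatorname{tr} J=-N'(v^\ast)-a,\qquad \det J=aN'(v^\ast)+b,
\]
so that stability amounts to the two conditions $N'(v^\ast)>-a$ and $N'(v^\ast)>-b/a$, equivalent (since $a,b>0$) to $\operatorname{tr} J<0$ and $\det J>0$. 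Using $N'(v)=3v^2-2(1+\lambda)v+\lambda$ rewrites $\operatorname{tr} J$ precisely as the quantity $T$ in the statement.

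Next I would handle the three-equilibria case ($\Delta>0$). A direct sign inspection of $P'$ and $P$ at the three roots shows that the middle root $v_m^\ast$ satisfies $N'(v_m^\ast)<-b/a$ (so $\det J<0$, i.e.\ a saddle), whereas the two outer roots satisfy $N'>0$, forcing both $\operatorname{tr} J<0$ and $\det J>0$, hence both stable. When $\Delta\downarrow 0$ the middle saddle collides with one of the nodes through a standard saddle-node bifurcation (the usual transversality check on $P$ is straightforward). In the single-equilibrium case $\Delta<0$, the sign of $T$ governs stability: $T<0$ gives an attracting focus/node, and the change of sign $T=0$ is a Hopf bifurcation, for which one verifies $\det J>0$ at $T=0$ (positive eigenvalues purely imaginary) and computes the first Lyapunov coefficient for non-degeneracy — this is a routine normal-form computation for the cubic FhN and yields a supercritical Hopf.

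The main analytic work — and really the only non-routine step — is the global statement: (a) when the unique fixed point is stable ($\Delta\le 0$, $T<0$), it is globally attractive; (b) when it is unstable ($T>0$), a globally attractive periodic orbit exists. Both rely on a dissipativity estimate, which I would obtain by considering the Lyapunov candidate
\[
V(\alpha,\beta)=\tfrac{1}{2}\alpha^2+\tfrac{1}{2b}\beta^2
\]
(or any quadratic form adapted to the linear part), and checking that $\dot V\le -cV+C$ outside a sufficiently large ball, which uses the cubic decay of $-\alpha N(\alpha)$ at infinity to dominate the cross terms. This yields a compact positively invariant region $\Omega$ containing every equilibrium. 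Within $\Omega$, the Poincaré--Bendixson theorem applied to this planar system concludes: in case (a), no periodic orbit can enclose only the stable fixed point (by index theory combined with the divergence argument via Bendixson--Dulac using the weighting $e^{at}$ or equivalently showing $\nabla\cdot(\dot\alpha,\dot\beta)=-N'(\alpha)-a$ has constant sign outside a small set; a slightly more careful Dulac function may be needed to exclude cycles entirely), so every trajectory converges to it; in case (b), every $\omega$-limit set in $\Omega$ avoids the unstable equilibrium and is thus a periodic orbit, and uniqueness/global attractivity follows from the standard monotone-annulus argument for planar oscillators with a single unstable focus. The only subtle point is the Bendixson--Dulac exclusion of limit cycles in case (a), which may require restricting the parameter regime slightly or invoking the fact that all equilibria besides $v^\ast$ are absent; the proof would make explicit which argument (Dulac function vs.\ energy decay) is used depending on $(a,b,\lambda,I_0)$.
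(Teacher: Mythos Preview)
The paper does not actually prove this lemma; immediately after stating it, the authors write that ``the above result stems from the classical analysis of the FitzHugh-Nagumo system'' and move on. Your sketch is essentially the textbook route one would take to justify such a statement, and in that sense is more than the paper itself provides.

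That said, there is one concrete gap in your argument for the three-equilibrium case. From the fact that the outer roots of $P$ are simple with $P'(v^\ast)>0$ you only obtain $N'(v^\ast)>-b/a$, i.e.\ $\det J>0$; your claim that ``the two outer roots satisfy $N'>0$'' does not follow, and hence the trace condition $\operatorname{tr}J=-N'(v^\ast)-a<0$ is not automatic. In particular, nothing in the hypothesis $\Delta>0$ alone rules out a Hopf instability at one of the outer equilibria (this can indeed occur for suitable $(a,b,\lambda,I_0)$). The lemma as stated in the paper is therefore somewhat informal on this point, and your proof inherits that imprecision rather than repairing it; a clean statement would need a side condition such as $b\le a^2$, or an explicit restriction to parameter regimes where $N'(v^\ast)>-a$ at the outer roots.

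Your treatment of the global claims (absence of cycles when $T<0$, existence and uniqueness of a globally attracting cycle when $T>0$) is honest about the difficulties. These are genuinely the non-routine parts, and the Dulac/Bendixson and monotone-annulus arguments you outline are the standard tools, though making them fully rigorous for all admissible parameters is delicate --- again, more than the paper attempts.
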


The above result stems from the classical analysis of the FitzHugh-Nagumo system. The originality of this result is that despite the fact that the PDE system has a diffusive term, the mean-field equation may display multiple stationary solutions or even no stable stationary solution and a periodic solution. This surprising phenomenon is essentially due to the nonlinear nature of the McKean-Vlasov system, and joins other similar observations in related systems~\cite{scheutzow1985some,scheutzow1988periods,scheutzow1993stabilization,scheutzow1988stationary,scheutzow1985noise,scheutzow1986periodic,tugaut2013self,tugaut2014self,herrmann2012self,tugaut2013convergence}. 

The above result is only asymptotic, and it remains an open problem to show that similar multi-stable or periodic solutions exist when $\eps$ is finite. In the case where there exists multiple solutions, the fact that these are well separated ensures that, for large but finite $\eps$, multiple stationary solutions may exist. Proving these results may require the use of spectral theory (multi-stability) or the existence of invariant hyperbolic manifolds in the flavor of~\cite{giacomin2012transitions}. 

\section{Numerical simulations}
\label{sec:numerics}

The results proved in this paper are valid for the mean field equation (limit $n\to \infty$) in the strong coupling limit ($\eps \to 0$). They may thus be of little relevance to describe the dynamics of the finite neural network~\eqref{eq:network} with finite coupling. Moreover, we argued that the solutions of equation~\eqref{eq:network} shall remain close to those with a small diffusion on the adaptation variable. In this section, we display numerical simulations of the network equation~\eqref{eq:network} and investigate whether the limit equation faithfully represents the dynamics of finite networks with noise, both in multi-stable and oscillatory regimes, and explore irregular dynamics occurring at the transition between those regimes. 

\begin{figure}[!h]
\centerline{\includegraphics[width=.95\textwidth]{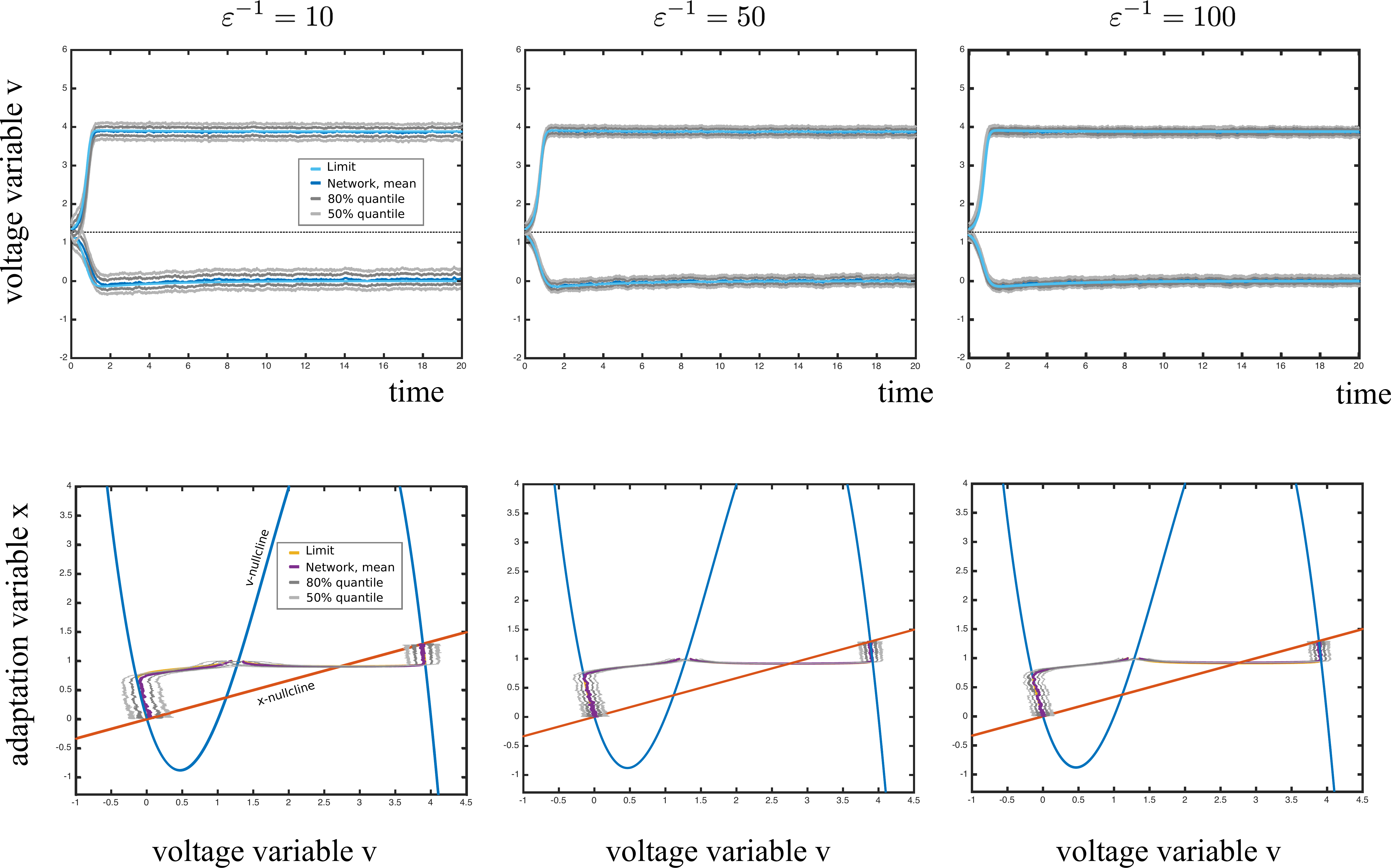}}
\caption{Bistable network: numerical simulations of the network~\eqref{eq:network} with $n=500$ neurons, corresponding to a bistable system (nullclines depicted in phase diagrams, bottom row). Networks parameters: $\sigma=1$ and $\eps^{-1}: 10,\; 50$ and $100$. (top): voltage variable as a function to time, bottom: trajectories in the phase diagram. The two sets of trajectories in each diagram correspond to random initial conditions centered in the attraction bassin of  the two stable fixed points. Curves: theoretical solution, average network variables and 10, 25, 75 and 90\% quantiles (see colorcode in the figure).}
\label{fig:FixedPoints}
\end{figure}

{\bf Multistability } In Fig.~\ref{fig:FixedPoints}, we consider a parameter set for which the limit equation displays two stable solutions (see parameters in the Figure caption). For these parameters, the phase diagram is split into two regions corresponding to the attraction bassins of the two fixed points. We compare the dynamics of the theoretical solution and simulations of a relatively small network $n=500$ with noise of standard deviation $\sigma=1$, and particularly consider the impact of the level of coupling (value of $\eps$) and the centering of the initial condition.

We observe that the network dynamics is closely centered around the theoretical solution; this is visible in the excellent agreement of the theoretical solution and the average voltage and adaption value (averaged over the $n$ neurons) for all three values of $\eps$ tested. This shows that the network also features a clear apparent bistability, as the network stabilizes around the theoretically predicted solutions (because of the finite network size and non-infinite coupling, the network shall randomly switch between the two attractors). Moreover, the bassins of attractions of the network appear to match with the theoretical solution. In particular, in Fig.~\ref{fig:FixedPoints}, we considered initial conditions close from the separatrix ($x(0)=1$, $v(0)=1.2$ or $v(0)=1.35$, on both sides of the value of the voltage of the separatrix at $x(0)=1$, depicted with a dashed line in the top row of Fig.~\ref{fig:FixedPoints}). Moreover, the shrinkage of the distribution around the theoretical solution is illustrated by plotting the trajectories of the 10, 25, 75 and 90 \% quantiles, outlining the voltage and adaptation values associated containing 80\% (between quantiles 10 and 90\%) or 50\% (between quantiles 25\% - 75\%) of the distribution.

{\bf Periodic solutions} In Fig.~\ref{fig:particles2}, we simulate the theoretical limit and the network equation for parameters associated with periodic solutions of the limit. The parameters are identical to those of Fig.~\ref{fig:FixedPoints}, with a decreased the adaptation parameter $a$ and increased external input $I_{ext}$. As in Fig.~\ref{fig:FixedPoints}, we depict the voltage trajectories as a function of time (top row) and the trajectories in the phase plane for the theoretical solution, the mean of the network variables as well as the 10\%, 25\%, 75\% and 90\% quantiles. 
\begin{figure}[h]
\centering
\centerline{\includegraphics[width=.9\textwidth]{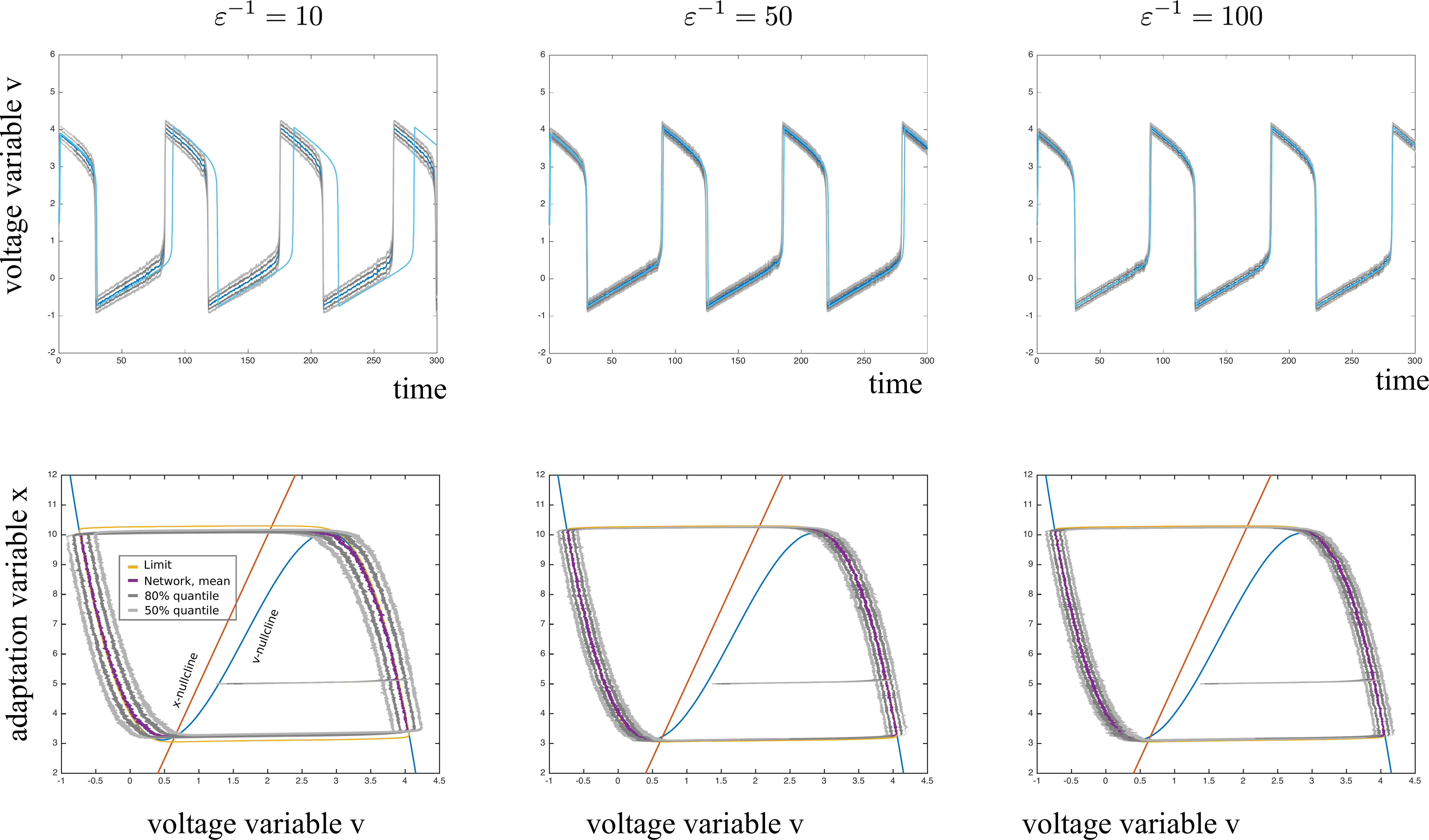}}
\caption{Simulation of the network equation~\eqref{eq:network} in the oscillatory regime. Same color code and parameters as used for Fig.~\ref{fig:FixedPoints}, except for $a=0.03$ and $I_{ext}=4$. }
\label{fig:particles2}
\end{figure}

We observe that, despite the relatively small size of the network, the trajectories are accurately predicted by the theoretical solution, and neurons oscillate periodically and in phase. For $\eps=0.1$, apparently periodic solutions emerge (although, because of the finite network size, these solutions will not be rigorously periodic). Compared to the theoretical solution, \cris{\sout{the }}we notice that network oscillations are slightly faster, and, in the phase space, do not exactly match the theoretical limit near the folds. Indeed, for not small enough $\eps$, the concentration of trajectories is not sufficient to follow the relaxation cycle up to the folds of the voltage nullcline, leading to a faster relaxation cycle and accounting for the distinction between network and theoretical cycles. For $\eps=0.02$ or $0.01$, these differences progressively vanish, and, for $\eps=0.01$, the simulation of the network shows a very good match with the theoretical solution, both in time and in the phase space. 

{\bf Perturbations near the transitions}
The above simulations show that for parameters away from the bifurcations of the deterministic system~\eqref{eq:LimitFhN}, the network equation, even with a relatively small number of neurons, closely follows the dynamics predicted theoretically by their limit $n\to \infty$ and $\eps\to 0$. However, near bifurcations, the finiteness of the number of neurons and the fact that $\eps>0$ can have dramatic effects on the solutions, that are hard to predict. Indeed, because of the interaction term in $\eps^{-1}$, the dynamics can be seen as a slow-fast system, showing a rapid concentration around a Dirac solution at the average value of the system, followed by a slow evolution guided by the flow of~\eqref{eq:LimitFhN}. When this flow brings the system towards a weakly stable regime (for parameters close from a bifurcation point), finite-size effects and fluctuations due to the presence of noise (that are not completely cancelled by an infinitely strong coupling) may lead to deviations from the theoretical limiting behavior, and produce new unpredicted dynamics. Here, we explored numerically these effects by considering parameters in the vicinity of the Hopf instability. 

\cris{Our numerical simulations tend} to show that the behavior of the network deviates from the dynamics predicted by the limiting ordinary differential equation. In Fig.~\ref{fig:transition}, we consider the role of the input current $I_{ext}$. We show in particular that large relaxation oscillations (interpreted as action potentials in neuroscience) appear and stabilize in regimes where the deterministic system describing the limit only shows a stable fixed point. The frequency of the oscillations appears relatively regular, and increases progressively from very slow oscillations to the finite frequency associated to oscillations in the deterministic system as we enter the oscillating regime. 

\begin{figure}[h]
\begin{center}
\subfigure[$I_{0}=5$ - voltage]{\includegraphics[width=.3\textwidth]{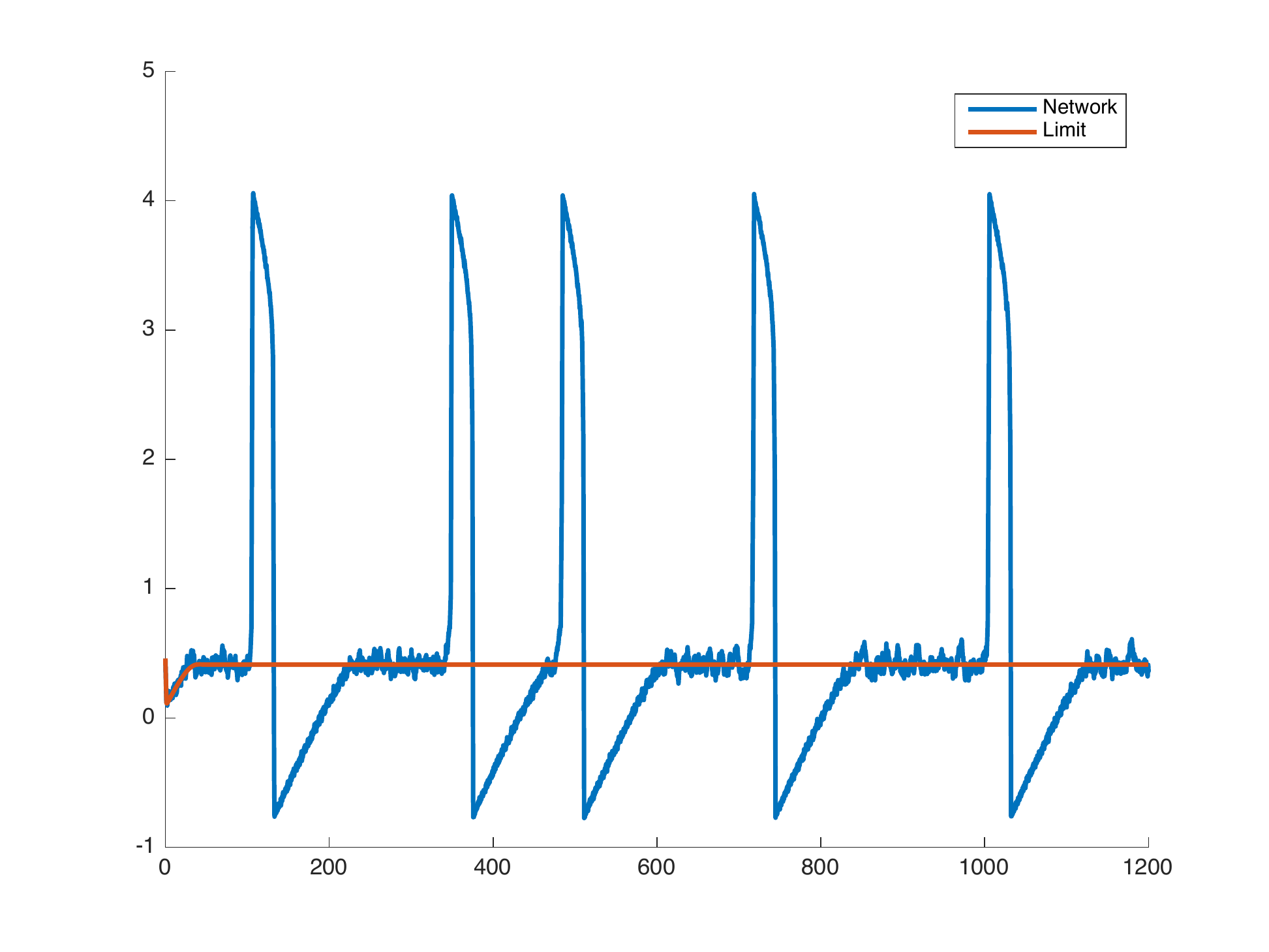}}\quad
\subfigure[$I_{0}=5.4$ - voltage]{\includegraphics[width=.3\textwidth]{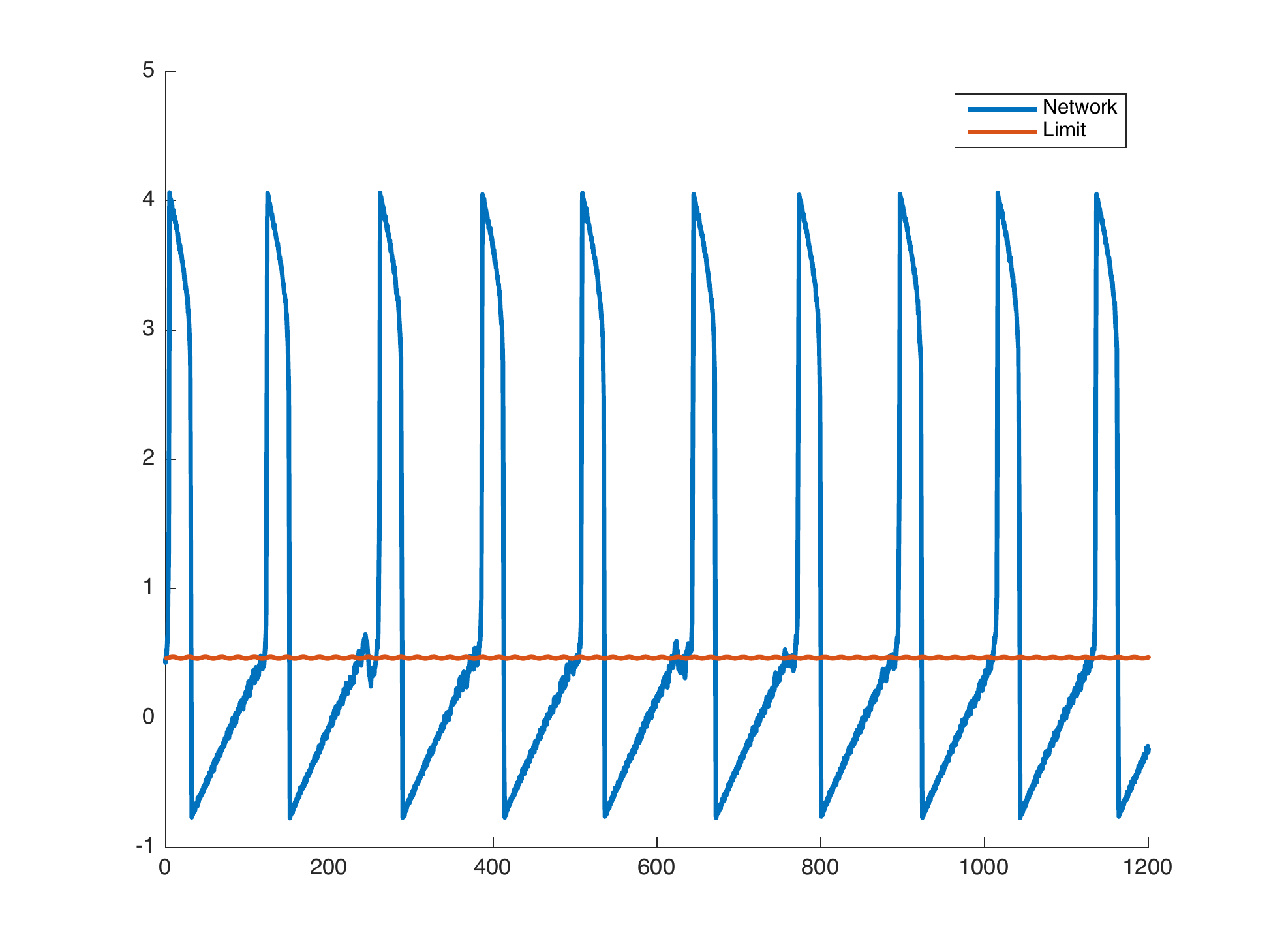}}\quad
\subfigure[$I_{0}=5.7$ - voltage]{\includegraphics[width=.3\textwidth]{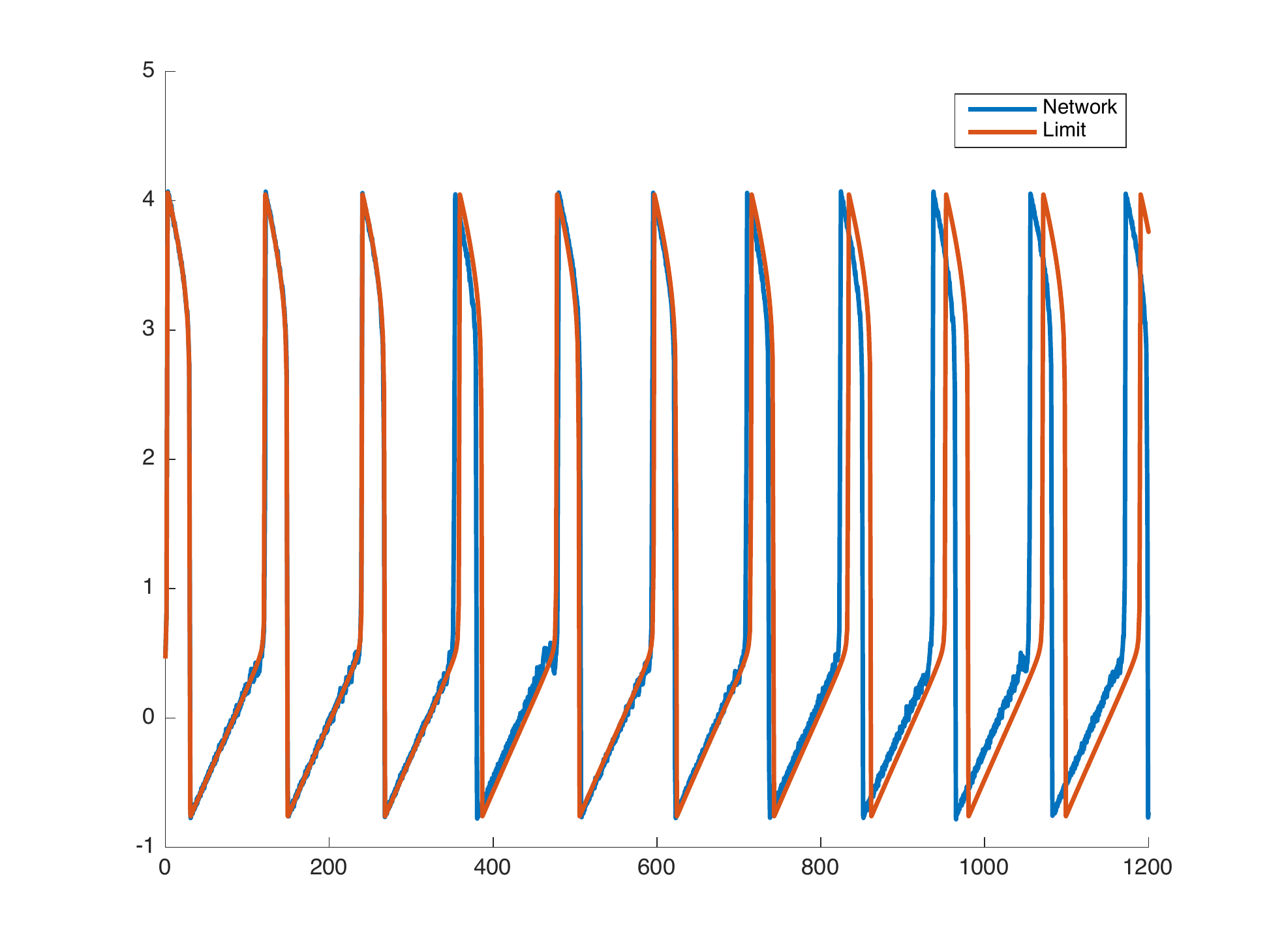}}
\end{center}
\caption{Simulations of the network equations~\eqref{eq:network} through the Hopf bifurcation, with $n=500$, $\eps=0.01$, $\lambda=4$, $a=0.01$ and $b=0.1$, and various values of the input $I_{0}$. Relaxation cycles emerge before the limit equation shows any oscillation, first arising irregularly, and progressively become more regular and locking to the eventually appearing relaxation cycle of the deterministic system.}
\label{fig:transition}
\end{figure}

When the number of neurons is increased and $\eps$ decreased, the range of input values $I_{0}$ for which the behavior of the network deviates from the deterministic limit shrinks, but one can still find, in the vicinity of bifurcations, regimes that significantly deviate from the theoretical limit. In particular, in the regime where the system displays a single attractive fixed point but for parameters close from the Hopf bifurcation, the stochastic network shows quite smooth patterns of activity involving small oscillations around the stable fixed point of the limit, interspersed by large oscillations (spikes). This alternation of small and large oscillations is evocative of mixed-mode oscillations~\cite{desroches2012mixed} observed in particular in multiple timescales dynamics, except that the patterns generated by the network seem irregular in the sense that distinct realizations yields variable alternations of small and large oscillations, as we show in Fig.~\ref{fig:transition2} in a large network with $N=5\,000$ neurons in the vicinity of the Hopf bifurcation. The mathematical study of the behavior of the network in these regimes is not in the scope of this paper. 

\begin{figure}
\begin{center}
\subfigure[Realization 1, before spikes]{\includegraphics[width=.3\textwidth]{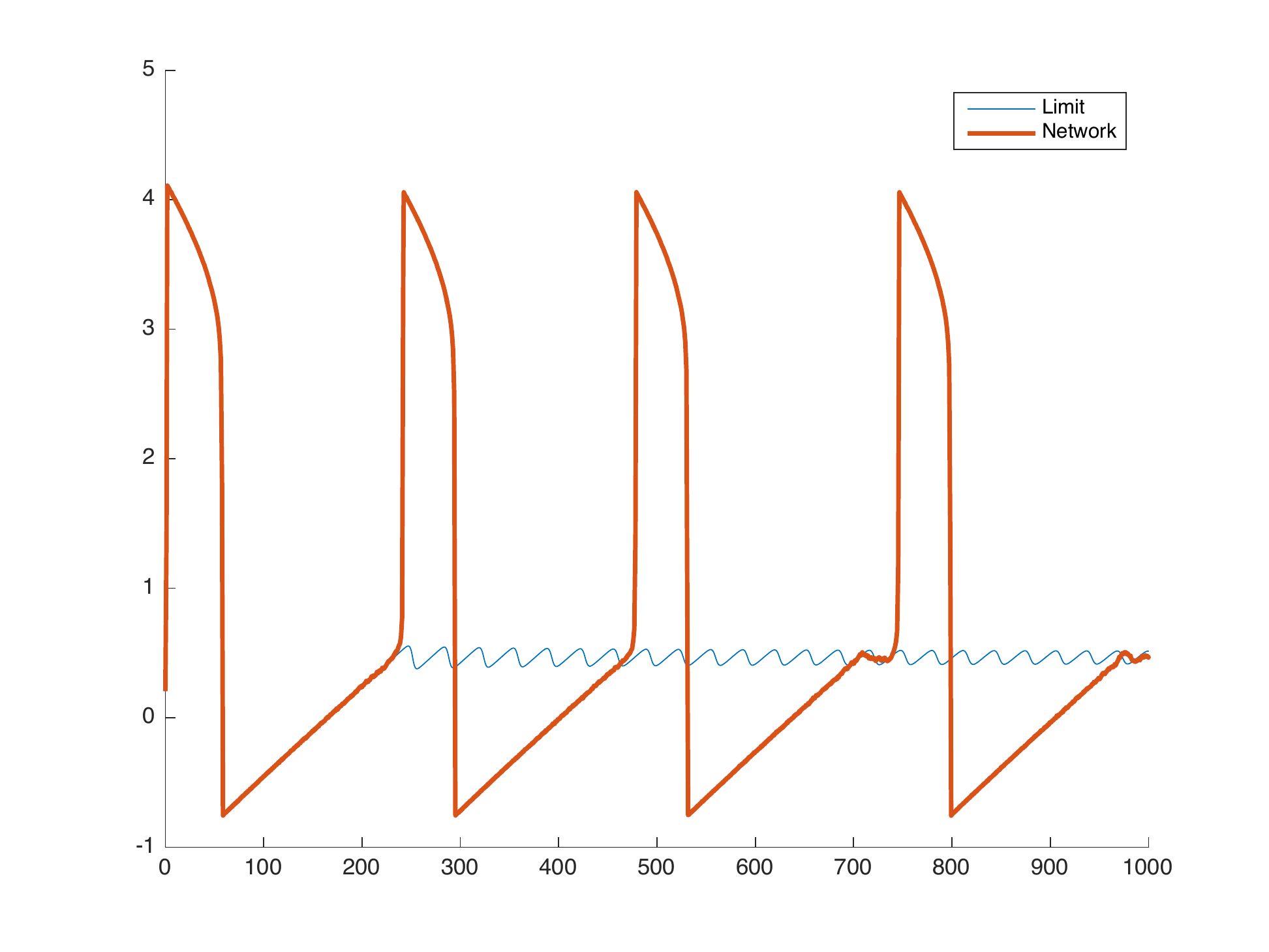}}\quad
\subfigure[Realization 2, before spikes]{\includegraphics[width=.3\textwidth]{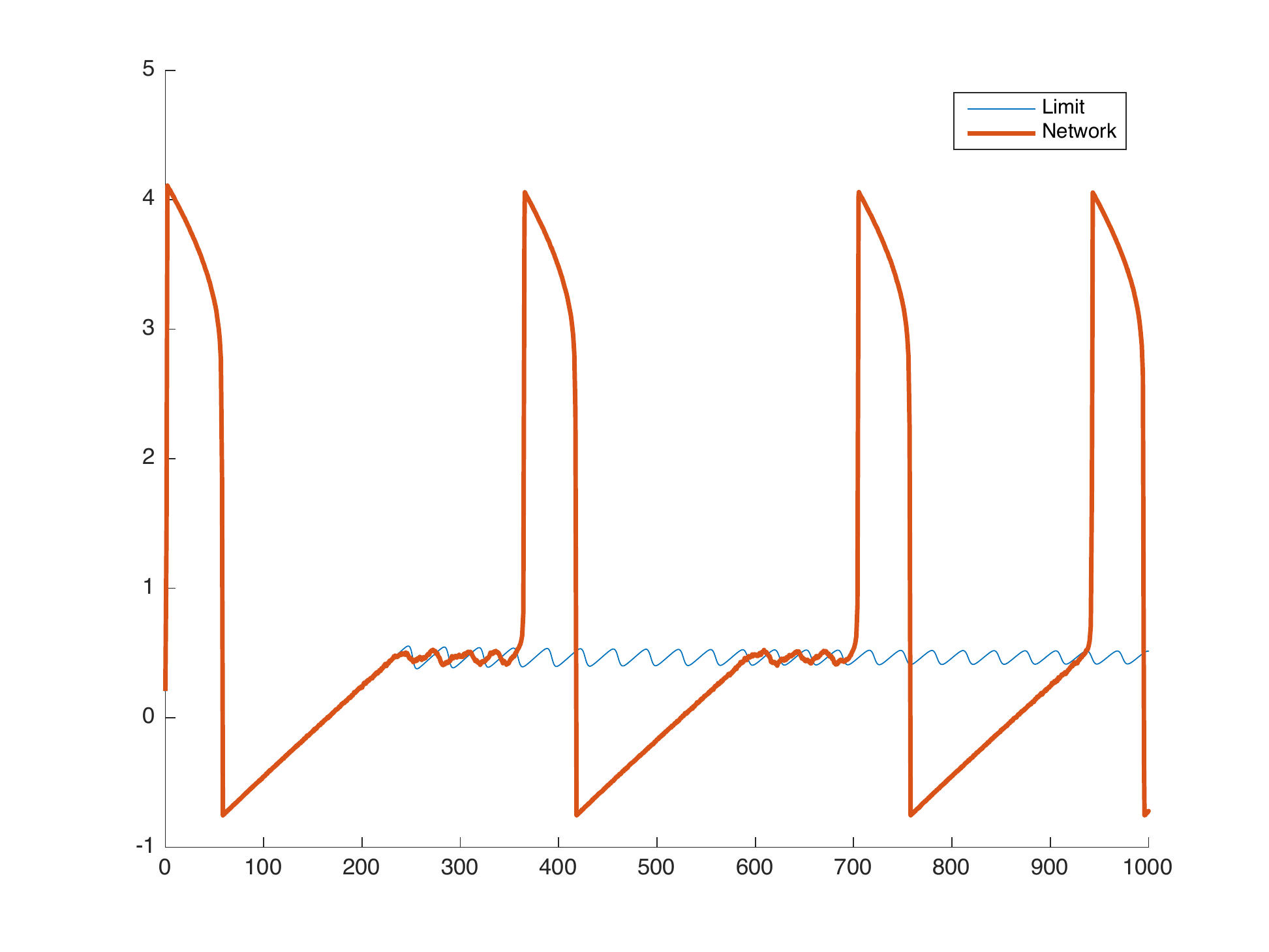}}\quad
\subfigure[Realization 3, before spikes]{\includegraphics[width=.3\textwidth]{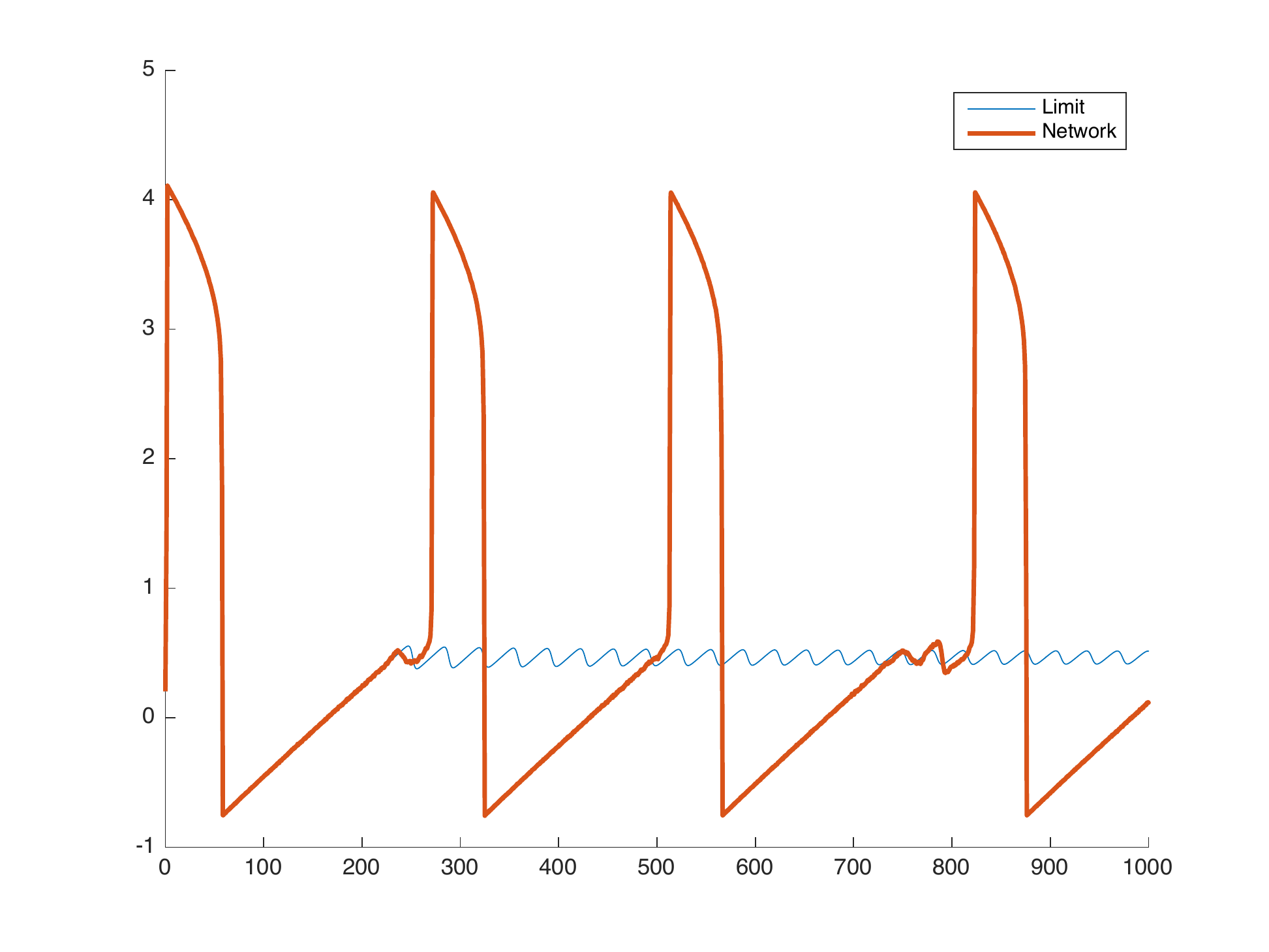}}\\
\subfigure[Realization 1, after spikes]{\includegraphics[width=.3\textwidth]{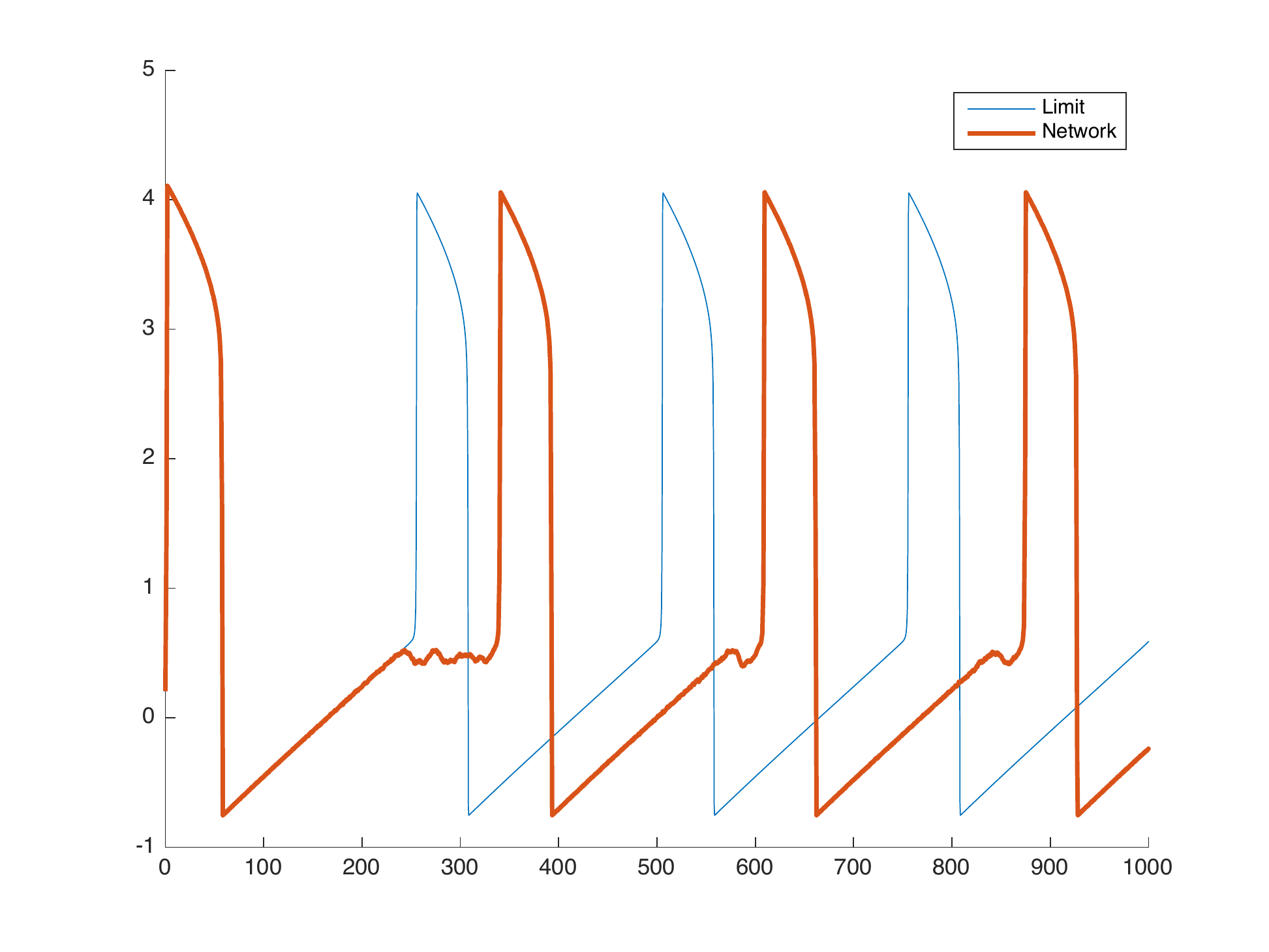}}\quad
\subfigure[Realization 2, after spikes]{\includegraphics[width=.3\textwidth]{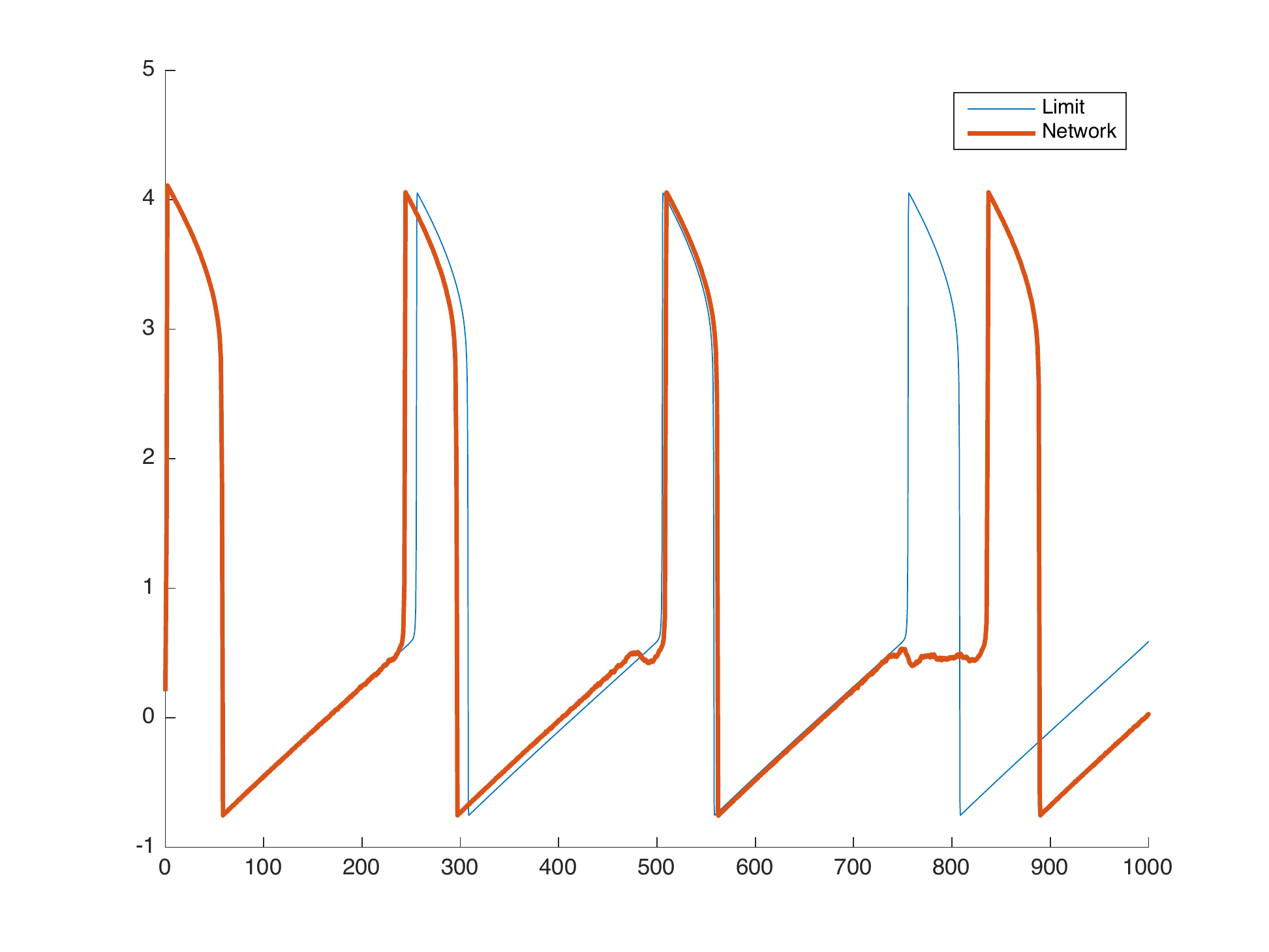}}\quad
\subfigure[Realization 3, after spikes]{\includegraphics[width=.3\textwidth]{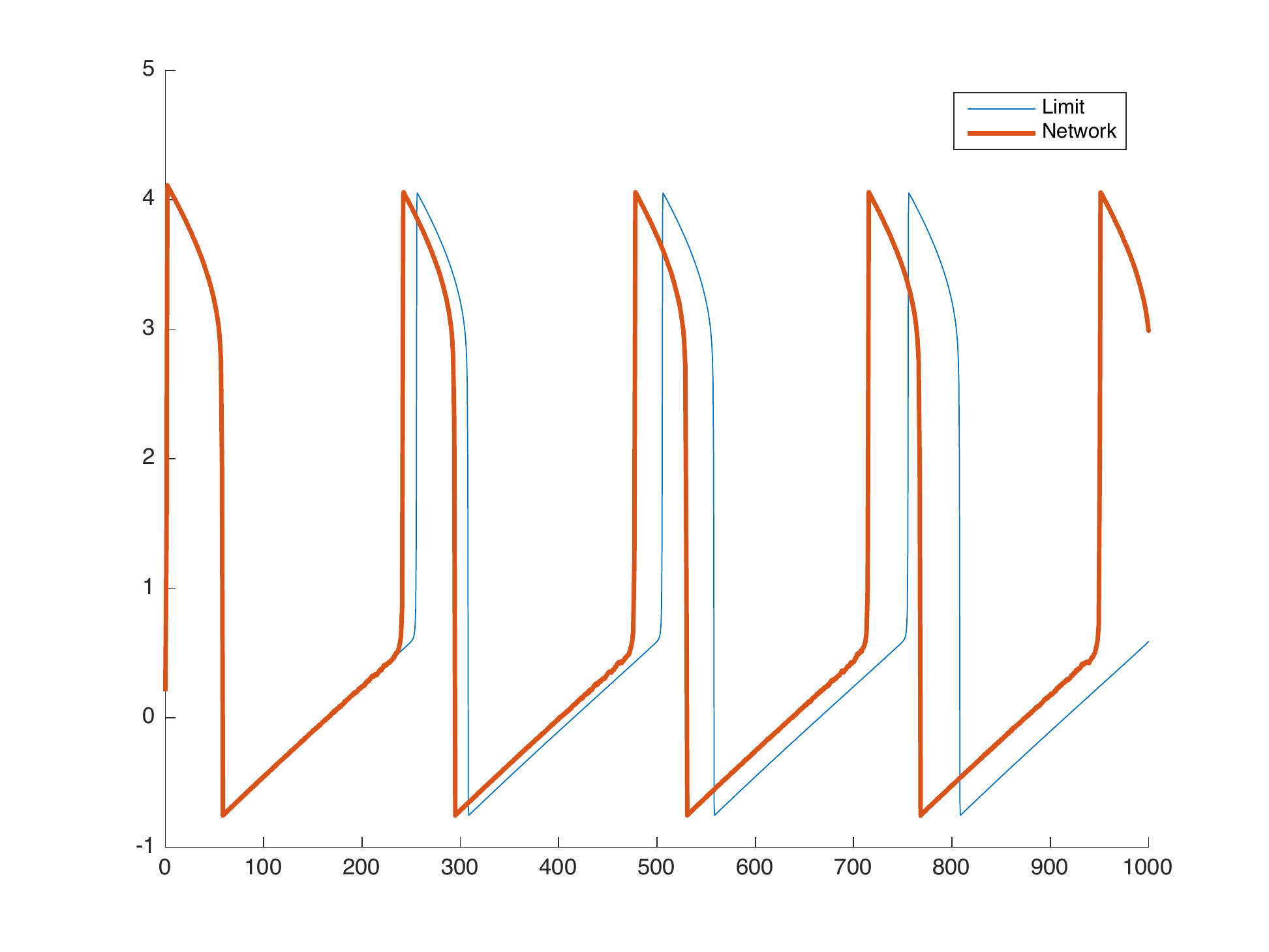}}\\
\end{center}
\caption{Simulations of the network equations~\eqref{eq:network} at the transition from small Hopf oscillations to relaxation oscillations, with $n=5000$, coupling coefficient $\eps^{-1}=220$, $\sigma=0.5$, $\lambda=4$, $a=0.005$ and $b=0.05$, with $I_{0}=5.534$ (top row, before the transition) and $I_{0}=5.5349$ (bottom row, after the transition).}
\label{fig:transition2}
\end{figure}

\section{Discussion}
In this paper, we have pursued the analysis of the mean-field limit of the electrically-coupled FitzHugh-Nagumo system initiated in~\cite{mischler2016kinetic}. That paper focused on the case of small connectivity, and, based on a spectral argument and on the analysis of the uncoupled system, demonstrated that in the limit of vanishing connectivity, there exists a unique stationary solution which is globally attractive. In the present paper, we concentrated on the opposite limit of large connectivity. This limit does not allow us \cris{to compare} the system to the uncoupled linear case, and the nonlinearity of the McKean-Vlasov system becomes prominent. In this limit, the solutions concentrate exponentially around a clamped state in which all neurons have the same voltage, whose magnitude satisfies a simple ordinary differential equation, corresponding to the dynamics of a single neuron. Interestingly, this limit shows complex dynamics and can feature both multiple stable fixed points and periodic orbits. 

In biological terms, the activity regime generally considered correspond to the presence of single fixed point, which can be destabilized in favor a periodic orbit in response to the application of a current. The present theory shows rigorously that large values of electrical coupling (gap junctions) leads to solutions that have periodic laws. In this regime, all neurons oscillate in phase. As reported in the biological literature~\cite{bennett2004electrical,hjorth2009gap}, the present findings supports the idea that gap junctions promote the emergence of synchronization in large-scale networks. Moreover, our developments deal with a system in the presence of large coupling between cells, noise of fixed standard deviation and excitable cells; in that sense, our finding advances the current literature on the topic of junctional synchronization in computational neuroscience, which has essentially focused on cases with small noise or small coupling~\cite{pfeuty2005combined}, or in systems without excitable elements~\cite{ostojic2009synchronization}. 

While the methods used in the present manuscript may be classical in PDEs, to the best of knowledge this paper is the first to introduce such methods in computational neuroscience. The particular nature of the system under consideration required a careful application of existing methods. In particular, the FitzHugh-Nagumo system considered here is characterized by a non globally Lipschitz-continuous drift, and we have developed methods based on truncations of the initial drift. This study has raised several open problems that require deep developments. In particular, the problem we considered here is a double-limit problem: large network size and large connectivity. Here, we have chosen to take the limits in a specific order: first, the large $n$ limit (our starting point, the mean-field FitzHugh-Nagumo equation), and then the large connectivity limit $\eps \to 0$. It remains largely open to understand whether if the two limits commute, and if not, what are the possible dynamical states of the system with large $n$ and small $\eps$. 

In this work, we have shown clamping and synchronization of solutions in the large coupling limit, under relatively mild assumptions on the parameters of the systems. In particular, using a truncation method, we were able to handle non globally-Lipschitz continuous drifts as occurring in the classical FitzHugh-Nagumo equation. However, our results rely on a strong assumption on the concentration of the initial distribution: those were assumed to have tails of leading exponential order $e^{-A(x^{2}+v^{2})/\eps}$. Numerical simulations for fixed initial conditions (not strongly concentrated), we observed two phases in the dynamics: first, a very rapid concentration of the trajectories, and then an onset of the limit dynamics. Heuristically, this rapid concentration may arise because of the dominance of the coupling term in the equation when the voltage variable is away from the network average by an amount of an order of magnitude larger than $\eps$, \johnNew{as outlined formally in the introduction using a change of time, suggesting rapid concentration with a profile $e^{-\gamma t/\eps}$.} Actually, the concentration result shown here may be interpreted as an active process that constrains all voltages to remain within a range of order $\eps$. Understanding these phenomena and short-time behaviors would be an important advance in the understanding of the clamping and synchronization phenomena reported in this paper. 

Besides, we have demonstrated that the limit equation features multiple stable solutions or periodic solutions. An important open problem is to prove that these multiple solutions \cris{persist for }$\eps$ small but non-zero. Our uniform control ensures that for any finite time interval, the solutions closely follow their limit; however, the large time behavior is still to be determined, and in particular the co-existence of multiple stationary solutions, or of periodic solutions is still open. In the case of multiple stationary solutions, methods based on spectral gaps of linearized systems as used in~\cite{mischler2016kinetic} are tempting, but their use becomes much more complex because of the prominence of the nonlinear term in the limit. In the case of periodic solutions, the question is complex, and solutions may rely on characterizing invariant hyperbolic manifolds as in the case of active rotators~\cite{giacomin2012transitions}. \john{Recent work~\cite{lucon2018} extended those methods to a similar FhN system with coupling and noise on the adaptation variable, assuming and exploiting a timescales decomposition between voltage dynamics and mean-field interaction. In this work, McKean-Vlasov systems of excitable systems are shown to display oscillatory behaviors induced by noise and interaction, when coupling is a slow dynamics compared to the intrinsic excitable activity, and coupling occurs in both coordinates. These methods could be instrumental in going beyond the large coupling case and delineating in this system the regions of oscillations in parameter regimes where the underlying FitzHugh-Nagumo system in not in the oscillatory regime. Determining in our system and in the regime of parameters considered the existence of periodic solutions and their stability is an important perspective of this work. Extensions to spatially-extended systems also constitute an interesting perspective of this work. Techniques to address these dynamics were developed in Lipschitz-continuous systems~\cite{touboul2014propagation,touboul2014spatially}, and, very recently, new methods from the domain of PDEs were proposed to handle similar systems with concentrated interactions~\cite{faye:2018}.} Combining these methods and models to Hopf-Cole transforms and large coupling limits could provide a way to simplify the system in large coupling regimes and address the presence of spatio-temporal patterns of dynamics. Eventually, we have seen that near bifurcations of the limit system, complex dynamical solutions emerge. The study of the behavior of the system near those bifurcation also raises important and complex theoretical questions still unsolved. 

\bibliographystyle{siam}

\begin{thebibliography}{10}

\bibitem{baladron2011mean}
{\sc Javier Baladron, Diego Fasoli, Olivier Faugeras, and Jonathan Touboul},
  {\em Mean field description of and propagation of chaos in recurrent
  multipopulation networks of hodgkin-huxley and fitzhugh-nagumo neurons},
  Journal of Mathematical Neuroscience 2, 10 (2012).

\bibitem{barles1991weak}
{\sc G~Barles}, {\em A weak bernstein method for fully nonlinear elliptic
  equations}, Differential and Integral Equations, 4 (1991), pp.~241--262.

\bibitem{barles1994solutions}
{\sc Guy Barles}, {\em Solutions de viscosit{\'e} des {\'e}quations de
  Hamilton-Jacobi}, no.~17 in Math{\'e}matiques et Applications,
  {S}pringer-{V}erlag (Berlin), 1994.

\bibitem{barles2002geometrical}
{\sc Guy Barles, Samuel Biton, and Olivier Ley}, {\em A geometrical approach to
  the study of unbounded solutions of quasilinear parabolic equations}, Archive
  for rational mechanics and analysis, 162 (2002), pp.~287--325.

\bibitem{barles2009concentration}
{\sc Guy Barles, Sepideh Mirrahimi, and Beno{\^\i}t Perthame}, {\em
  Concentration in lotka-volterra parabolic or integral equations: a general
  convergence result}, Methods and Applications of Analysis, 16 (2009),
  pp.~321--340.

\bibitem{barles2007concentrations}
{\sc Guy Barles and Beno{\i}t Perthame}, {\em Concentrations and constrained
  hamilton-jacobi equations arising in adaptive dynamics}, Contemporary
  Mathematics, 439 (2007), pp.~57--68.

\bibitem{barles2008dirac}
{\sc G~Barles and B.~Perthame}, {\em Dirac concentrations in lotka-volterra
  parabolic pdes}, Indiana Univ. Math. J., 57 (2008), pp.~3275--3301.

\bibitem{benachour2009sharp}
{\sc Said Benachour, Matania Ben-Artzi, and Philippe Lauren{\c{c}}ot}, {\em
  Sharp decay estimates and vanishing viscosity for diffusive hamilton-jacobi
  equations}, Advances in Differential Equations, 14 (2009), pp.~1--25.

\bibitem{bennett2004electrical}
{\sc Michael~VL Bennett and R~Suzanne Zukin}, {\em Electrical coupling and
  neuronal synchronization in the mammalian brain}, Neuron, 41 (2004),
  pp.~495--511.

\bibitem{BFT:15}
{\sc Mireille Bossy, D.~Talay, and O.~Faugeras}, {\em Clarification and
  complement to ``mean-field description and propagation of chaos in networks
  of hodgkin-huxley and fitzhugh-nagumo neurons''}, Journal of Mathematical
  Neuroscience,  (2016).

\bibitem{buzsaki2004neuronal}
{\sc Gy{\"o}rgy Buzs{\'a}ki and Andreas Draguhn}, {\em Neuronal oscillations in
  cortical networks}, science, 304 (2004), pp.~1926--1929.

\bibitem{champagnat2011evolutionary}
{\sc Nicolas Champagnat and Pierre-Emmanuel Jabin}, {\em The evolutionary limit
  for models of populations interacting competitively via several resources},
  Journal of Differential Equations, 251 (2011), pp.~176--195.

\bibitem{crandall1992user}
{\sc Michael~G Crandall, Hitoshi Ishii, and Pierre-Louis Lions}, {\em User's
  guide to viscosity solutions of second order partial differential equations},
  Bulletin of the American Mathematical Society, 27 (1992), pp.~1--67.

\bibitem{faye:2018}
{\sc J~Crevat, Gr{\'e}gory Faye, and F~Filbet}, {\em Rigorous derivation of the
  nonlocal reaction-diffusion fitzhugh-nagumo system}, arXiv:1804.01263,
  (2018).

\bibitem{desroches2012mixed}
{\sc Mathieu Desroches, John Guckenheimer, Bernd Krauskopf, Christian Kuehn,
  Hinke~M Osinga, and Martin Wechselberger}, {\em Mixed-mode oscillations with
  multiple time scales}, Siam Review, 54 (2012), pp.~211--288.

\bibitem{diekmann2005dynamics}
{\sc Odo Diekmann, Pierre-Emanuel Jabin, St{\'e}phane Mischler, and Beno{\i}t
  Perthame}, {\em The dynamics of adaptation: an illuminating example and a
  hamilton--jacobi approach}, Theoretical population biology, 67 (2005),
  pp.~257--271.

\bibitem{evans1997partial}
{\sc Lawrence~C Evans}, {\em Partial differential equations}, vol.~19, Graduate
  Studies in Mathematics, American Mathematical Society, 1998.

\bibitem{filbet}
{\sc Francis Filbet and Thomas Rey}, {\em A hierarchy of hybrid numerical
  methods for multiscale kinetic equations}, SIAM Journal on Scientific
  Computing, 37 (2015), pp.~A1218--A1247.

\bibitem{fitzhugh1955}
{\sc R~FitzHugh}, {\em Mathematical models of threshold phenomena in the nerve
  membrane}, Bulletin of Mathematical Biology, 17 (1955), pp.~257--278 
  0092--8240.

\bibitem{FredilinWentzell}
{\sc Mark~Iosifovich Freidlin and Alexander~D. Wentzell}, {\em Random
  perturbations of dynamical systems}, Springer, New York, NY, 1998.

\bibitem{giacomin2012transitions}
{\sc Giambattista Giacomin, Khashayar Pakdaman, Xavier Pellegrin, and
  Christophe Poquet}, {\em Transitions in active rotator systems: invariant
  hyperbolic manifold approach}, SIAM Journal on Mathematical Analysis, 44
  (2012), pp.~4165--4194.

\bibitem{herrmann2012self}
{\sc Samuel Herrmann and Julian Tugaut}, {\em Self-stabilizing processes:
  uniqueness problem for stationary measures and convergence rate in the
  small-noise limit}, ESAIM: Probability and Statistics, 16 (2012),
  pp.~277--305.

\bibitem{hjorth2009gap}
{\sc Johannes Hjorth, Kim~T Blackwell, and Jeanette~Hellgren Kotaleski}, {\em
  Gap junctions between striatal fast-spiking interneurons regulate spiking
  activity and synchronization as a function of cortical activity}, Journal of
  Neuroscience, 29 (2009), pp.~5276--5286.

\bibitem{hodgkin-huxley:52}
{\sc A.L. Hodgkin and A.F. Huxley}, {\em A quantitative description of membrane
  current and its application to conduction and excitation in nerve.}, Journal
  of Physiology, 117 (1952), pp.~500--544.

\bibitem{leman2014influence}
{\sc H{\'e}lene Leman, Sylvie Meleard, and Sepideh Mirrahimi}, {\em Influence
  of a spatial structure on the long time behavior of a competitive
  lotka-volterra type system}, arXiv preprint arXiv:1401.1182,  (2014).

\bibitem{lucon2018}
{\sc Eric Lu{\c{c}}on and Christophe Poquet}, {\em Emergence of oscillatory
  behaviors for excitable systems with noise and mean-field interaction, a
  slow-fast dynamics approach}, arXiv:1802.06410,  (2018).

\bibitem{mirrahimi:12}
{\sc Sepideh Mirrahimi, Beno{\^\i}t Perthame, and Joe~Yuichiro Wakano}, {\em
  Evolution of species trait through resource competition}, Journal of
  mathematical biology, 64 (2012), pp.~1189--1223.

\bibitem{mischler2016kinetic}
{\sc St{\'e}phane Mischler, Crist{\'o}bal Quininao, and Jonathan Touboul}, {\em
  On a kinetic fitzhugh--nagumo model of neuronal network}, Communications in
  Mathematical Physics, 342 (2016), pp.~1001--1042.

\bibitem{nagumo1962active}
{\sc Jinichi Nagumo, S~Arimoto, and S~Yoshizawa}, {\em An active pulse
  transmission line simulating nerve axon}, Proceedings of the IRE, 50 (1962),
  pp.~2061--2070.

\bibitem{ostojic2009synchronization}
{\sc Srdjan Ostojic, Nicolas Brunel, and Vincent Hakim}, {\em Synchronization
  properties of networks of electrically coupled neurons in the presence of
  noise and heterogeneities}, Journal of computational neuroscience, 26 (2009),
  p.~369.

\bibitem{pfeuty2005combined}
{\sc Benjamin Pfeuty, Germ{\'a}n Mato, David Golomb, and David Hansel}, {\em
  The combined effects of inhibitory and electrical synapses in synchrony},
  Neural Computation, 17 (2005), pp.~633--670.

\bibitem{scheutzow1985noise}
{\sc Michael Scheutzow}, {\em Noise can create periodic behavior and stabilize
  nonlinear diffusions}, Stochastic processes and their applications, 20
  (1985), pp.~323--331.

\bibitem{scheutzow1985some}
\leavevmode\vrule height 2pt depth -1.6pt width 23pt, {\em Some examples of
  nonlinear diffusion processes having a time-periodic law}, The Annals of
  Probability,  (1985), pp.~379--384.

\bibitem{scheutzow1986periodic}
\leavevmode\vrule height 2pt depth -1.6pt width 23pt, {\em Periodic behavior of
  the stochastic brusselator in the mean-field limit}, Probability theory and
  related fields, 72 (1986), pp.~425--462.

\bibitem{scheutzow1988periods}
\leavevmode\vrule height 2pt depth -1.6pt width 23pt, {\em Periods of
  nonexpansive operators on finite l1-spaces}, European Journal of
  Combinatorics, 9 (1988), pp.~73--81.

\bibitem{scheutzow1988stationary}
\leavevmode\vrule height 2pt depth -1.6pt width 23pt, {\em Stationary and
  periodic stochastic differential systems: A study of qualitative changes with
  respect to the noise level and asymptotics}, Habiltationsschrift, Univ.
  Kaiserslautern, Germany,  (1988).

\bibitem{scheutzow1993stabilization}
\leavevmode\vrule height 2pt depth -1.6pt width 23pt, {\em Stabilization and
  destabilization by noise in the plane}, Stochastic Analysis and Applications,
  11 (1993), pp.~97--113.

\bibitem{touboul2014spatially}
{\sc Jonathan Touboul}, {\em Spatially extended networks with singular
  multi-scale connectivity patterns}, Journal of Statistical Physics, 156
  (2014), pp.~546--573.

\bibitem{touboul2014propagation}
{\sc Jonathan Touboul et~al.}, {\em Propagation of chaos in neural fields}, The
  Annals of Applied Probability, 24 (2014), pp.~1298--1328.

\bibitem{tugaut2013convergence}
{\sc Julian Tugaut}, {\em Convergence to the equilibria for self-stabilizing
  processes in double-well landscape}, The Annals of Probability, 41 (2013),
  pp.~1427--1460.

\bibitem{tugaut2013self}
\leavevmode\vrule height 2pt depth -1.6pt width 23pt, {\em Self-stabilizing
  processes in multi-wells landscape in rd-convergence}, Stochastic Processes
  and their Applications, 123 (2013), pp.~1780--1801.

\bibitem{tugaut2014self}
\leavevmode\vrule height 2pt depth -1.6pt width 23pt, {\em Self-stabilizing
  processes in multi-wells landscape in d-invariant probabilities}, Journal of
  Theoretical Probability, 27 (2014), pp.~57--79.

\bibitem{wang2010neurophysiological}
{\sc Xiao-Jing Wang}, {\em Neurophysiological and computational principles of
  cortical rhythms in cognition}, Physiological reviews, 90 (2010),
  pp.~1195--1268.

\bibitem{zaks2003noise}
{\sc MA~Zaks, AB~Neiman, S~Feistel, and L~Schimansky-Geier}, {\em
  Noise-controlled oscillations and their bifurcations in coupled phase
  oscillators}, Physical Review E, 68 (2003), p.~066206.

\bibitem{zaks2005noise}
{\sc MA~Zaks, X~Sailer, L~Schimansky-Geier, and AB~Neiman}, {\em Noise induced
  complexity: From subthreshold oscillations to spiking in coupled excitable
  systems}, Chaos: An Interdisciplinary Journal of Nonlinear Science, 15
  (2005), p.~026117.

\end{thebibliography}

\end{document}